\documentclass[a4paper]{amsart}
\usepackage[utf8]{inputenc}
\usepackage[T1]{fontenc}

\usepackage{etoolbox}

\usepackage{mathtools}
\usepackage{amsmath, amssymb, amsthm}
\usepackage{bm}

\usepackage{enumitem}

\usepackage[hidelinks]{hyperref}
\usepackage[capitalise]{cleveref}
\crefname{equation}{}{}
\Crefname{equation}{}{}

\numberwithin{environment}{section}
\numberwithin{equation}{section}

\newcommand{\NewTheoremEnv}[2]{%
  \newtheorem{#1}[environment]{#2}%
  \AtBeginEnvironment{#1}{\crefalias{environment}{#1}}%
}

\theoremstyle{plain}
\NewTheoremEnv{theorem}{Theorem}
\NewTheoremEnv{proposition}{Proposition}
\NewTheoremEnv{lemma}{Lemma}
\NewTheoremEnv{corollary}{Corollary}

\theoremstyle{definition}
\NewTheoremEnv{definition}{Definition}
\NewTheoremEnv{remark}{Remark}
\NewTheoremEnv{construction}{Construction}

\let\cal\mathcal
\AtBeginDocument{\def\vec#1{\boldsymbol{#1}}}
\newcommand{\norm}[1]{\lVert #1 \rVert}

\newcommand{\ZZ}{\mathbb{Z}}
\newcommand{\QQ}{\mathbb{Q}}
\newcommand{\RR}{\mathbb{R}}
\newcommand{\FF}{\mathbb{F}}
\newcommand{\PP}{\mathbb{P}}

\newcommand{\Dc}{\mathcal{D}}
\newcommand{\Ic}{\mathcal{I}}
\newcommand{\Pc}{\mathcal{P}}
\newcommand{\Yc}{\mathcal{Y}}

\DeclareMathOperator{\id}{id}
\DeclareMathOperator{\Br}{Br}
\DeclareMathOperator{\Cone}{Cone}
\DeclareMathOperator{\Pic}{Pic}
\DeclareMathOperator{\vol}{vol}
\DeclareMathOperator*{\argmax}{arg\,max}

\title[Everywhere locally soluble varieties over toric bases]{On everywhere locally soluble varieties \\ in families over toric bases}

\author[V. Mitankin]{Vladi Mitankin}
\address{Institute of Mathematics and Informatics, Bulgarian Academy of Sciences, Acad.\ G.\ Bonchev St., bl.\ 8, 1113 Sofia, Bulgaria}
\email{\href{mailto:v.mitankin@math.bas.bg}{v.mitankin@math.bas.bg}}

\author[L. Schäfer]{Leo Schäfer}
\address{Mathematisches Institut, Georg-August-Universität Göttingen, Bunsenstr.\ 3--5, 37073 Göttingen, Germany}
\email{\href{mailto:leo.schaefer@mathematik.uni-goettingen.de}{leo.schaefer@mathematik.uni-goettingen.de}}

\author[D. Schindler]{Damaris Schindler}
\email{\href{mailto:damaris.schindler@mathematik.uni-goettingen.de}{damaris.schindler@mathematik.uni-goettingen.de}}

\author[F. Wilsch]{Florian Wilsch}
\email{\href{mailto:florian.wilsch@mathematik.uni-goettingen.de}{florian.wilsch@mathematik.uni-goettingen.de}}

\dedicatory{To Roger Heath-Brown on the occasion of his 75th birthday}

\begin{document}

\begin{abstract}
   We determine the density of locally soluble fibres in certain families of varieties over a toric base. For this we develop a new form of the hyperbola method for toric bases which allows for counting functions with a growth behaviour of the form $B^a(\log B)^{-\Delta}$.
\end{abstract}

\subjclass[2020]{11D45 (11P21, 14D10, 14G05)}


\maketitle

\section{Introduction}

It is in general a very deep question to understand if a variety $X$ over the field of rational numbers $\mathbb{Q}$ has a rational point. Addressing that question for families of varieties has recently become an increasingly attractive topic of research. In families where the Hasse principle holds, members with a rational point are precisely those which are everywhere locally soluble. The study of varieties which are everywhere locally soluble has received a lot of attention on its own, as local solubility is typically much easier to understand. Once a suitable height function is introduced, it becomes natural to ask what the proportion of everywhere locally soluble varieties in specific families is.

The last question has been investigated for various families of conics by Hooley \cite{Hooley93}, Guo \cite{Guo}, Gamburd, Ghosh, Sarnak and Whang \cite{GGSW26}, Loughran \cite{Lou18}, Loughran, Rome and Sofos \cite{loughran2024leadingconstantrationalpoints}, Da Silva \cite{daSilva}, more general conic bundles by Destagnol, Lyczak and Sofos \cite{DLS25}, Fermat equations by Koymans, Paterson, Santens and Shute \cite{KPSS25}, more general families of quadrics by Wilson \cite{Wilson} and Bhargava, Cremona, Fisher, Jones and Keating \cite{BCFJK16}, systems of conics by Chan, Koymans and Rome \cite{CKR25}, Sofos \cite{Sof16}, del Pezzo surfaces by Mitankin and Salgado \cite{MS22}, hypersurfaces in products of projective spaces by Fisher, Ho and Park \cite{FHP21}, etc. For more references see the list provided in \cite{loughran2024leadingconstantrationalpoints}. In many cases, classical methods from analytic number theory are used, though in recent work of Gamburd, Ghosh, Sarnak and Whang \cite{GGSW26} methods from ergodic theory are applied to study the density of locally soluble conics.\par
When studying all these examples, a central leading question is in what way the density of locally soluble members in a certain family depends on geometric properties of the family. Loughran and Smeets \cite[Conj.~1.6]{LS16} set up a first conjecture in this direction for families of varieties which can be understood as fibres of a suitably nice dominant morphism $\pi\colon Y\rightarrow \mathbb{P}^n$. It is a very natural question to ask for more general base varieties than $\mathbb{P}^n$, assuming that the base variety has a sufficiently dense set of rational points. Among other questions, this has been taken up in \cite{loughran2024leadingconstantrationalpoints}. Consider a dominant morphism of smooth projective varieties $\pi\colon Y\rightarrow X$ over a number field $k$ with geometrically integral generic fibre. For an anticanonical height function $H\colon X(k)\rightarrow \mathbb{R}_{\geq 0}$ and an (exceptional) set $\Omega\subset X(k)$ one may define the counting function
\begin{align*}
 N_{\pi,\Omega}(B) = \sharp\{x\in X(k)\setminus\Omega: H(x)\leq B, x\in \pi(Y(\mathbb{A}_k))\},
\end{align*}
which counts the number of everywhere locally soluble fibres up to height $B$ outside of $\Omega$. Here we write $Y(\mathbb{A}_k)$ for the set of adelic points of $Y$ and $\Omega$ is typically taken to be a thin set. Loughran, Rome and Sofos~\cite{loughran2024leadingconstantrationalpoints} make several additional assumptions on the morphism $\pi$: they assume that it admits a smooth fibre over a rational point that is everywhere locally soluble and that the fibre over every codimension $1$ point of $X$ contains an irreducible component of multiplicity $1$.
Moreover, they assume that $X$ is weak Fano and that $\Br X = \ker (\Br X\rightarrow \Br X_{\bar k})$. Write $U\subset X$ for the open subset of $X$ arising by removing the closure of all codimension $1$ points of $X$ which lie below a non-split fibre, and assume that the rank $\rho(X)$ of the Picard group of $X$ satisfies $\rho(X)=1$ or that $k[U]^{\times}=k^\times$. They then conjecture that there exists a thin subset $\Omega \subset X(k)$ such that
\begin{align*}
N_{\pi,\Omega}(B) \sim c_\pi B (\log B)^{\rho(X)-\Delta(\pi)-1},
\end{align*}
where $\Delta(\pi)$ can be expressed as a (finite) sum over codimension $1$ points of $X$. Loughran, Rome and Sofos also give a precise prediction for the constant $c_\pi$ in analogy with Peyre's prediction of the leading constant in Manin's conjecture.

In this article we begin to explore this question, starting with the case of a smooth split toric variety $X$ over $\mathbb{Q}$ as a base variety. We obtain new families for which the asymptotic growth as predicted in the Loughran--Rome--Sofos conjecture holds. The assumption $\rho(X)=1$ or $k[U]^\times = k^{\times}$ is very restrictive. Relaxing that condition creates an interesting direction to explore the counting function $N_{\pi,\Omega}(B)$, and the class of families that we are treating includes ones that satisfy as well as ones that violate this assumption, though the conjecture holds for all of them.

We now start introducing the necessary notation so that we can state our main theorem precisely. In \cref{section:examples} we will give a number of concrete applications.

\subsection{Local solubility in families over toric bases}
Let $X$ be a smooth, $n$-dimensional, split toric variety over $\QQ$ of Picard rank $\rho\ge 2$, associated with a fan $\Sigma$. Let $\Dc\subset \Pic X$ be the set of classes of torus-invariant prime divisors $D$. As in the application~\cite{MR4959852} of the hyperbola method to subvarieties of toric varieties, we shall group the Cox ring generators by degree: for each class $d \in \Dc$, let $a_d$ be the number of invariant prime divisors with class $d$ in order to enumerate them $D_{d,1},\dots,D_{d,a_d}$. To each such class corresponds a ray $\rho_{d,j}$ of $\Sigma$ and a generator $x_{d,j}$ of the total coordinate ring
\begin{equation*}
	R = \QQ[x_{d,j} : d\in \Dc,\ 1\le j\le a_d].
\end{equation*}

\begin{construction}\label{constr:family-over-toric-base}
Let $\Ic \subseteq \Dc$ be a subset, and for each $i\in \Ic$, let $f_i$ be a homogeneous form in a set of variables $\Yc_i = \{y_{i,1},\dots, y_{i,m_i}\}$ with coefficients in $\QQ x_{i,1}\oplus \dots \oplus\QQ x_{i,a_i}$.
In particular, $f_i$ defines both a family
\begin{equation*}
	\pi_i\colon Y^{(i)}\subset L_i^{\oplus m_i}\to X,
\end{equation*}
where $L_i$ is a line bundle of class $i$,
and, more elementarily, a family
\begin{equation*}
	\psi_i\colon \PP^{m_i-1}\times \mathbb{A}^{a_i}  \supset V(f_i) \to \mathbb{A}^{a_i}
\end{equation*}
over affine space.
Assume that the number
\begin{equation*}
	N_i(B) = \sharp\{\vec x \in \mathbb{A}^{a_i}(\ZZ) : \psi_i^{-1}(\vec x)(\mathbb{A}_\QQ)\ne \emptyset,\ \Vert \vec x \Vert_\infty \le B,\ x_j \ne 0 \text{ for } 1\le j\le a_i\} 
\end{equation*}
of everywhere locally soluble fibres of the latter family satisfies
\begin{equation}\label{eq:affine-asymp}
	N_i(B) = c_i B^{a_i} (1+\log B)^{-\Delta_i} + O(B^{a_i}(1+\log B)^{-\Delta_i-\delta})
\end{equation}
with $\Delta_i\ge 0$, $c_i>0$, and $\delta>0$ for $B\geq 1$. Without loss of generality, we may assume $\delta<1/2$. We set $\Delta_d=0$ and $c_d=2^{a_d}$ for $d\in \Dc\setminus \Ic$ and, as there are no conditions on those variables, get an affine counting function
\begin{equation*}
	N_d(B) = \sharp\{\vec x \in \mathbb{A}^{a_d}(\ZZ) : \Vert \vec x \Vert_\infty \le B \} = c_d B^{a_d}(1+\log B)^{-\Delta_d} (1 +O(B^{-1}));
\end{equation*}
this can be thought of as being associated with the identity morphisms $\pi_d=\id_X$ and $\psi_d = \id_{\mathbb{A}^{a_d}}$.

Let
\begin{equation*}
	\pi\colon Y = \sideset{}{_X}\prod_{i\in \Ic} Y^{(i)} \to X
\end{equation*}
be their fibre product.
Our aim is to estimate the number $N(B)$ of everywhere locally soluble fibres over the open orbit in $X$.
\end{construction}

Our proof strategy consists in a combination of a universal torsor method and a hyperbola method. We now describe the resulting counting problem in terms of the universal torsor. The total coordinate space of $X$ is the complement $\mathbb{A}^{n+\rho}\setminus V(I)$ of the vanishing locus of the \emph{irrelevant ideal}
\begin{equation*}
	I =
	\prod_{\sigma\in \Sigma^{\max}}
	(x_{d,j} : \rho_{d,j}\not\subseteq \sigma) \subseteq R,
\end{equation*}
which induces a coprimality condition
\begin{equation}\label{eq:coprimality-toric}
	\gcd(x_{d,j} : \rho_{d,j}\not\subset\sigma) =1 \text{ for all } \sigma\in \Sigma^{\max}
\end{equation}
that appears in the description of the set
\begin{equation*}
	Y(\ZZ) = \{\vec x \in \ZZ^{n+\rho} : \text{\eqref{eq:coprimality-toric} holds}\}
\end{equation*}
of integral points on a universal torsor, parametrizing rational points on $X$ through a $2^\rho$-to-$1$ correspondence.
For every point $\vec x$ on this coordinate space and $i\in \Ic$, write $\vec x_i$ for the entries of $\vec x$ corresponding to $i$. Moreover, if
\begin{equation*}
	D = \sum_{\substack{d\in \Dc,\\1\le j\le a_d}} k_{d,j} D_{d,j}
\end{equation*}
is a torus-invariant effective divisor, write
\begin{equation*}
	\vec x^D = \prod_{\substack{d\in \Dc,\\1\le j\le a_d}} x_{d,j}^{k_{d,j}}.
\end{equation*}

We shall count points ordered by anticanonical height. In order to describe it, let, for every line bundle $L$ and maximal cone $\sigma\in\Sigma^{\max}$, $L(\sigma)$ denote the unique divisor that is linearly equivalent to $L$ and supported only on the divisors not corresponding to any ray in $\sigma$. In this notation,
\begin{equation*}
	H(\vec x) = \max_{\sigma\in \Sigma^{\max}} |\vec x^{-K(\sigma)}|,
\end{equation*}
where $-K = \sum_{d\in \Dc, 1\le j\le a_d} D_{d,j}$ is anticanonical. As we show in \cref{lem:factorization_H_theta}, the height $H\colon \ZZ^{n+\rho}\to\RR_{>0}$ factors through a height function $H'\colon \ZZ^{\Dc} \to \RR_{>0}$ with $H(\vec x)=H'(\norm{x_d}_\infty)_{d\in \Dc}$. Assume that $H'$ is given by $k$ monomials, that is, there are vectors $\vec\varpi_1,\dots,\vec\varpi_k\in \ZZ_{\ge 0}^\Dc$ such that
\begin{equation}\label{eq:package-height}
	H'(\vec b) = \max_{1\le j\le k}\vec b^{\vec\varpi_j} = \max_{1\le j\le k} \prod_{d\in \Dc} b_d^{\varpi_{j,d}}.
\end{equation}
From the data $\vec\varpi_1,\dots,\vec\varpi_k\in \ZZ_{\ge 0}^\Dc$, $(a_d)_{d\in \Dc}$ we obtain the polytope

\begin{equation*}
		\cal P
		= \Bigl\{\,\vec t \in \mathbb R_{\geq 0}^{|\Dc|} ~:~ \sum_{j\in \Dc} \frac{\varpi_{ij}}{a_j} t_j \leq 1 ~~\text{for}~~ i = 1, \dots, k\,\Bigr\}.
	\end{equation*}
Let $\cal F$ be the subset of $\cal P$ on which the function $|\vec t|_1$ is maximal, that is,
$\cal F= \argmax \limits_{\vec t \in \cal P} |\vec t|_1$. 
We say that $-\vec\Delta=(-\Delta_d)_{d\in\Dc}$ is \emph{of convergent type} if the integral 
\begin{equation}\label{eqn:convergenttypeF}
\int_{\cal F} \prod_{d\in\Dc} t_d^{-\Delta_d} d \mu
\end{equation}
is convergent, where $d\mu$ stands for the Lebesgue measure on the linear subspace spanned by the face $\cal F$. 

For the following theorem, these defining data --- the variety $X$, the equations $f_i$, the constants $c_d$, the implicit constants in~\eqref{eq:affine-asymp}, and the exponent $\delta$ --- are treated as fixed, and all implicit constants may depend on them. We write $\alpha(X)$ for Peyre's $\alpha$-constant and $\omega_\infty = \tau(X(\RR))$ for the real Tamagawa volume and, with this notation at hand, are ready to state our main theorem.

\begin{theorem}\label{main_theorem_toric}
	Let $X$ be a smooth, projective, split toric variety over $\QQ$ of Picard rank $\rho\ge 2$ with open orbit $U$, and let $\pi\colon Y\to X$ be as in \cref{constr:family-over-toric-base}. Write $\Delta = \sum_{d\in \Dc} \Delta_d$ and assume \cref{eq:affine-asymp} for $i\in \Ic$. If $-\vec \Delta$ is of convergent type, then the number 
	\begin{equation*}
		N(B) = \sharp\{x\in U(\QQ) : Y_x(\mathbb{A}_\QQ)\ne \emptyset,\ H(x)\le B\}
	\end{equation*}
	of everywhere locally soluble fibres satisfies
	\begin{equation*}
		N(B) = c B (\log B)^{\rho-1-\Delta} + O_\pi \!\left(B (\log B)^{\rho-1-\Delta-\varepsilon}\right),
	\end{equation*}
	where
	\begin{equation*}
		c= \frac{\alpha(X)}{2^{n+\rho}}  \omega_\infty \prod_p  \left( 1-\frac{1}{p}\right)^{\rho}\frac{|\mathfrak{X}(\FF_p)|}{p^n} \prod_{d\in \Dc} c_d a_d^{\Delta_d},
	\end{equation*}
    and $\varepsilon >0$.
\end{theorem}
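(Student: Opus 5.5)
Our plan is to pass to the universal torsor, reduce local solubility of fibres to independent conditions on each degree block $\vec x_d$, and then run a hyperbola method adapted to weights $B^{a}(\log B)^{-\Delta}$.

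\textbf{Step 1: lifting to the torsor.} Rational points of $U$ correspond $2^\rho$-to-$1$ to points $\vec x\in Y(\ZZ)$ with all coordinates nonzero. The fibre $Y_x$ is the fibre product of the fibres $Y^{(i)}_x$. Over the open orbit, $Y^{(i)}_x$ is isomorphic to $\psi_i^{-1}(\vec x_i)$, up to twisting the line bundle $L_i$, which does not affect solubility of the homogeneous equation $f_i=0$. Hence $Y_x(\mathbb{A}_\QQ)\neq\emptyset$ if and only if $\psi_i^{-1}(\vec x_i)(\mathbb{A}_\QQ)\ne\emptyset$ for every $i\in\Ic$. Moreover, by \cref{lem:factorization_H_theta}, $H(\vec x)=H'((\norm{\vec x_d}_\infty)_d)$. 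Therefore
\[
N(B)=2^{-\rho}\sharp\{\vec x\in Y(\ZZ): x_{d,j}\neq0,\ \vec x_i\in\mathcal S_i\ (i\in\Ic),\ H'(\norm{\vec x_d}_\infty)\le B\},
\]
where $\mathcal S_i$ is the set of $\vec x_i$ whose fibre under $\psi_i$ is everywhere locally soluble.

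\textbf{Step 2: removing coprimality.} We remove \eqref{eq:coprimality-toric} by Möbius inversion over the ideal $I$. This produces a sum over tuples of moduli $\vec e$ of $\mu$-type coefficients times counts with $\vec x_d\in e_d\ZZ^{a_d}$. We truncate at $\max e\le(\log B)^{A}$, bounding the tail trivially. For the main range we need \eqref{eq:affine-asymp} along sublattices. The set $\mathcal S_i$ is invariant under scaling by squares, and each residue condition is local at finitely many primes. We therefore expect a version of \eqref{eq:affine-asymp} for $\vec x_i\equiv0\bmod e$, with the same exponent $\Delta_i$ and a multiplicative density $g_i(e)$. We will have to assume or prove this from the affine hypothesis; it is the likely source of the Euler product $\prod_p(1-1/p)^\rho|\mathfrak X(\FF_p)|/p^n$. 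Summing $\prod g_i(e_i)$ against the Möbius coefficients should give exactly that product, as in the classical toric torsor count.

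\textbf{Step 3: hyperbola method (main obstacle).} We now count tuples $(\vec x_d)_d$, each with density $N_d(b)\approx c_d b^{a_d}(\log b)^{-\Delta_d}$, subject to $\max_j\prod_d\norm{\vec x_d}^{\varpi_{j,d}}\le B$. We partition by dyadic sizes $\norm{\vec x_d}_\infty\sim B^{t_d/a_d}$, so that $\vec t$ ranges over a discretised copy of $B$-scaled $\mathcal P$. The total count is
\[
\sum B^{|\vec t|_1}\prod_d c_d\Bigl(\tfrac{t_d}{a_d}\log B\Bigr)^{-\Delta_d}\cdots .
\]
The dominant contribution comes from a $(\log B)^{\rho-1}$-dimensional neighbourhood of the face $\mathcal F$, where $|\vec t|_1=1$; the factor $a_d^{\Delta_d}$ comes from $\log(B^{t_d/a_d})=(t_d/a_d)\log B$. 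Converting the sum into an integral over $\mathcal F$ yields $(\log B)^{\rho-1-\Delta}$ times $\int_{\mathcal F}\prod t_d^{-\Delta_d}\,d\mu$, and the convergent-type hypothesis makes this integral finite. This integral, combined with the volume normalisation, should reproduce $\alpha(X)\omega_\infty/2^{n}$, matching the $\Delta=0$ toric case.

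The hard part is controlling the error terms. These come from the region near the boundary of $\mathcal F$, where some $t_d\to0$: there the asymptotic \eqref{eq:affine-asymp} is poor and the weights $t_d^{-\Delta_d}$ blow up. They also come from the points away from $\mathcal F$. We would first cut off $t_d\ge(\log B)^{-\eta}$ and use the trivial bound $N_d(b)\ll b^{a_d}$ on the excluded slab. Convergence of the integral should then give a saving of a power of $\log B$. Away from $\mathcal F$, linearity of $|\vec t|_1$ on $\mathcal P$ gives the decay $B^{-\kappa}$. Partial summation in each variable, using error $(\log)^{-\delta}$, gives the overall $\varepsilon$ saving.
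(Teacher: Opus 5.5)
Your architecture (torsor, Möbius inversion, hyperbola count over $\cal P$ concentrating on $\cal F$) is the paper's, but Step~2 has a genuine gap caused by a missed observation. You postulate sublattice asymptotics with unknown densities $g_i(e)$, leave them to be ``assumed or proved'', and credit them with the Euler product. From \eqref{eq:affine-asymp} alone, plus invariance under squares, this cannot be derived, and non-trivial $g_i$ would change the constant.

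The missing point is that each $f_i$ is \emph{linear} in $\vec x_i$, since its coefficients lie in $\QQ x_{i,1}\oplus\dots\oplus\QQ x_{i,a_i}$. So $f_i(e\vec x_i;\vec y)=e\,f_i(\vec x_i;\vec y)$, and the fibre over $e\vec x_i$ \emph{is} the fibre over $\vec x_i$ for every non-zero $e$. Consequences:
\begin{itemize}
\item The count over $\vec x_d\in\eta_d\ZZ^{a_d}$ is exactly $N_d(B_d/\eta_d)$, so $g_d(e)=e^{-a_d}$ and the box sums are $2^{-\rho}\prod_d N_d(B_d/\eta_d)$.
\item By \eqref{eq:N(B/eta)}, each $\vec\eta$-term satisfies \eqref{mainassumption} with $c_f=2^{-\rho}\prod_d c_d\eta_d^{-a_d}$ and $C_f=c_f\max_d(1+\log\eta_d)$.
\item The Euler product comes entirely from the torsor, not the family. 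Salberger's Möbius function factors through $\ZZ^{\Dc}$ (\cref{lem:mobius-factorization}; this is what justifies your block moduli $e_d$). Summing $\mu'(\vec\eta)\prod_d\eta_d^{-a_d}$ gives $\prod_p|\mathfrak Y(\FF_p)|p^{-n-\rho}$.
\item Salberger's Lemma~11.15(e) also absorbs the $\log\eta_d$ in the error, so no truncation is needed. Truncating at $(\log B)^A$ could work too, but it needs a tail bound for $N(\vec e,B)$ uniform in $\vec e$, which you do not give.
\end{itemize}

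Step~3 sketches what \cref{maintheorem_with_simplices} does, but a dyadic partition cannot give an asymptotic. In logarithmic coordinates the weight $\mathrm e^{|\vec t|_1}$ puts essentially all the mass within $O(1)$ of $\cal F(\log B)$. That face lies on the boundary of the region $H'\le B$, so dyadic boxes straddling the boundary carry a positive proportion of the main term. The paper instead uses $\theta$-adic boxes with $\theta=1+(\log B)^{-\delta'}$:
\begin{itemize}
\item the straddling layer has volume $\ll(\log B)^{d}(\log\log B)^{s-d-1}\log\theta$ and is discarded via Hölder (\cref{lem:Hoelder}, \cref{lem:boundary_theta_bound});
\item the per-box errors accumulate to a loss $(\log\theta)^{-s}$ against the saving $(\log B)^{-\delta}$ (\cref{lem:fromboxestointegrals}), which forces $\delta'$ small;
\item regions where some $t_d$ is small are controlled by \cref{lem:UpperBoundNoX} applied to $\vec\ell-\delta\vec e_i$, using that convergent type is an open condition.
\end{itemize}

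Finally, your claim that the face integral ``should reproduce $\alpha(X)\omega_\infty/2^n$'' is unjustified for $\vec\Delta\ne0$. The constant $c_{\cal P}$ in \eqref{def:cP} depends on $\vec\Delta$ through $\int_{\cal F}\prod_d t_d^{-\Delta_d}\,\mathrm d\mu$, and comparison with Manin's conjecture pins it down only at $\vec\Delta=0$. For example, for $X=\PP^1\times\PP^1$ with $\Delta_1,\Delta_2<1$, the ratio of the two values is $\Gamma(1-\Delta_1)\Gamma(1-\Delta_2)/\Gamma(2-\Delta_1-\Delta_2)$. The paper's closing specialisation has the same defect. What the argument actually yields is $c=2^{-\rho}c_{\cal P}(-\vec\Delta)\prod_p(1-\frac1p)^{\rho}|\mathfrak X(\FF_p)|p^{-n}\prod_d c_d a_d^{\Delta_d}$, which in general differs from the stated constant when $\vec\Delta\neq0$.
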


A variety of examples which satisfy \cref{eq:affine-asymp} for various values of $\Delta_d$ can be found in the references at the beginning of this section. The remaining condition, to be of convergent type, can be made explicit fairly directly, and we shall do so for two classes of toric bases: those of Picard rank 2 and blow-ups of skew linear subspaces of projective space. It turns out that it is automatically verified if all $\Delta_i$ are smaller than $1$, which in turn follows from the Loughran--Rome--Sofos conjecture on the inputs~\cref{eq:affine-asymp} whenever $\QQ[U]^\times=\QQ^\times$, though our condition allows significantly more types of input (\cref{section:examples}).\par

It is an interesting question what happens in the case that $-\vec\Delta$ in \cref{main_theorem_toric} is not of convergent type, that is, when the corresponding integral in \cref{eqn:convergenttypeF} is not convergent. We plan to address this in future work. 
\subsection{The hyperbola method}
The proof of \cref{main_theorem_toric} relies on the development of a new form of the hyperbola method that allows for counting functions which have a growth behaviour as in~\cref{eq:affine-asymp}. This generalizes earlier forms of hyperbola methods, such as work of Blomer and Brüdern \cite{BloBru18} and Schäfer \cite{Schaefer}, which are both designed for special toric varieties such as products of projective spaces, the latter of which already allows for logarithmic terms to appear. It also generalizes work of Pieropan and Schindler \cite{PieSch24}, who treat general smooth, projective, split toric varieties over $\mathbb{Q}$, but who do not allow for any logarithmic terms to appear.

We need both the flexibility to work with multiple height conditions as well as the flexibility to allow for logarithmic terms in our counting function; hence, we provide here such a tool, which we expect to be useful for future applications as well. In \cref{section:hyperbola} we describe the precise set-up of our new form of the hyperbola method and state our main result in \cref{maintheorem} as well as present a proof of it. In \cref{sec:proofMT_examples} we prove \cref{main_theorem_toric} as well as give a number of concrete classes of examples to which it applies.\par

\subsection{Notation and conventions}
For a vector $\vec r\in \mathbb R^s$ we write $|\vec r| = \max_i |r_i|$ and $|\vec r|_1=\sum_{i=1}^s |r_i|$. For two vectors $\vec r, \vec s \in \mathbb{R}^s$ we write $\vec r \leq \vec s$ if $r_i\leq s_i$ for all $1\leq i\leq s$.
We write $\Delta^s= \{(t_1,\ldots, t_s)\in \mathbb{R}_{\geq 0}^s: |\vec t|_1\leq 1\}$ for the standard $s$-dimensional simplex, so that its dilate by $\kappa>0$ is $\kappa \Delta^s = \{(t_1,\ldots, t_s)\in \mathbb{R}_{\geq 0}^s: |\vec t|_1\leq \kappa\}$.
The parameters $s$, $k$, $\vec \alpha$, $\vec \ell$, $\vec\varpi$, $\delta$, and $A$ that appear in \cref{mainassumption} and \cref{upperboundassp} will be treated as fixed throughout the article, and all implicit constants may depend on them.

\subsection{Acknowledgements}
VM was supported by Horizon Europe 2023 MSCA postdoctoral fellowship 101151205 -- GIANT, funded by the European Union and by 2025 JESH scholarship of the Austrian Academy of Sciences.
FW was supported by the Deutsche Forschungsgemeinschaft (DFG) -- 398436923 (RTG2491).

\section{The hyperbola method}\label{section:hyperbola}

Let $f \colon \mathbb N^s \to \mathbb C$ be a function and $\vec\varpi_{i} = (\varpi_{i1}, \dots, \varpi_{is}) \in \mathbb R_{\geq 0}^s$ for $1\leq i \leq k$. Define
\begin{equation*}
	\Upsilon(B; \vec X) \coloneqq \sum_{\substack{\vec x \in \mathbb N^s \\ \prod_{j=1}^s x_j^{\varpi_{ij}} \leq B/X_i ~\forall 1\leq i\leq k}} f(\vec x).
\end{equation*}
Our goal is to obtain an asymptotic formula for $\Upsilon(B; \vec X)$ as $B \to \infty$, assuming that we can evaluate $f$ on boxes in a suitable sense and assuming that the parameters $X_i$ do not grow too fast with $B$. In this article we treat the case $X_i=1$, $1\leq i\leq k$, but for some of the preparations and lemmas we will already allow $X_i$, $1\leq i\leq k$, to be more general: we plan to use and apply those in future work in the context of counting varieties in families for which the Hasse principle or weak approximation fails due to a Brauer--Manin obstruction.

We assume that the function $f$ satisfies the following two properties: there exist constants $c_f, C_f\in \mathbb{C}$, $0 < \delta \leq 1$, $\vec\alpha \in \mathbb{R}_{>0}^s$ and $\vec \ell \in \mathbb{R}^s$ such that
\begin{multline}\label{mainassumption}
	\sum_{\vec x \leq \vec B} f(\vec x) = c_f \prod_{i=1}^s B_i^{\alpha_i} (1 + \log B_i)^{\ell_i}
	 \\ 
	+ O\Bigl( C_f \bigl(1 + \min_i \log B_i\bigr)^{-\delta} \prod_{i=1}^s B_i^{\alpha_i} (1 + \log B_i)^{\ell_i} \Bigr),
\end{multline}
with $|c_f| \leq C_f$. It will turn out that the sum $\ell \coloneq  \ell_1+\dots+\ell_s$ will feature in $\Upsilon$. Moreover, we assume that there exists an $A > 0$ such that for $1<\theta \leq 2$ we have
\begin{multline}\label{upperboundassp}
	\sum_{\vec B \leq \vec x \leq \theta \vec B} |f(\vec x)| \ll C_f \Bigl( (\log \theta)^s + (\log \theta)^{-A} (1 + \min_i \log B_i)^{-\delta} \Bigr) \\
	\times \prod_{i=1}^s B_i^{\alpha_i} (1 + \log B_i)^{\ell_i}.
\end{multline}
Note that if $f \colon \mathbb N^s \to \mathbb R_{\geq 0}$ takes non-negative values, then \cref{upperboundassp} is implied by \cref{mainassumption} and can be omitted.

We cover the region of integration in the summation for $\Upsilon(B; \vec X)$ by $\theta$-adic boxes. In order to decide which boxes belong to the summation range, we introduce the following polytope. 

\begin{definition}
	For $\vec \kappa \in \mathbb R_{> 0}^k$, $\vec \alpha = (\alpha_1, \dots, \alpha_s) \in \mathbb R_{> 0}^s$, and $\vec \varpi_i = (\varpi_{i1}, \dots, \varpi_{is}) \in \mathbb R_{\geq  0}^s$ for $i = 1, \dots, k$, define
	\begin{equation*}
		\cal P(\vec \kappa)
		= \Bigl\{\,\vec t \in \mathbb R_{\geq 0}^s ~:~ \sum_{j=1}^s \frac{\varpi_{ij}}{\alpha_j} t_j \leq \kappa_i ~~\text{for}~~ i = 1, \dots, k\,\Bigr\}.
	\end{equation*}
	Associated with this polytope, we define the \emph{maximal face} $\cal F(\vec \kappa)$ as the subset of $\cal P(\vec \kappa)$ on which the function $|\vec t|_1$ attains its maximum, that is, as
	\begin{equation*}
		\cal F(\vec \kappa) = \argmax \limits_{\vec t \in \cal P(\vec \kappa)} |\vec t|_1.
	\end{equation*}
	Since $|\vec t|_1$ is a linear function on $\cal P(\vec \kappa)$, the set $\cal F(\vec \kappa)$ is indeed a face and again a polytope. For simplicity, we denote $\cal P(\kappa) = \cal P(\kappa, \ldots, \kappa)$ and similarly $\cal F(\kappa) = \cal F(\kappa, \ldots, \kappa)$ for $\kappa \in \mathbb{R}_{> 0}$.

	Note that both $\cal P(\kappa)$ and $\cal F(\kappa)$ are homogeneous. We thus set $\cal P = \cal P (1)$ and $\cal F = \cal F(1)$ to get 
	\begin{equation}\label{eq:polytope-scaling}
		\cal P(\kappa) = \kappa \,\cal P
		\quad\text{and}\quad
		\cal F(\kappa) = \kappa \,\cal F.
	\end{equation}
\end{definition}
    
For the rest of the article we shall assume that the maximal face $\cal F$ is not contained in a coordinate hyperplane. We next need to make some more assumptions on the values of the $\ell_i$, $1\leq i\leq s$, that we allow in \cref{mainassumption}, depending on the shape of the polytope: an integral that will naturally appear as part of our leading constant has to be finite.

\begin{definition}
We say that a vector $\vec\ell\in \mathbb{R}^s$ is \emph{of convergent type} for the face $\cal F$ if the integral
\begin{equation}\label{eq:face-integral}
	\int_{\cal F} \prod_{i=1}^s t_i^{\ell_i} d \mu,
\end{equation}
where $d\mu$ stands for the $d$-dimensional Lebesgue measure on the face $\cal F$,
is finite.
\end{definition}

This property can be checked combinatorially: if the face is of dimension $d$, and $\cal E$ its set of vertices, let $\vec v\colon \{0, \ldots, d\} \to \cal E$ be an ordered tuple of vertices spanning a subset of full dimension. To state the criterion, define the auxiliary function
\begin{equation*}
    s(\vec v)_j = \begin{cases}
        \infty &\text{if}~v(0)_j \neq 0,\\
        \max\{\,i \,:\, v(i)_j \neq 0 \} &\text{if}~v(0)_j = 0.
    \end{cases}
\end{equation*}
This depends only on which coordinates of the $v(i)_j$ vanish.

\begin{proposition}\label{prop:convergence-test}
	A vector  $\vec\ell\in \mathbb{R}^s$ is of convergent type for the face $\cal F$ if and only if 
\begin{equation}\label{eq:l-sum-condition}
i +  \sum_{\substack {j \leq s \\[1pt] s(\vec v)_j\leq i}} \ell_j >0
\end{equation}
for all $\vec v\colon \{0,\dots,d\}\to \cal E$ spanning a subset of full dimension and all $1 \leq i \leq d$.
\end{proposition}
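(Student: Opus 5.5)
The plan is to triangulate the face $\cal F$ and reduce convergence of \eqref{eq:face-integral} to convergence over individual simplices, each of which can be analysed explicitly in barycentric-type coordinates. Since $\cal F$ is a $d$-dimensional polytope with vertex set $\cal E$, it is a finite union of $d$-simplices with vertices in $\cal E$, overlapping only in measure zero sets. Conversely, every full-dimensional simplex $\operatorname{conv}(v(0),\dots,v(d))$ spanned by vertices lies inside $\cal F$. The integrand is nonnegative, so the integral over $\cal F$ is finite if and only if it is finite over every full-dimensional vertex simplex. It therefore suffices to show: for a fixed simplex $S=\operatorname{conv}(v(0),\dots,v(d))$, the integral over $S$ is finite if and only if \eqref{eq:l-sum-condition} holds for all orderings $\vec v$ of its vertices and all $1\le i\le d$. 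Here the orderings range over all bijections onto the vertex set, and these are exactly the tuples $\vec v$ in the statement with image the given vertices.

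For a fixed simplex, I parametrize $t=\sum_{i=0}^d \lambda_i v(i)$ with $\lambda$ in the standard simplex. The Jacobian is constant, and each coordinate satisfies $t_j=\sum_i\lambda_i v(i)_j$, which is comparable, with constants depending only on the vertices, to $\max\{\lambda_i : v(i)_j\neq 0\}$. (Note that $v(i)_j\ge 0$.) If some $t_j$ vanishes identically on $S$, the integrand is undefined or infinite there. This cannot happen, because $\cal F$ is not contained in a coordinate hyperplane, and the simplices are full-dimensional in $\cal F$, so each coordinate is nonzero at some vertex. The integrand is thus comparable to $\prod_j \max_{i\in T_j}\lambda_i^{\ell_j}$, where $T_j=\{i: v(i)_j\ne 0\}$. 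Next I decompose the $\lambda$-simplex into the $(d+1)!$ regions $\lambda_{\tau(0)}\ge\lambda_{\tau(1)}\ge\dots\ge\lambda_{\tau(d)}$. Relabelling the vertices by $\tau$ gives a new ordering $\vec v$ in which $\lambda_0\ge\lambda_1\ge\dots\ge\lambda_d$. On this region $\lambda_0\asymp 1$ and $\max_{i\in T_j}\lambda_i=\lambda_{\min T_j}$.

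I then change to coordinates $\lambda_i=\lambda_0\prod_{m\le i}u_m$ with $u_m\in(0,1]$, or simply integrate iteratively. The factor for $t_j$ is $\lambda_{m_j}^{\ell_j}$, where $m_j=\min T_j$. Here is the key step, and the place I expect friction: the statement's $s(\vec v)_j$ is a \emph{max} over $i$ with $v(i)_j\neq0$, not a min. This suggests the decreasing order should be taken reversed, $\lambda_d\ge\dots\ge\lambda_0$, with $v(0)$ the vertex carrying the smallest weight. In that case $\max_{i\in T_j}\lambda_i=\lambda_{\max T_j}$. If $0\in T_j$, the exponent is attached to the convention $\infty$ — I will need to check carefully which convention matches, adjusting the labelling so that $s(\vec v)_j$ is precisely the index of the largest barycentric weight touching coordinate $j$ and $\infty$ marks coordinates that never degenerate in the relevant limit. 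With the correct labelling, the integrand on the region becomes, up to constants, $\prod_{i=1}^{d}\mu_i^{\sum_{s(\vec v)_j=i}\ell_j}$ in the ordered variables $\mu_1\le\mu_2\le\dots\le\mu_d\le 1$.

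The final step is an elementary iterated integral. I integrate over the smallest variable first, $\mu_1\in(0,\mu_2)$. This converges if and only if $1+\sum_{s=1}\ell_j>0$, and it produces $\mu_2^{1+\sum_{s=1}\ell_j}$. Inductively, the $i$-th integration converges if and only if $i+\sum_{s(\vec v)_j\le i}\ell_j>0$. When all these conditions hold, the result is finite. If one fails, the corresponding inner integral diverges on a set of positive measure of the outer variables. Because everything is positive, we get divergence whenever the $i$-th condition fails, even if earlier ones hold. And if an earlier one fails, we already have divergence. This yields the equivalence region by region, hence for the simplex, and therefore for $\cal F$. The main obstacle is the bookkeeping of the comparability $t_j\asymp\lambda_{s(\vec v)_j}$ on each ordered region and matching it to the paper's convention for $s$.
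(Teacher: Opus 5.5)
Your argument follows the paper's proof: reduce to full-dimensional vertex simplices, use $t_j\asymp\max\{\lambda_i : v(i)_j\neq 0\}$, split according to the ordering of the barycentric weights, and integrate iteratively from the smallest weight upward. The reduction to simplices, the comparability estimate, the use of the standing assumption that the face lies in no coordinate hyperplane, and the Tonelli argument for divergence are all fine.

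The gap is in the step you left open, matching the labelling to $s(\vec v)$, and your tentative resolution of it is wrong. If $v(0)$ carries the \emph{smallest} weight, a coordinate with $v(0)_j\neq 0$ can still degenerate. For example, if $v(i)_j=0$ for all $i\ge 1$, then $t_j\asymp\lambda_0\to 0$, yet the convention $s(\vec v)_j=\infty$ would drop this coordinate from every condition.

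The labelling that matches the statement, and the one the paper uses, keeps $v(0)$ as the vertex of \emph{largest} weight and orders only the other weights increasingly, $\lambda_1\le\dots\le\lambda_d$. That weight is at least $1/(d+1)$, so $t_j\asymp 1$ whenever $v(0)_j\neq 0$. Equivalently, start from your first labelling $\lambda_0\ge\lambda_1\ge\dots\ge\lambda_d$ and reverse only the indices $1,\dots,d$, not $0$. Then $t_j\asymp\lambda_{s(\vec v)_j}$ whenever $v(0)_j=0$, because the maximum over $\{i\ge 1: v(i)_j\neq 0\}$ is attained at the largest such index.

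One small point on the domain: the resulting region lies between $\{\lambda_1\le\dots\le\lambda_d\le 1/(d+1)\}$ and $\{\lambda_1\le\dots\le\lambda_d\le 1\}$, not exactly on the latter. By scaling, this does not affect convergence. With this labelling, your iterated integration yields exactly the conditions in the statement.
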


\begin{proof}
We write $\vec w(i) = \vec v(i) - \vec v(0)$. The integral~\eqref{eq:face-integral} converges if and only if its restriction to every full-dimensional simplex, spanned by vertices $\vec v(0),\dots,\vec v(d)$, does, that is, if
\begin{equation*}
	\int_{\Delta^d} \Bigl( (1 - |\vec \lambda|_1) \vec v(0) + \sum_{i=1}^d \lambda_i \vec v(i) \Bigr)^{\vec \ell} \,\mathrm d\vec \lambda
\end{equation*}
is finite.
Notice that these integrals are invariant under permutations of indices. Therefore, our face is still covered when we restrict the range of integration to $(1 - 1/(d+1)) \Delta^d$, since at least one of the $\lambda_i$ or $1 - |\vec \lambda|_1$ is at least $1/(d+1)$. This ensures that for any point $\vec \lambda \in (1 - 1/d) \Delta^d$ the $j$-th coordinate of 
\begin{equation*}
    (1 - |\vec \lambda|_1) \vec v(0) + \sum_{i=1}^d \lambda_i \vec v (i)
\end{equation*}
can only be zero if the $j$-th coordinate of $\vec v(0)$ is zero. Then the above integral is given by
\begin{equation*}
	\int_{(1 - 1/(d+1)) \Delta^d} \Bigl( \vec v(0) + \sum_{i=1}^d \lambda_i \vec w(i) \Bigr)^{\vec \ell} \,\mathrm d\vec \lambda.
\end{equation*}
We can further split the integrations by permuting the indices of $\lambda_i$ to ensure $\lambda_1 \leq \lambda_2 \leq \cdots \leq \lambda_d$. We have the bounds
\begin{equation*}
	v(0)_j + \sum_{i=1}^d \lambda_i w(i)_{j} \asymp_{d, \vec v(1), \dots, \vec v(d)} \max_i \begin{cases}
		1                                  & \text{if}~~ v(0)_{j} > 0, \\
		\max \{ \lambda_i : w(i)_{j} > 0 \} & \text{if}~~ v(0)_{j} = 0.
	\end{cases}
\end{equation*}
Here, the interesting case is $v(0)_j = 0$. In this case $w(i)_j = v(i)_j$, which shows that the maximum above is attained for $i = s(\vec v)_j$. Hence
\begin{equation*}
	v(0)_{j} + \sum_{i=1}^d \lambda_i w(i)_{j} \asymp_{d, \vec v(1), \dots, \vec v(d)} \lambda_{s(\vec v)_j}.
\end{equation*}
Our condition is then equivalent to the convergence of
\begin{equation*}
	\int_{\substack{(1 - 1/(d+1)) \Delta^d\\ \lambda_1\leq \lambda_2\leq \ldots \leq \lambda_d}} \prod_{j \text{ s.t.\ }  v(0)_{j} = 0} \lambda_{s(\vec v)_j}^{\ell_j} \,\mathrm d\vec \lambda = \int_{\substack{(1 - 1/(d+1)) \Delta^d\\ \lambda_1\leq \lambda_2\leq \ldots \leq \lambda_d}} \prod_{i=1}^d \lambda_i^{\sum_{j: v(0)_j=0, s(\vec v)_j=i}\ell_j} \, \mathrm d\vec \lambda .
\end{equation*}
Note that the integral over $\lambda_1$ is convergent if and only if
\begin{equation*}
\sum_{j: v(0)_j=0, s(\vec v)_j=1} \ell_j >-1,
\end{equation*}
and then the resulting integral over $\lambda_2$ is convergent if and only if
\begin{equation*}
1+ \sum_{j: v(0)_j=0, s(\vec v)_j=1} \ell_j +\sum_{j: v(0)_j=0, s(\vec v)_j=2} \ell_j >-1.
\end{equation*}
One now easily checks that the above integral converges if and only if for every $1\leq i\leq d$ we have
\begin{equation*}
i +  \sum_{\substack{1\le j\le s \\ v(0)_j=0,\ s(\vec v)_j\leq i}} \ell_j >0.
\end{equation*}
Since $v(0)_j \neq 0$ implies that $s(\vec v)_j = \infty$, the condition $v(0)_j = 0$ may be dropped in all these summations.
\end{proof}

\begin{remark}\label{rem:convergent_type_open}
By \cref{prop:convergence-test}, we observe that for a fixed face  $\cal F$, the condition of being of convergent type is an open condition for the vector $\vec\ell$ in the Euclidean topology on $\mathbb{R}^s$.
\end{remark}

Let $a = \max_{\vec x \in \cal P} |\vec x|_1$. We now state our main theorem in the case that $X_i=1$ for all $1\leq i\leq k$.

\begin{theorem}\label{maintheorem}
Let $f \colon \mathbb N^s \to \mathbb C$ be a function that satisfies \cref{mainassumption} and that is furthermore real and non-negative or satisfies \cref{upperboundassp}. Assume that the polytope $\cal P$ is bounded, that $\cal F$ is of dimension $d$ and not contained in a coordinate hyperplane, and that $\vec\ell$ is of convergent type for the face $\cal F$.

Then there exist $\varepsilon > 0$ and a constant $c_{\cal P}$, which only depends on $\vec \ell$ and the polytope $\cal P$, such that
    \begin{align*}
\Upsilon(B; \vec 1)
		= \frac{c_f c_{\cal P}}
		{\prod_{i=1}^s \alpha_i^{\ell_i}} B^a (\log B)^{d+\ell}
			+ O_{\vec \varpi, \vec \alpha, \vec \ell}(C_f B^a (\log B)^{d + \ell - \varepsilon}).
\end{align*}
\end{theorem}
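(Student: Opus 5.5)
I would prove \cref{maintheorem} by a $\theta$-adic dissection of the summation range, with $\theta = 1+(\log B)^{-\eta}$ for a small $\eta>0$ fixed at the end. First I pass to logarithmic coordinates, writing $x_j \approx \theta^{m_j}$ and $t_j = \alpha_j m_j \log\theta/\log B$. The height conditions $\prod_j x_j^{\varpi_{ij}}\le B$ then say that $\vec t$ lies in $\cal P$, up to an error of size $O(\log\theta/\log B)$ in each coordinate. On a box $\vec\theta^{\vec m}<\vec x\le\vec\theta^{\vec m+1}$ I evaluate $\sum f$ by inclusion–exclusion from \cref{mainassumption}. This gives the main term
\begin{equation*}
c_f\prod_j \alpha_j(\log\theta)\,\theta^{\alpha_j m_j}(1+m_j\log\theta)^{\ell_j}\,(1+O(\log\theta)),
\end{equation*}
plus an error of relative size $(1+\min_j m_j\log\theta)^{-\delta}(\log\theta)^{-s}$, since the inclusion–exclusion differences lose a factor $(\log\theta)^s$. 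Boxes that are only partly inside the region are boundary boxes. They lie in an $O(\log\theta/\log B)$-neighbourhood of $\partial\cal P$, and I bound them using \cref{upperboundassp}, or using \cref{mainassumption} together with positivity when $f\ge 0$.

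Next I turn the resulting sum over lattice points $\vec m$ in the dilated polytope $(\log B/\log\theta)\,\mathrm{diag}(\alpha)^{-1}\cal P$ into an integral. The weight $\prod_j\theta^{\alpha_j m_j} = B^{|\vec t|_1}$ is exponentially concentrated near the face $\cal F$, where $|\vec t|_1=a$. I parametrize a neighbourhood of $\cal F$ by a point $\vec u\in\cal F$ and a transverse depth $\tau = a-|\vec t|_1\ge 0$. Because $\cal F$ is a face, the slice of $\cal P$ at depth $\tau$ is a polytope that differs from $\cal F$ by $O(\tau)$. Summing the geometric factor $B^{-\tau}$ over the transverse directions contributes a factor of the form $(\log B)^{-(s-d)}$ times a $\cal P$-dependent constant; the factors $\log\theta$ are absorbed in the Riemann-sum normalization. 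The $d$ directions along $\cal F$ contribute $(\log B/\log\theta)^d$ lattice points, each with weight $\prod_j(u_j\log B/\alpha_j)^{\ell_j}$. Collecting everything produces
\begin{equation*}
B^a(\log B)^{d+\ell}\,\frac{c_f}{\prod_j\alpha_j^{\ell_j}}\,c_{\cal P},
\end{equation*}
where $c_{\cal P}$ is the face integral \cref{eq:face-integral} multiplied by a transverse volume constant. The factor $\alpha_j^{-\ell_j}$ comes out exactly as in the statement. Comparing the Riemann sum with the integral costs $O(\log\theta)$ plus contributions near the coordinate hyperplanes.

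I expect the main obstacle to be the region where some $t_j$ is small. There the logarithmic weights $t_j^{\ell_j}$ with $\ell_j<0$ blow up, the error term of \cref{mainassumption} saves only $(1+\min\log B_i)^{-\delta}$, and the Riemann-sum approximation is poor. My first attempt is to cut out the set where $\min_j t_j\le(\log B)^{-\eta'}$ restricted to the face, which is nonempty only when $\cal F$ meets coordinate hyperplanes. I would bound its contribution by the tail of \cref{eq:face-integral}. By \cref{prop:convergence-test} and \cref{rem:convergent_type_open}, $\vec\ell$ stays of convergent type after perturbing to $\vec\ell-\eta''\vec 1$, so this tail is $O((\log B)^{-\eta'\eta''})$. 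To make this rigorous I must control the discrete sum near the hyperplanes uniformly by the perturbed integral. Separately, the depth direction must be truncated at $\tau\gg\log\log B/\log B$ with a trivial bound. Finally I choose $\eta,\eta'$ small enough that all error terms, including the $(\log\theta)^{-s}$ and $(\log\theta)^{-A}$ losses, beat the main term by $(\log B)^{-\varepsilon}$.
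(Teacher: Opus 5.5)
Your proposal is a viable outline, and its first half is the paper's own argument (\cref{lem:fromboxestointegrals}, \cref{lem:boundaryboxes}, \cref{lem:sumtointegral}). The shared steps are the $\theta$-adic boxes, inclusion--exclusion from \eqref{mainassumption} with its $(\log\theta)^{-s}$ loss, boundary boxes via \eqref{upperboundassp}, and $\theta=1+(\log B)^{-\eta}$ with $\eta$ small against $\delta$. There are two small differences in that half:
\begin{itemize}
\item The paper writes each box's main term exactly as $(\log\theta)^s$ times an integral over a unit cube in $\vec j$-space. The box sum therefore becomes $\int_{\cal P(\log B)}\prod_i \mathrm e^{t_i}(1+t_i)^{\ell_i}\,\mathrm d\vec t$ directly, with no separate Riemann-sum comparison.
\item Your box main term has relative error $O(\log\theta+(1+m_j\log\theta)^{-1})$, not $O(\log\theta)$. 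The extra term is harmless away from the coordinate hyperplanes.
\end{itemize}

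\medskip
The genuine divergence is in evaluating that integral. The paper triangulates $\cal P$. It discards simplices with at most $d$ vertices on $\cal F$ using volume bounds and Hölder with $\vec\ell/(1-\epsilon)$ (\cref{lem:Hoelder}). For the remaining simplices it integrates out the $s-d$ transverse barycentric coordinates by integration by parts (\cref{lem:AsympSimplexD1}). It handles the $(1+\min_i t_i)^{-\delta}$ errors directly, via convergent type of $\vec\ell-\delta\vec e_i$, rather than by explicit cut-offs. This produces the explicit formula \eqref{def:cP}.

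Your fibration over $\cal F$ is more conceptual. It gives $c_{\cal P}=\kappa_{\cal P}\int_{\cal F}\prod_i t_i^{\ell_i}\,\mathrm d\mu$ with $\kappa_{\cal P}=\int_C \mathrm e^{\langle\mathbf 1,\vec v\rangle}\,\mathrm d\vec v$, where $C$ is the transverse part of the tangent cone of $\cal P$ along $\cal F$. This also explains why the constants $C_{S_j}$ of all simplices sharing a given $d$-simplex of $\cal F$ add up to one common number.

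\medskip
To make your route rigorous, three points need more than your sketch provides.
\begin{itemize}
\item Knowing that the depth-$\tau$ slice is $O(\tau)$-close to $\cal F$ only gives upper bounds. The constant comes from the fact that the tangent cone of $\cal P$ is the same at every relative interior point of $\cal F$. Consequently, $\cal P$ coincides with $\operatorname{aff}(\cal F)+C$ in a ball of radius $\asymp r$ around any point of $\cal F$ at distance $\ge r$ from its relative boundary.
\item You must excise a neighbourhood of the whole relative boundary of $\cal F$, not only of $\cal F\cap\{t_j=0\}$, because the cone jumps there.
\item The step you flag, controlling the near-hyperplane sum by the perturbed face integral, is where the paper does its real work.
\begin{itemize}
\item Comparing $\vec t$ with its face point costs a factor $(1+\tau\log B)^{|\vec\ell|_1}$. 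This must be absorbed by $B^{-\tau}$, as in \cref{lem:UpperBoundNoX}.
\item You also need $\int_{\cal F}\prod_i(t_i+Y)^{\ell_i}\,\mathrm d\mu\ll 1$ uniformly in $Y$ (\cref{lem:basicboundaryint}). This is not a pointwise consequence of the convergence of \eqref{eq:face-integral}: the shift enlarges the factors with $\ell_i>0$ that compensate the negative exponents. It follows by iterating $\int_{\lambda\le Z}(\lambda+Y)^{\mu}\,\mathrm d\lambda\ll(Y+Z)^{\mu+1}$.
\end{itemize}
\end{itemize}
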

We shall obtain an explicit description of the constant $c_{\cal P}$ in \cref{maintheorem_with_simplices}.

\subsection{Reduction to an integral}

We claim that for many combinations of $\vec \varpi$, $\vec \alpha$, and $\vec \ell$ the behaviour of the function $\Upsilon$ is fairly straightforward to predict.
In particular, one is able to explicitly express the asymptotic as an integral of some function over a domain of the above form.

In order to achieve this, we start by decomposing the summation of $\Upsilon$ into many small boxes. We pick a parameter $1<\theta \leq 2$ to govern the size of these boxes. As $\theta$ approaches $1$, the boxes become smaller and smaller. Consequently, we may hope that boxes that only partially intersect the summation domain of $\Upsilon$ will contribute a negligible amount to the overall sum.

The boxes of concern are defined as follows. For $\vec j \in \mathbb N^s$, we define
\begin{equation*}
	\cal B_{\vec j} = \prod_{i = 1}^s \begin{cases}
		[1, \theta^{1/\alpha_i}]                             & \text{if}~~ j_i = 1, \\
		(\theta^{(j_i - 1)/\alpha_i}, \theta^{j_i/\alpha_i}] & \text{if}~~ j_i > 1.
	\end{cases}
\end{equation*}
The definition ensures that the union of all boxes $\cal B_{\vec j}$ for $\vec j \in \mathbb N^s$ covers the entire domain $\mathbb R_{\geq 1}^s$. The following lemma describes which boxes lie entirely inside the summation of $\Upsilon$ and which boxes require more careful analysis (as they might partially intersect the summation domain).

\begin{lemma}\label{lem:decomposition into boxes}
	Let $\gamma > \max_{1\leq l\leq k} \sum_{i=1}^s \frac{\varpi_{li}}{\alpha_i}$ and set
	\begin{equation*}\textstyle
		\vec U_{\!-} = \bigl(\frac{\log (B/X_1)}{\log \theta}, \dots, \frac{\log (B/X_k)}{\log \theta}\bigr)
		\quad\text{and}\quad
		\vec U_{\!+} = \vec U_{\!-} + (\gamma, \dots, \gamma).
	\end{equation*}
	Then
	\begin{equation*}
		\Upsilon(B; \vec X)
		= \sum_{\vec j \in \cal P(\vec U_{\!-})} \sum_{\vec x \in \cal B_{\vec j}} f(\vec x)
		+ O\biggl(~ \sum_{\substack{\vec j \in \cal P(\vec U_{\!+}) \\ \vec j \notin \cal P(\vec U_{\!-})}} \sum_{\vec x \in \cal B_{\vec j}} |f(\vec x)| ~\biggr).
	\end{equation*}
	Here, the summations for $\vec j$ run through elements of $\mathbb N^s$.
\end{lemma}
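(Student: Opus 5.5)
The plan is to compare the summation domain of $\Upsilon(B;\vec X)$ with the union of boxes $\cal B_{\vec j}$ via logarithmic coordinates. Since the boxes $\cal B_{\vec j}$, $\vec j\in\mathbb N^s$, partition $\mathbb R_{\ge 1}^s\supseteq\mathbb N^s$, we can write $\Upsilon(B;\vec X)=\sum_{\vec j}\sum_{\vec x\in\cal B_{\vec j}\cap \mathcal R} f(\vec x)$, where $\mathcal R=\{\vec x: \prod_j x_j^{\varpi_{ij}}\le B/X_i\ \forall i\}$. We then show two inclusions:
\begin{enumerate}[label=(\roman*)]
\item if $\vec j\in\cal P(\vec U_{\!-})$ then $\cal B_{\vec j}\subseteq\mathcal R$;
\item if $\cal B_{\vec j}\cap\mathcal R\neq\emptyset$ then $\vec j\in\cal P(\vec U_{\!+})$.
\end{enumerate}
Granting these, the boxes with $\vec j\in\cal P(\vec U_{\!-})$ contribute exactly the main term. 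Every other box meeting $\mathcal R$ has index in $\cal P(\vec U_{\!+})\setminus\cal P(\vec U_{\!-})$, and its partial contribution is bounded in absolute value by $\sum_{\vec x\in\cal B_{\vec j}}|f(\vec x)|$, which gives the error term.

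For (i), take $\vec x\in\cal B_{\vec j}$. Then $x_m\le\theta^{j_m/\alpha_m}$ for every coordinate $m$, including $j_m=1$. Hence
\[
\log\prod_m x_m^{\varpi_{im}}\le \log\theta\sum_m \frac{\varpi_{im}}{\alpha_m}j_m\le \log\theta\cdot U_{-,i}=\log(B/X_i),
\]
using $\varpi_{im}\ge 0$ and the defining inequality of $\cal P(\vec U_{\!-})$.

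For (ii), take $\vec x\in\cal B_{\vec j}\cap\mathcal R$. Now $x_m\ge\theta^{(j_m-1)/\alpha_m}$ in both cases, because for $j_m=1$ this reads $x_m\ge 1$. So
\[
\log\theta\sum_m\frac{\varpi_{im}}{\alpha_m}(j_m-1)\le\log(B/X_i),
\]
that is,
\[
\sum_m\frac{\varpi_{im}}{\alpha_m}j_m\le U_{-,i}+\sum_m\frac{\varpi_{im}}{\alpha_m}<U_{-,i}+\gamma=U_{+,i}.
\]
This uses the choice of $\gamma$. Nonnegativity of $\vec j$ is automatic, so $\vec j\in\cal P(\vec U_{\!+})$.

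There is no real obstacle. The only points needing care are the boundary case $j_m=1$, where the box is closed at $1$ rather than half-open, and the observation that $\cal P(\vec\kappa)$ is defined via nonnegative weights, so monotonicity in each coordinate is valid. One should also note that if some $B/X_i<1$, the statement still holds trivially: the inclusions above are purely formal and remain valid even when the polytopes contain no lattice points.
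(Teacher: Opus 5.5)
Your proof is correct and is essentially the paper's argument: both use the upper endpoints $\theta^{j_m/\alpha_m}$ to show that boxes indexed by $\cal P(\vec U_{\!-})$ lie entirely in the summation region, and both use the lower endpoints together with the choice of $\gamma$ to show that any box meeting the region has index in $\cal P(\vec U_{\!+})$ (the paper states this second step as its contrapositive). Your remarks that the boxes partition $\mathbb R_{\ge 1}^s$, and that the closed box at $j_m=1$ still gives $x_m\ge\theta^{(j_m-1)/\alpha_m}$, make explicit two points the paper leaves implicit.
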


\begin{proof}
	The proof follows from the following two observations.
	\begin{enumerate}[leftmargin=*]
		\item If $\vec j \in \cal P(\vec U_{\!-})$, then all $\vec x \in \cal B_{\vec j} \cap \mathbb N^s$ appear in the sum of $\Upsilon(B; \vec X)$. Indeed, if we consider a point $\vec x \in \cal B_{\vec j} \cap \mathbb N^s$ for such a value of $\vec j$, then $x_i\leq \theta^{j_i/\alpha_i}$, $1\leq i\leq s$ and for every $1\leq l\leq k$ we have
		      $$
			      \prod_{i=1}^s x_i^{\varpi_{li}} \leq \prod_{i=1}^s \theta^{\frac{\varpi_{li}j_i}{\alpha_i}} = \exp \left( \sum_{i=1}^s \frac{\varpi_{li}j_i}{\alpha_i} \log \theta \right) \leq \frac{B}{X_l}.
		      $$
		\item If $\vec j \notin \cal P(\vec U_{\!+})$, then no $\vec x \in \cal B_{\vec j} \cap \mathbb N^s$ appears in the sum of $\Upsilon(B; \vec X)$. Indeed, in this situation there exists a $1\leq l\leq k$ with
		      $$
			      \sum_{i=1}^s \frac{\varpi_{li}}{\alpha_i} j_i>\frac{\log (B/X_l)}{\log \theta} +\gamma,
		      $$
		      and hence
		      $$
			      \sum_{i=1}^s \frac{\varpi_{li}}{\alpha_i} (j_i-1)> \frac{\log (B/X_l)}{\log \theta} +\gamma - \sum_{i=1}^s \frac{\varpi_{li}}{\alpha_i} > \frac{\log (B/X_l)}{\log \theta} .       $$
		      From this we obtain $\prod_{1\leq i\leq s} x_i^{\varpi_{li}}> B/X_l$ and $\vec x$ does not appear in the sum of $\Upsilon(B; \vec X)$.
	\end{enumerate}
	This leaves the boxes $\cal B_{\vec j}$ for $\vec j \in \cal P(\vec U_{\!+}) \setminus \cal P(\vec U_{\!-})$ as the only boxes that might partially intersect the summation domain of $\Upsilon(B; \vec X)$, yielding the desired decomposition.
\end{proof}

Up to this point, we have not used any of the assumptions on the function $f$. The next step is to make use of the available asymptotic formula for $f$ to obtain an explicit expression for the first term in the previous lemma.

\begin{lemma}\label{lem:fromboxestointegrals}
	Let $\vec W = (\log (B/X_1), \dots, \log (B/X_k)) = (\log \theta) \vec U_{\!-}$ and $1< \theta \leq 2$. Then for $\gamma > \max_{1\leq l\leq k} \sum_{i=1}^s \frac{\varpi_{li}}{\alpha_i}$ we have
	\begin{multline*}
		\sum_{\vec j \in \cal P(\vec U_{\!-})} \sum_{\vec x \in \cal B_{\vec j}} f(\vec x)
		= c_f \Bigl(\prod_{i=1}^s \alpha_i^{-\ell_i}\Bigr) \int_{\cal P(\vec W)}  \prod_{i=1}^s \mathrm e^{t_i} (1+t_i )^{\ell_i} \,\mathrm d\vec t\\
		+ O\biggl(
		C_f (\log \theta)^{-s} \int_{\cal P(\vec W)} \bigl(1 + \min\nolimits_i t_i\bigr)^{-\delta} \prod_{i=1}^s \mathrm e^{t_i} (1 + t_i)^{\ell_i} \,\mathrm d\vec t
		\biggr)\\
		+ O\biggl(
		C_f \int_{\Xi'} \prod_{i=1}^s \mathrm e^{t_i} (1 + t_i)^{\ell_i} \,\mathrm d\vec t
		\biggr),
	\end{multline*}
	where
	\begin{equation*}
		\Xi' = \cal P(\vec W) \setminus \cal P(\vec W - (\gamma \log \theta, \dots, \gamma \log \theta)).
	\end{equation*}
\end{lemma}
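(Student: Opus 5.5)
The plan is to evaluate each box sum $\sum_{\vec x\in\cal B_{\vec j}} f(\vec x)$ by inclusion--exclusion using \cref{mainassumption}. Then I compare the resulting main term, box by box, with the integral of $c_f\prod_i \alpha_i^{-\ell_i}\mathrm e^{t_i}(1+t_i)^{\ell_i}$ over the image of the box in logarithmic coordinates. For $\vec j\in\mathbb N^s$ with all $j_i>1$, the box $\cal B_{\vec j}$ is a product of half-open intervals $(\theta^{(j_i-1)/\alpha_i},\theta^{j_i/\alpha_i}]$. Its sum is therefore the $2^s$-term alternating combination of $S(\vec B)=\sum_{\vec x\le\vec B}f(\vec x)$ over corners $\vec B$ with $B_i\in\{\theta^{(j_i-1)/\alpha_i},\theta^{j_i/\alpha_i}\}$. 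The case $j_i=1$ is similar, with lower corner $0$, where $S$ vanishes.

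Inserting \cref{mainassumption} splits this into two pieces. The main pieces combine multiplicatively into $c_f\prod_i \bigl(g_i(j_i)-g_i(j_i-1)\bigr)$ with $g_i(u)=\theta^{u}(1+\tfrac{u}{\alpha_i}\log\theta)^{\ell_i}$. Substituting $t_i=u\log\theta$, so that $B_i^{\alpha_i}=\mathrm e^{t_i}$ and $\log B_i=t_i/\alpha_i$, each difference is $\int \frac{d}{dt}\bigl[\mathrm e^{t}(1+t/\alpha_i)^{\ell_i}\bigr]\,dt$ over an interval of length $\log\theta$. The derivative equals $\mathrm e^{t}(1+t/\alpha_i)^{\ell_i}(1+O((1+t)^{-1}))$. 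Moreover $(1+t/\alpha_i)^{\ell_i}=\alpha_i^{-\ell_i}(1+t)^{\ell_i}(1+O((1+t)^{-1}))$. Hence the main term of the box equals $c_f\prod\alpha_i^{-\ell_i}\int_{\text{box}}\prod_i \mathrm e^{t_i}(1+t_i)^{\ell_i}\,d\vec t$. The relative error is $O(\sum_i(1+t_i)^{-1})$, which is bounded by $O((1+\min_i t_i)^{-\delta})$ since $\delta\le 1$. Its total contribution is therefore dominated by the second error term, because $(\log\theta)^{-s}\ge 1$.

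The error pieces of \cref{mainassumption} at the corners are each $\ll C_f(1+\min_i t_i)^{-\delta}\prod_i \mathrm e^{t_i}(1+t_i)^{\ell_i}$, evaluated at a corner of the box. All corners are comparable to any point of the box up to factors depending only on $\theta\le 2$ and the $\alpha_i$. Since the box has volume $(\log\theta)^s$, such a pointwise quantity is $\ll(\log\theta)^{-s}$ times its integral over the box. Boundary effects near $t_i=0$ cause no trouble, since there $(1+t)$ is comparable to $1$.

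Summing over $\vec j\in\cal P(\vec U_{\!-})$, the union of the log-boxes $\prod_i[(j_i-1)\log\theta,\,j_i\log\theta]$ is a region $\cal R$. I claim that $\cal P(\vec W-\gamma\log\theta\,\vec 1)\subseteq\cal R\subseteq\cal P(\vec W)$. The upper inclusion holds because $\vec j\in\cal P(\vec U_{\!-})$ gives $(\log\theta)\vec j\in\cal P(\vec W)$, the box lies below this point coordinatewise, and $\cal P$ is downward closed in $\mathbb R_{\ge0}^s$. For the lower inclusion, let $\vec t$ be a point of the smaller polytope and take $\vec j=\lceil \vec t/\log\theta\rceil$. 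Then $\sum_i\frac{\varpi_{li}}{\alpha_i}j_i\le\frac{W_l}{\log\theta}-\gamma+\sum_i\frac{\varpi_{li}}{\alpha_i}<U_{-,l}$, so $\vec j\in\cal P(\vec U_{\!-})$. Replacing the integral over $\cal R$ by the integral over $\cal P(\vec W)$ thus costs at most the integral of the positive integrand over $\Xi'$, which is the third error term. The integrals of the error pieces over $\cal R$ are bounded by integrals over $\cal P(\vec W)$ by positivity.

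The main obstacle is bookkeeping: the comparability constants between corners and box points must be uniform in $\vec j$, including the degenerate boxes with $j_i=1$. These constants must depend only on $\vec\alpha$ and $\vec\ell$, which is assured because $\theta\le 2$.
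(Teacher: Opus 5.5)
Your proposal is correct and follows essentially the same route as the paper. Both arguments apply inclusion--exclusion of \cref{mainassumption} on each box, rewrite the corner differences as integrals over the logarithmic box, bound the pointwise corner errors by $(\log\theta)^{-s}$ times their box integrals, and compare the union of boxes with $\cal P(\vec W)$ via $\vec j=\lceil \vec t/\log\theta\rceil$ and the choice of $\gamma$; you merely work directly in the variable $t=u\log\theta$ and track the derivative error slightly more sharply than the paper does.

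One small point to tidy: if you take lower corner $0$ when $j_i=1$, the main factor in that coordinate is $g_i(1)$ rather than $g_i(1)-g_i(0)$. The discrepancy $c_f\,g_i(0)\prod_{i'\ne i}(\cdots)$ is still absorbed by the second error term, because $(1+\min_i t_i)^{-\delta}\asymp 1$ on such boxes, so that term is there comparable to $C_f$ times the value of $\prod_{i'}\mathrm e^{t_{i'}}(1+t_{i'})^{\ell_{i'}}$ on the box.
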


A neat consequence of the above lemma is that the main term no longer depends on the parameter $\theta$. The second of the two error terms will be small since the volume of $\Xi'$ gets smaller as $\theta$ approaches $1$. In practice, once the size of the main term has been determined, this will be a straightforward consequence of Hölder's inequality. We also encounter our first issue that will constrain the setting in which this approach is viable: for some choices of parameters $\vec \varpi$, $\vec \alpha$, and $\vec \ell$, one will obtain
\begin{equation*}
	\int_{\cal P(\vec W)} \bigl(1 + \min\nolimits_i t_i\bigr)^{-\delta} \prod_{i=1}^s \mathrm e^{t_i} (1 + t_i)^{\ell_i} \,\mathrm d\vec t
	\asymp \int_{\cal P(\vec W)} \prod_{i=1}^s \mathrm e^{t_i} (1 + t_i)^{\ell_i} \,\mathrm d\vec t,
\end{equation*}
which makes it impossible to give sufficiently strong bounds for the error term. The simplest example of such a situation is the case $k = 2$, $\vec \varpi = \vec \alpha$ and $\ell_i \in (-\infty, -1)$ for some $i$.

\begin{proof}
	We begin by computing the contribution of a single box $\cal B_{\vec j}$.
	Using $1 < \theta \ll 1$ and the derivative 
	\begin{equation*}
		\Bigl( \theta^t (1 + t \log \theta)^\nu \Bigr)' = (\log \theta) \theta^t \Bigl( (1 + t \log \theta)^{\nu} + \nu (1 + t \log \theta)^{\nu - 1} \Bigr)
	\end{equation*}
	in $t$, we obtain the asymptotic
	\begin{multline*}
		\theta^j (1+ j \log \theta )^\nu - \theta^{j-1}(1+ (j-1) \log \theta )^\nu \\
		= (\log \theta) \int_{j-1}^j \theta^t (1 + t \log \theta)^\nu \,\mathrm dt + O_\nu\Bigl( \theta^j (1 + j \log \theta)^{\nu - 1} \Bigr)
	\end{multline*}
	 for $j\geq 1$.
	Inclusion-exclusion immediately yields
	\begin{multline*}
		\sum_{\vec r \in \{0, 1\}^s} (-1)^{|\vec r|_1} \prod_{i=1}^s \theta^{j_i - r_i} \Bigl( 1 + (j_i - r_i) \log \theta \Bigr)^{\ell_i}\\
		= (\log \theta)^s \int_{j_1 - 1}^{j_1} \cdots \int_{j_s - 1}^{j_s} \prod_{i=1}^s \theta^{t_i} (1 + t_i \log \theta)^{\ell_i} \,\mathrm d\vec t\\
		+ O\biggl( (1 + \min\nolimits_i j_i \log \theta)^{-\delta} \prod_{i=1}^s \theta^{j_i} (1 + j_i \log \theta)^{\ell_i} \biggr).
	\end{multline*}
	Now apply the asymptotic formula for $f$ with inclusion-exclusion to obtain
	\begin{multline*}
		\sum_{\vec x \in \cal B_{\vec j}} f(\vec x)
		= \sum_{\vec r \in \{0, 1\}^s} (-1)^{|\vec r|_1} \sum_{x_i \leq \theta^{(j_i - r_i)/\alpha_i}} f(\vec x)\\
		= c_f \sum_{\vec r \in \{0, 1\}^s} (-1)^{|\vec r|_1} \prod_{i=1}^s \theta^{j_i - r_i} \Bigl( 1 + \frac{j_i - r_i}{\alpha_i} \log \theta \Bigr)^{\ell_i}\\
		+ O\biggl(
		C_f \Bigl(1 + \min\nolimits_i \frac{j_i}{\alpha_i} \log \theta\Bigr)^{-\delta} \prod_{i=1}^s \theta^{j_i} \Bigl( 1 + \frac{j_i}{\alpha_i} \log \theta \Bigr)^{\ell_i}
		\biggr).
	\end{multline*}
	Since for $t \geq 0$ and $c > 0$, one has $(1 + c t)^\nu = c^\nu (1 + t)^\nu + O_{\nu,c}((1 + t)^{\nu - 1})$, we obtain
	\begin{multline*}
		\sum_{\vec x \in \cal B_{\vec j}} f(\vec x)
		= c_f \Bigl( \prod_{i=1}^s \alpha_i^{-\ell_i} \Bigr) (\log \theta)^s \int_{j_1 - 1}^{j_1} \cdots \int_{j_s - 1}^{j_s} \prod_{i=1}^s \theta^{t_i} (1 + t_i \log \theta)^{\ell_i} \,\mathrm d\vec t\\
		+ O\biggl(
		C_f \Bigl(1 + \min\nolimits_i j_i \log \theta\Bigr)^{-\delta} \prod_{i=1}^s \theta^{j_i} (1 + j_i \log \theta)^{\ell_i}
		\biggr).
	\end{multline*}
	
	We can artificially introduce an integration into the error term by bounding
	\begin{equation}\label{eq:artificial integration}
		\theta^{j_i} (1 + j_i \log \theta)^{\ell_i} \ll \int_{j_i - 1}^{j_i} \theta^{t_i} (1 + t_i \log \theta)^{\ell_i} \,\mathrm dt_i,
	\end{equation}
	which yields the final asymptotic for the contribution of a single box $\cal B_{\vec j}$ given by
	\begin{multline*}
		\sum_{\vec x \in \cal B_{\vec j}} f(\vec x)
		= c_f \Bigl( \prod_{i=1}^s \alpha_i^{-\ell_i} \Bigr) (\log \theta)^s \int_{j_1 - 1}^{j_1} \cdots \int_{j_s - 1}^{j_s} \prod_{i=1}^s \theta^{t_i} (1 + t_i \log \theta)^{\ell_i} \,\mathrm d\vec t\\
		+ O\biggl(
		C_f \int_{j_1 - 1}^{j_1} \cdots \int_{j_s - 1}^{j_s} (1 + \min\nolimits_i t_i \log \theta)^{-\delta} \prod_{i=1}^s \theta^{t_i} (1 + t_i \log \theta)^{\ell_i} \,\mathrm d\vec t
		\biggr).
	\end{multline*}
	We now take the sum over $\vec j \in \cal P(\vec U_{\!-})$. As in \cref{lem:decomposition into boxes}, we let
	\begin{equation*}
		\gamma > \max_{1\leq l\leq k} \sum_{i=1}^s \frac{\varpi_{li}}{\alpha_i}.
	\end{equation*}
	Note that if $t_i\leq j_i$, $1\leq i\leq s$, and $\vec j \in \cal P(\vec U_{\!-})$, then $\vec t\in \cal P(\vec U_{\!-})$. On the other hand, if $\vec t\in \cal P(\vec U_{\!-}- (\gamma, \ldots, \gamma))$, then $\vec j = (\lceil t_1\rceil, \ldots, \lceil t_s \rceil) \in \cal P(\vec U_{\!-})$ by the same calculation as in \cref{lem:decomposition into boxes}.
	We deduce that
	\begin{equation*}
		\cal P(\vec U_{-}) \setminus  \bigcup_{\vec j \in \cal P(\vec U_{\!-})} \times_{i=1}^s [j_i-1,j_i] \subset	\cal P(\vec U_{-}) \setminus \cal P(\vec U_{-} - (\gamma, \ldots, \gamma)).
	\end{equation*}

	The set on the right-hand side is equal to $(\log \theta)^{-1} \Xi'$. Consequently, we obtain
	\begin{multline*}
		\sum_{\vec j \in \cal P(\vec U_{\!-})} \sum_{\vec x \in \cal B_{\vec j}} f(\vec x)
		= c_f \Bigl( \prod_{i=1}^s \alpha_i^{-\ell_i} \Bigr) (\log \theta)^s \int_{\cal P(\vec U_{-})} \prod_{i=1}^s \theta^{t_i} (1 + t_i \log \theta)^{\ell_i} \,\mathrm d\vec t\\
		+ O\biggl(
		C_f \int_{\cal P(\vec U_{-})} (1 + \min\nolimits_i t_i \log \theta)^{-\delta} \prod_{i=1}^s \theta^{t_i} (1 + t_i \log \theta)^{\ell_i} \,\mathrm d\vec t
		\biggr)\\
		+ O\biggl(
		C_f (\log \theta)^s \int_{(\log \theta)^{-1}\Xi'} \prod_{i=1}^s \theta^{t_i} (1 + t_i \log \theta)^{\ell_i} \,\mathrm d\vec t
		\biggr),
	\end{multline*}
	where the second of the two error terms arises from our completion of the main term to the entire domain $\cal P(\vec U_{-})$. Substituting $t_i = (\log \theta)^{-1} t_i'$ in all integrals and using $(\log \theta) \cal P(\vec U_{-}) = \cal P(\vec W)$, we obtain the asymptotic claimed in the lemma.
\end{proof}

The same approach can now be applied to the error term of \cref{lem:decomposition into boxes} to obtain an explicit expression for the second term.

\begin{lemma}\label{lem:boundaryboxes}
	Let $\vec W = (\log (B/X_1), \dots, \log (B/X_k))$ as in the previous lemma, $\vec U_{\!\pm}$ be defined as in \cref{lem:decomposition into boxes}, and $1< \theta \leq 2$. Let $\gamma > \max_{1\leq l\leq k} \sum_{i=1}^s \frac{\varpi_{li}}{\alpha_i}$. Then
	\begin{multline*}
		\sum_{\substack{\vec j \in \cal P(\vec U_{\!+}) \\ \vec j \notin \cal P(\vec U_{\!-})}} \sum_{\vec x \in \cal B_{\vec j}} |f(\vec x)| \ll
		C_f \int_{\Xi} \prod_{i=1}^s \mathrm e^{t_i} (1 + t_i)^{\ell_i} \,\mathrm d\vec t\\
		+ C_f (\log \theta)^{-A - s} \int_{\Xi} (1 + \min\nolimits_i t_i)^{-\delta} \prod_{i=1}^s \mathrm e^{t_i} (1 + t_i)^{\ell_i} \,\mathrm d\vec t,
	\end{multline*}
	where
	\begin{equation*}
		\Xi = \cal P(\vec W + (\gamma \log \theta, \dots, \gamma \log \theta)) \setminus \cal P(\vec W - (\gamma \log \theta, \dots, \gamma \log \theta)).
	\end{equation*}
\end{lemma}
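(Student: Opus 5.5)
The plan is to follow the proof of \cref{lem:fromboxestointegrals}, replacing the asymptotic \cref{mainassumption} by the upper bound \cref{upperboundassp} (or by \cref{mainassumption} itself when $f\ge 0$). The estimate is done box by box, and the resulting bounds are then converted into integrals over $\Xi$.

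\emph{Single-box bound.} Fix $\vec j\in\mathbb N^s$ with all $j_i>1$ and set $B_i=\theta^{(j_i-1)/\alpha_i}$. The box $\cal B_{\vec j}$ lies inside $\{\vec B\le\vec x\le\theta'\vec B\}$, where $\theta'=\theta^{1/\min_i\alpha_i}$. If $\theta'>2$, split the box into boundedly many subboxes with ratio at most $2$; since $\log\theta'\asymp\log\theta$, this only affects the constants. Applying \cref{upperboundassp} gives
\begin{equation*}
\sum_{\vec x\in\cal B_{\vec j}}|f(\vec x)|\ll C_f\Bigl((\log\theta)^s+(\log\theta)^{-A}\bigl(1+\min\nolimits_i j_i\log\theta\bigr)^{-\delta}\Bigr)\prod_{i=1}^s\theta^{j_i}\bigl(1+j_i\log\theta\bigr)^{\ell_i}.
\end{equation*}
Here $B_i^{\alpha_i}\asymp\theta^{j_i}$, and $(1+\log B_i)^{\ell_i}\asymp_{\alpha_i,\ell_i}(1+j_i\log\theta)^{\ell_i}$ because $\log B_i=(j_i-1)\log\theta/\alpha_i$ and $\theta\le2$. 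Coordinates with $j_i=1$ are handled the same way, using the box $[1,\theta^{1/\alpha_i}]$ and the fact that all factors are then $\asymp1$. For non-negative $f$, one instead applies \cref{mainassumption} with inclusion–exclusion, exactly as in the previous proof.

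\emph{Converting to integrals.} Use \cref{eq:artificial integration} in each coordinate to bound $\prod_i\theta^{j_i}(1+j_i\log\theta)^{\ell_i}$ by the integral of $\prod_i\theta^{t_i}(1+t_i\log\theta)^{\ell_i}$ over the unit cube $\prod_i[j_i-1,j_i]$. The factor $(1+\min_i j_i\log\theta)^{-\delta}$ is comparable to $(1+\min_i t_i\log\theta)^{-\delta}$ on that cube, since $t_i\ge j_i-1$ and $\log\theta\le\log2$. Summing over $\vec j\in\cal P(\vec U_{\!+})\setminus\cal P(\vec U_{\!-})$ requires the geometric step: every cube $\prod_i[j_i-1,j_i]$ with such $\vec j$ must lie in $\cal P(\vec U_{\!+})\setminus\cal P(\vec U_{\!-}-(\gamma,\dots,\gamma))$.
\begin{itemize}
\item For the upper inclusion, take $\vec t\le\vec j$ in the cube. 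Since $\vec j\in\cal P(\vec U_{\!+})$ and the weights $\varpi_{li}/\alpha_i$ are non-negative, we get $\vec t\in\cal P(\vec U_{\!+})$.
\item For the lower exclusion, suppose some $\vec t$ in the cube lies in $\cal P(\vec U_{\!-}-(\gamma,\dots,\gamma))$. Then $\vec j\le\vec t+\vec 1$, so for each $l$ we get $\sum_i\frac{\varpi_{li}}{\alpha_i}j_i\le U_{-,l}-\gamma+\sum_i\frac{\varpi_{li}}{\alpha_i}<U_{-,l}$. Hence $\vec j\in\cal P(\vec U_{\!-})$, a contradiction.
\end{itemize}
The cubes have disjoint interiors, so the sum of the cube integrals is at most the integral over $(\log\theta)^{-1}\Xi$. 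Finally, substitute $t_i=t_i'/\log\theta$. The Jacobian $(\log\theta)^{-s}$ cancels the prefactor $(\log\theta)^s$ in the first term and turns $(\log\theta)^{-A}$ into $(\log\theta)^{-A-s}$ in the second. Since $\theta^{t_i}=\mathrm e^{t_i'}$ and $1+t_i\log\theta=1+t_i'$, this is the claimed bound.

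\emph{Main obstacle.} The most delicate step is the careful matching of the box shapes with \cref{upperboundassp}: that assumption is stated for boxes $[\vec B,\theta\vec B]$ with a common ratio, while our boxes have ratios $\theta^{1/\alpha_i}$ varying by coordinate and are degenerate in coordinates with $j_i=1$. The first thing to try is to enclose each box in one box $[\vec B,\theta'\vec B]$ with common ratio $\theta'=\theta^{1/\min_i\alpha_i}$. This costs only constants, since $\log\theta'\asymp\log\theta$ and $A$ is fixed.
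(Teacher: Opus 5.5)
Your proposal is correct and follows the paper's proof essentially step for step. Both arguments enclose $\cal B_{\vec j}$ in a common-ratio box with ratio $\theta^{\kappa}$, $\kappa=\max_i\alpha_i^{-1}$, apply \cref{upperboundassp}, introduce the artificial integration \cref{eq:artificial integration}, cover the cubes by $\cal P(\vec U_{\!+})\setminus\cal P(\vec U_{\!-}-(\gamma,\dots,\gamma))$, and rescale by $\log\theta$; your remark about splitting into subboxes when $\theta^{\kappa}>2$, so that \cref{upperboundassp} literally applies, is a small detail the paper leaves implicit.
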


\begin{proof}
	By definition of $\cal B_{\vec j}$ for $\kappa = \max_i \alpha_i^{-1}$, we have
	\begin{equation*}
		\cal B_{\vec j} \subseteq \Bigl\{\,\vec x \in \mathbb R_{\geq 1}^s ~:~ \theta^{(j_i - 1)/\alpha_i} \leq x_i \leq \theta^{(j_i-1)/\alpha_i} \theta^{\kappa} ~\forall i \,\Bigr\}.
	\end{equation*}
	Hence, we can apply \cref{upperboundassp} to obtain
	\begin{equation*}
		\sum_{\vec x \in \cal B_{\vec j}} |f(\vec x)|
		\ll C_f \Bigl( (\log \theta)^{s} + (\log \theta)^{-A} (1 + \min\nolimits_i j_i \log \theta)^{-\delta} \Bigr) \prod_{i=1}^s \theta^{j_i} (1 + j_i \log \theta)^{\ell_i}.
	\end{equation*}
	As we did in \cref{eq:artificial integration}, we can artificially introduce an integration, so that summing over $\vec j$ tiles a larger integral.
	Overestimating the domain of integration as we did in \cref{lem:fromboxestointegrals} and using the inequality $\gamma > \max_{1\leq l\leq k} \sum_{i=1}^s \frac{\varpi_{li}}{\alpha_i}$ yields
	\begin{multline*}
		\sum_{\substack{\vec j \in \cal P(\vec U_{\!+}) \\ \vec j \notin \cal P(\vec U_{\!-})}} \sum_{\vec x \in \cal B_{\vec j}} |f(\vec x)|
		\ll C_f (\log \theta)^{s} \int_{\Xi_0} \prod_{i=1}^s \theta^{t_i} (1 + t_i \log \theta)^{\ell_i} \,\mathrm d\vec t\\
		+ C_f (\log \theta)^{-A} \int_{\Xi_0} (1 + \min\nolimits_i t_i \log \theta)^{-\delta} \prod_{i=1}^s \theta^{t_i} (1 + t_i \log \theta)^{\ell_i} \,\mathrm d\vec t,
	\end{multline*}
	where
	\begin{equation*}
		\Xi_0 = \cal P(\vec U_{\!+}) \setminus \cal P(\vec U_{\!-} - (\gamma, \dots, \gamma)).
	\end{equation*}
	Substituting $t_i = (\log \theta)^{-1} t_i'$ as was done in \cref{lem:fromboxestointegrals} yields the desired asymptotic.
\end{proof}

Combined, the results of the previous two lemmas yield the following corollary.

\begin{corollary}\label{lem:sumtointegral}
	Let $\vec W = (\log (B/X_1), \ldots, \log (B/X_k))$ and $1< \theta \leq 2$. Then, for $\gamma > \max_{1\leq l\leq k} \sum_{i=1}^s \frac{\varpi_{li}}{\alpha_i}$, we have
	\begin{multline*}
		\Upsilon(B; \vec X)
		= c_f \Bigl( \prod_{i=1}^s \alpha_i^{-\ell_i} \Bigr) \int_{\cal P(\vec W)}  \prod_{i=1}^s \mathrm e^{t_i} (t_i + 1)^{\ell_i} \,\mathrm d\vec t\\
		+ O\biggl(
		C_f (\log \theta)^{-A - s} \int_{\cal P(\vec W + (\gamma \log \theta,\ldots,\gamma\log\theta))} (\min\nolimits_i t_i + 1)^{-\delta} \prod_{i=1}^s \mathrm e^{t_i} (t_i + 1)^{\ell_i} \,\mathrm d\vec t
		\biggr)\\
		+ O\biggl(
		C_f \int_{\Xi} \prod_{i=1}^s \mathrm e^{t_i} (t_i + 1)^{\ell_i} \,\mathrm d\vec t
		\biggr),
	\end{multline*}
	where
	\begin{equation*}
		\Xi = \cal P(\vec W + (\gamma \log \theta, \dots, \gamma \log \theta)) \setminus \cal P(\vec W - (\gamma \log \theta, \dots, \gamma \log \theta)).
	\end{equation*}
\end{corollary}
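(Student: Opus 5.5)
The corollary is obtained by inserting \cref{lem:fromboxestointegrals} and \cref{lem:boundaryboxes} into the decomposition of \cref{lem:decomposition into boxes}, all with the same $\theta$ and $\gamma$. Then only the error terms need to be merged.

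First apply \cref{lem:decomposition into boxes}. This writes $\Upsilon(B;\vec X)$ as the sum over boxes $\cal B_{\vec j}$ with $\vec j\in\cal P(\vec U_{\!-})$, plus an error bounded by the sum of $|f|$ over the boundary boxes $\vec j\in\cal P(\vec U_{\!+})\setminus\cal P(\vec U_{\!-})$. The main sum is handled by \cref{lem:fromboxestointegrals}. Its output already contains the main term $c_f\prod_i\alpha_i^{-\ell_i}\int_{\cal P(\vec W)}\prod_i \mathrm e^{t_i}(1+t_i)^{\ell_i}\,\mathrm d\vec t$, together with two error terms. The boundary sum is bounded by \cref{lem:boundaryboxes}, which gives two further error terms with domain $\Xi$. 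If $f$ is real and non-negative, \cref{upperboundassp} follows from \cref{mainassumption}, so \cref{lem:boundaryboxes} applies in either case.

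It remains to absorb the four error terms into the two stated ones. The $\Xi'$-term from \cref{lem:fromboxestointegrals} is at most the $\Xi$-term, since $\Xi'\subseteq\Xi$: we have $\cal P(\vec W)\subseteq\cal P(\vec W+(\gamma\log\theta,\dots,\gamma\log\theta))$ by monotonicity of $\cal P(\vec\kappa)$ in $\vec\kappa$, and the integrand is positive. The term with factor $(\log\theta)^{-s}$ and domain $\cal P(\vec W)$ is dominated by the term with factor $(\log\theta)^{-A-s}$, because $\log\theta\le\log 2<1$ and $A>0$. Enlarging the domain of integration from $\cal P(\vec W)$ to $\cal P(\vec W+(\gamma\log\theta,\dots,\gamma\log\theta))$ only increases the integral. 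The second term of \cref{lem:boundaryboxes} has domain $\Xi$, which is a subset of the same enlarged polytope, so it is absorbed in the same way. The first term of \cref{lem:boundaryboxes} is exactly the stated $\Xi$-term.

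There is no real obstacle. The only point to check is that all constants are uniform in $\theta\in(1,2]$ and $B$. This holds because the implied constants in the three lemmas depend only on the fixed data $s,k,\vec\alpha,\vec\ell,\vec\varpi,\delta,A$ and on the choice of $\gamma$.
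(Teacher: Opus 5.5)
Your proposal is correct and follows the paper's argument: the corollary comes from inserting \cref{lem:fromboxestointegrals} and \cref{lem:boundaryboxes} into the decomposition of \cref{lem:decomposition into boxes}. Your bookkeeping of the four error terms correctly fills in what the paper leaves implicit; it uses $\Xi'\subseteq\Xi$, monotonicity of $\cal P(\vec\kappa)$ in $\vec\kappa$, and $(\log\theta)^{-s}\le(\log\theta)^{-A-s}$ for $1<\theta\le 2$.
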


With this corollary at hand, we reduced obtaining an asymptotic formula for $\Upsilon(B; \vec X)$ to finding one for the integral
\begin{equation}\label{eq:main integral}
	\int_{\cal P(\vec W)}  \prod_{i=1}^s \mathrm e^{t_i} (t_i + 1)^{\ell_i} \,\mathrm d\vec t.
\end{equation}
As long as decreasing any of the $\ell_i$ leads to a smaller order of growth, one will be able to give sufficiently strong bounds for the error terms.

\subsection{Upper bounds}

Before returning to estimates for integrals of the form \cref{eq:main integral} and similar, we provide a simple but very useful lemma.

\begin{lemma}\label{lem:VolumeGrowthEstimate}
	Let $A_0 \subset A_1 \subset A_2 = \mathbb R^s$ be affine subspaces of dimension $d_0 < d_1$ and $d_2 = s$ equipped with the Euclidean distance. Let $X_i \subset A_i$ be compact subsets with $\operatorname{diam}(X_0) \leq \delta_0$ and $\operatorname{dist}(X_{i-1}, X_i) \leq \delta_i$ for $i = 1, 2$. Then
	\begin{equation*}
		\operatorname{vol}(X_2) \ll_{d_0, d_1, d_2} (\delta_0 + \delta_1 + \delta_2)^{d_0} (\delta_1 + \delta_2)^{d_1 - d_0} \delta_2^{d_2 - d_1}. 
	\end{equation*}
	When $\delta_0 \geq \delta_1 \geq \delta_2$, this simplifies to $\operatorname{vol}(X_2) \ll_{d_0, d_1, d_2} \delta_0^{d_0} \delta_1^{d_1 - d_0} \delta_2^{d_2 - d_1}$.
\end{lemma}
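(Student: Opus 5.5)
The plan is to build a fibration of $X_2$ over $X_1$ and of $X_1$ over $X_0$, and to bound the volume by covering each stage with a tube around a lower-dimensional set. We may translate so that $A_0$ passes through the origin. We read the hypothesis $\operatorname{dist}(X_{i-1},X_i)\le\delta_i$ as the Hausdorff-type condition that every point of $X_i$ lies within distance $\delta_i$ of some point of $X_{i-1}$. This is the only reading under which a volume bound can hold, since otherwise $X_2$ could be arbitrarily large.

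First, since $\operatorname{diam}(X_0)\le\delta_0$ and $X_0\subset A_0$, the set $X_0$ is contained in a $d_0$-dimensional ball of radius $\delta_0$ in $A_0$. Every point of $X_1$ lies within $\delta_1$ of $X_0$ and inside $A_1$. Hence $X_1$ lies in the $\delta_1$-neighbourhood, taken within $A_1$, of a $d_0$-ball of radius $\delta_0$.

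Choose orthonormal coordinates on $A_1$ that split into the $d_0$ directions along $A_0$ and $d_1-d_0$ directions orthogonal to $A_0$. In these coordinates the neighbourhood sits inside a product: a $d_0$-ball of radius $\delta_0+\delta_1$ times a $(d_1-d_0)$-ball of radius $\delta_1$. In the same way, $X_2$ lies within $\delta_2$ of $X_1$. Splitting $\mathbb R^s$ into the directions along $A_0$, the directions in $A_1$ orthogonal to $A_0$, and the directions orthogonal to $A_1$, we get that $X_2$ is contained in a product of three balls. These have dimensions $d_0$, $d_1-d_0$ and $d_2-d_1$, and radii $\delta_0+\delta_1+\delta_2$, $\delta_1+\delta_2$ and $\delta_2$. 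The reason is that a displacement of length at most $\delta_2$ moves each orthogonal component by at most $\delta_2$.

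The volume of such a product is $\ll_{d_0,d_1,d_2}$ the product of the radii raised to the respective dimensions. This is exactly the claimed bound. The simplification when $\delta_0\ge\delta_1\ge\delta_2$ is immediate, because then $\delta_0+\delta_1+\delta_2\le 3\delta_0$ and $\delta_1+\delta_2\le 2\delta_1$.

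The main obstacle is merely bookkeeping: checking that the three coordinate projections are orthogonal and that the neighbourhood bounds propagate through each of them. Measurability is harmless, since the sets are compact. If $A_0$ is not linear, translating it to pass through the origin at the start handles this.
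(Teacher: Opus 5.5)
Your proposal is correct and is essentially the paper's argument: after choosing coordinates adapted to the flag $A_0\subset A_1\subset\mathbb R^s$ and centring at a point of $X_0$, the paper nests $X_0$, $X_1$, $X_2$ successively in products of cubes (where you use balls) of radii $\delta_0+\delta_1+\delta_2$, $\delta_1+\delta_2$ and $\delta_2$ in the three orthogonal blocks of directions. Your explicit reading of $\operatorname{dist}(X_{i-1},X_i)\le\delta_i$ as ``every point of $X_i$ lies within $\delta_i$ of $X_{i-1}$'' is the reading the paper's proof uses implicitly, and it is indeed needed for the statement to hold.
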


The statement of the lemma gives a bound for a set that is first enlarged in a low-dimensional subspace, and then enlarged in the full-dimensional space. It may be interpreted as a weak inequality version of Minkowski--Steiner formula applied twice. Using this formula would yield much more general results, but would require the introduction of intrinsic volumes. To avoid this, we give a direct and straightforward proof.

\begin{proof}
	Volumes and distances are invariant under translations and orthogonal transformations. Hence, we may choose coordinates such that the spaces $A_i$ are given by 
    \begin{align*}
		A_i = \mathbb R^{d_i} \times \{0\}^{s-d_i}.
    \end{align*}
	We bound each $X_i$ iteratively by Cartesian products of balls. Using a translation, we may assume that the origin is contained in $X_0$.
	
    First, since $X_0 \subset A_0$ and has diameter at most $\delta_0$, it is entirely contained in a $d_0$-dimensional ball of radius $\delta_0$. Fixing our origin at the centre of this bounding ball, we have
	\begin{equation*}
		X_0 \subset [-\delta_0, \delta_0]^{d_0} \times \{0\}^{s-d_0}.
	\end{equation*}
	Now $X_1 \subset A_1$ and $\operatorname{dist}(X_0, X_1) \leq \delta_1$. Since $X_0$ and $X_1$ are contained in $A_1$, we obtain
    \begin{equation*}
		X_1 \subset [-\delta_0 - \delta_1, \delta_0 + \delta_1]^{d_0} \times [-\delta_1, \delta_1]^{d_1-d_0} \times \{0\}^{s-d_1}.
    \end{equation*} 
	The same argument shows that by $\operatorname{dist}(X_1, X_2) \leq \delta_2$, we have
	\begin{equation*}
		X_2 \subset [-\delta_0 - \delta_1 - \delta_2, \delta_0 + \delta_1 + \delta_2]^{d_0} \times [-\delta_1 - \delta_2, \delta_1 + \delta_2]^{d_1-d_0} \times [-\delta_2, \delta_2]^{s-d_1}.
	\end{equation*}
    The claimed bound for the volume of $X_2$ now follows.
\end{proof}

We next provide an upper bound for the integral in \cref{eq:main integral}. For this step, we only consider the polytope
$\cal P(\log B)$.
This significantly simplifies things, as the general shape of our polytope does not depend on $B$ by~\eqref{eq:polytope-scaling}. A useful consequence is that any vertex of $\cal P(\log B)$ that is not a vertex of the maximal face $\cal F(\log B)$ is far away from the maximal face. Recall that $a = \max_{\vec x \in \cal P} |\vec x|_1$.

\begin{lemma}\label{lem:UpperBoundNoX}
	For all sufficiently large $B$ we have
	\begin{equation*}
		\int_{\cal P(\log B)} \prod_{i=1}^s \mathrm e^{t_i} (t_i + 1)^{\ell_i} \,\mathrm d\vec t
		\ll_{A'} B^a (\log B)^{-A'} 
        +  B^a \int_{\cal F(\log B)} \prod_{i=1}^s  (t_i + 1)^{\ell_i} \,\mathrm d \mu,
    \end{equation*}
	where $A' > 0$ may be chosen arbitrarily large.
    Here, $\mathrm d\mu$ is the $d$-dimensional measure on $\cal F(\log B)$ that is induced by the standard scalar product, where $d = \dim (\cal F)$. If $d=0$, then we understand the integral as
    \begin{equation*}
\int_{\cal F(\log B)} \prod_{i=1}^s (t_i + 1)^{\ell_i} \,\mathrm d \mu =\sum_{\vec t\in \cal F(\log B)} \prod_{i=1}^s (t_i + 1)^{\ell_i},
    \end{equation*} 
	which is well-defined since $d = 0$ implies that $\cal F(\log B)$ contains only a single point.
    \end{lemma}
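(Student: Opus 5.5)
The plan is to foliate $\cal P(\log B)$ by level sets of the linear function $|\vec t|_1$ and separate the part near the maximal face $\cal F(\log B)$ from the rest. Write $\prod_i \mathrm e^{t_i} = \mathrm e^{|\vec t|_1}$. Since $\max_{\cal P(\log B)}|\vec t|_1 = a\log B$, every point satisfies $\mathrm e^{|\vec t|_1}\le B^a$. Fix a large parameter $C$ (depending on $A'$) and split the domain into
\begin{equation*}
\cal P_1=\{\vec t\in\cal P(\log B): |\vec t|_1 \le a\log B - C\log\log B\}
\end{equation*}
and its complement $\cal P_2$.

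On $\cal P_1$ the exponential factor is at most $B^a(\log B)^{-C}$. The factor $\prod_i(1+t_i)^{\ell_i}$ is at most $(\log B)^{\sum_i\max(\ell_i,0)}$, because $t_i\ge0$ and $t_i\ll\log B$. The volume of $\cal P_1$ is $\ll(\log B)^s$. Hence this part contributes $\ll B^a(\log B)^{-A'}$ once $C$ is large enough.

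On $\cal P_2$, substitute $|\vec t|_1 = a\log B-u$ with $0\le u\le C\log\log B$. Then $\mathrm e^{|\vec t|_1}=B^a\mathrm e^{-u}$, and it remains to bound
\begin{equation*}
\int_0^{C\log\log B}\mathrm e^{-u}\int_{\cal P(\log B)\cap\{|\vec t|_1=a\log B-u\}}\prod_i(1+t_i)^{\ell_i}\,\mathrm d\sigma\,\mathrm du.
\end{equation*}
The key geometric input is that, because $\cal P$ is a fixed polytope, the slice at depth $u$ lies within distance $\ll u$ of $\cal F(\log B)$. Indeed, any vertex of $\cal P(\log B)$ not on $\cal F(\log B)$ has $|\vec t|_1\le (a-\eta)\log B$ for some fixed $\eta>0$. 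Writing a point of the slice as a convex combination of vertices, the total weight on non-face vertices is $\ll u/\log B$. So the point differs from a point of $\cal F(\log B)$ by $O(u)$, and this uses only $t_i\ll\log B$.

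Using this, I would parametrize a neighbourhood of $\cal F(\log B)$ as $\vec t=\vec f+\vec n$, with $\vec f\in\cal F(\log B)$ and $\vec n$ in the orthogonal complement of the face's affine span, $|\vec n|\ll u$. The transversal directions contribute volume $\ll (1+u)^{s-d-1}$ on each slice, in the spirit of \cref{lem:VolumeGrowthEstimate}. It then remains to compare $\prod_i(1+t_i)^{\ell_i}$ with $\prod_i(1+f_i)^{\ell_i}$. For $\ell_i\ge0$, use $1+t_i\le(1+f_i)(1+O(u))$, which costs only a power of $(1+u)$; this is absorbed by $\mathrm e^{-u}$.

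The main obstacle is the case $\ell_i<0$. Here $t_i$ may be smaller than $f_i$, since points near the face can sit closer to a coordinate hyperplane than their projection. I would handle this by choosing the nearby face point more carefully. Recall that $\cal F$ is not contained in a coordinate hyperplane and that we only need an upper bound. By moving $\vec t$ to the face along a direction with nonnegative coordinates where possible, I would try to find a map $\vec t\mapsto\vec f(\vec t)$ with $f_i\le t_i+O(u)$, or more precisely with $1+f_i\ll(1+u)(1+t_i)$ coordinatewise. One candidate is to increase $|\vec t|_1$ inside $\cal P$ by adding a nonnegative vector. If that fails, the fallback is to integrate the negative powers over the $O(u)$-thickening directly. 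Integrating $(1+t_i)^{\ell_i}$ over an interval of length $O(u)$ in the transversal variable is bounded by $(1+u)^{O(1)}$ times the value at the face point, after averaging.

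Either way one gets
\begin{equation*}
\int_{\text{slice}}\prod_i(1+t_i)^{\ell_i}\,\mathrm d\sigma\ll(1+u)^{O(1)}\int_{\cal F(\log B)}\prod_i(1+t_i)^{\ell_i}\,\mathrm d\mu.
\end{equation*}
Integrating against $\mathrm e^{-u}$ converges, giving the second term. When $d=0$, the face is a single point and the same argument yields the point evaluation.
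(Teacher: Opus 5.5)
Your truncation at depth $C\log\log B$, the slicing by $|\vec t|_1$, and the observation that each point of the slice $S_u$ at depth $u$ lies within $O(u)$ of $\cal F(\log B)$ are all correct. The gap is the next step. Write $g(\vec t)=\prod_i(1+t_i)^{\ell_i}$. The bound
\[
\int_{S_u} g\,\mathrm d\sigma \ll (1+u)^{O(1)}\int_{\cal F(\log B)} g\,\mathrm d\mu
\]
is asserted rather than proved. Moreover, the orthogonal parametrization you propose for it does not exist in general when $d<s-1$. For instance, let $\cal P=\{\vec t\in\RR^3_{\ge0}: t_3\le1,\ 2t_1+2t_2+t_3\le3,\ 3t_1+2t_2\le3\}$. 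Then $a=2$ and $\cal F$ is the edge from $P=(0,1,1)$ to $Q=(1,0,1)$.

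The point $\vec t=(\log B)P+u(0,1,-2)$ lies in $S_u$. Its orthogonal projection onto the line containing the face is $(-\tfrac u2,\log B+\tfrac u2,\log B)$. This is outside the face and even outside the orthant, so $(1+f_1)^{\ell_1}$ is meaningless once $u\ge2$. Your other candidate fails too: $t_2>\log B\ge f_2$ for every face point, so no nonnegative vector moves $\vec t$ onto the face. If you instead take $\vec f(\vec t)$ inside the face (the nearest point, or your convex-combination point), the pointwise comparison is fine. But a pointwise bound does not by itself control the integral, because the map $\vec t\mapsto\vec f(\vec t)$ has no Jacobian control. In this example the nearest-point map collapses a region of $S_u$ of area $\asymp u^2$ onto the single vertex $(\log B)P$.

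Two observations close the gap.

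\emph{Negative $\ell_i$ cost nothing.} You identified them as the main obstacle, but for $t_i,f_i\ge0$ one has $1+f_i\le(1+t_i)(1+|t_i-f_i|)$. Hence $(1+t_i)^{\ell_i}\le(1+f_i)^{\ell_i}(1+|t_i-f_i|)^{|\ell_i|}$ for either sign of $\ell_i$. This is the crude bound \eqref{eq:CrudeBound}, and it makes your target $1+f_i\ll(1+u)(1+t_i)$ automatic for any face point within distance $O(u)$.

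\emph{The passage to the face integral needs an averaging step.} Since $\cal F(\log B)$ is a dilate of a fixed convex polytope, $\mu(\cal F(\log B)\cap B(\vec f,1))\gg1$ uniformly in $\vec f\in\cal F(\log B)$. Also, $g$ changes by at most a factor $2^{|\vec\ell|_1}$ on such a set. So bound $g(\vec t)$ by $(1+u)^{|\vec\ell|_1}$ times the mean of $g$ over $\cal F(\log B)\cap B(\vec f(\vec t),1)$, and apply Fubini using $\{\vec t\in S_u:|\vec f(\vec t)-\vec f'|\le1\}\subseteq B(\vec f',1+O(u))$. This gives the display above with exponent $|\vec\ell|_1+s-1$, which $\mathrm e^{-u}$ absorbs.

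The paper avoids the issue differently. It triangulates $\cal P^+=\{\vec t\in\cal P:|\vec t|_1\ge a-\delta\}$ and uses barycentric coordinates in each simplex with $d+1$ vertices on $\cal F$. In these coordinates, $(\log B)\vec v_0+\sum_{j\le d}u_j(\vec v_j-\vec v_0)$ always lies in the face, and the transversal coordinates carry the factor $\exp(-\delta\sum_{j>d}u_j)$. After \eqref{eq:CrudeBound} the integral therefore factors exactly. Simplices with fewer face vertices are reduced to this case by dragging vertices onto $\cal F$. Your repaired route needs no such case distinction, while the paper's affine set-up is reused for the asymptotic in \cref{lem:simplex_computation}.
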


\begin{proof}
	We denote $a = \max_{\vec t \in \cal P} |\vec t|_1$. Since $\cal P$ has only a finite number of vertices, there exists a constant $\delta > 0$ such that no vertex $\vec v$ of $\cal P$ satisfies $|\vec v|_1 \in (a - \delta, a)$. We then cover $\cal P$ by the two sets
	\begin{equation*}
		\cal P^{-} = \{\vec t \in \cal P : |\vec t|_1 \leq a - \delta\}, \qquad\text{and}\qquad \cal P^{+} = \{\vec t \in \cal P : a - \delta \leq |\vec t|_1 \leq a\}.
	\end{equation*}
	Trivially estimating the integral over $\cal P^{-}$, we obtain
	\begin{equation}\label{eq:BoundPminus}
	\begin{aligned}
		\int_{(\log B) \cal P^{-}} \prod_{i=1}^s \mathrm e^{t_i} (t_i + 1)^{\ell_i} \,\mathrm d\vec t
		&\leq B^{a - \delta} (a(\log B) + 1)^{|\vec \ell|_1} \vol\bigl((\log B) \cal P^{-}\bigr)\\
		&\ll_{\delta} B^{a - \delta/2}.
	\end{aligned}
	\end{equation}

	By assumption on the choice of $\delta$, the set $\cal P^{+}$ is a polytope for which all vertices $\vec v$ satisfy $|\vec v|_1 \in \{a - \delta, a\}$. Hence, we can cover $\cal P^{+}$ by a finite number of simplices for which all $s + 1$ vertices satisfy the same property. Thus, it is sufficient to prove the lemma for every simplex $\cal S$ with vertices $\vec v_0, \ldots, \vec v_s$ satisfying $|\vec v_i|_1 \in \{a - \delta, a\}$ for all $0 \leq i \leq s$. If the maximal face $\cal F$ has dimension $d$, then at most $d + 1$ vertices of $\cal S$ can be contained in $\cal F$ (otherwise, the convex hull of these vertices has no volume). We split the proof into the two cases for which exactly $d + 1$ vertices of $\cal S$ are contained in $\cal F$ and for which at most $d$ vertices of $\cal S$ are contained in $\cal F$.

	\smallskip

	Assume that exactly $d + 1$ vertices of $\cal S$ are contained in $\cal F$. Without loss of generality, we may assume that these vertices are $\vec v_0, \ldots, \vec v_d$. Any point $\vec t \in \cal S$ can be written as a convex combination of the vertices of $\cal S$, that is,
	\begin{equation}\label{eq:ConvexCombination}
		\vec t = \sum_{j = 0}^s u_j \vec v_j, \qquad\text{with}\qquad u_j \geq 0 \quad\text{and}\quad \sum_{j = 0}^s u_j = 1.
	\end{equation}
	Given such a representation, we have
	\begin{equation*}
		\sum_{i = 1}^s t_i = \sum_{j = 0}^s \sum_{i=1}^s u_j v_{j,i} = a - \sum_{j = d+1}^s u_j \delta.
	\end{equation*}
	Since the points $\vec u = (u_1, \ldots, u_s) \in \Delta^s$ are in bijection with the points $\vec t \in \cal S$ via \cref{eq:ConvexCombination}, we can perform a linear change of variables to obtain the upper bound
	\begin{multline}\label{eq:ConvexCombinationBound}
		\int_{(\log B) \cal S} \prod_{i=1}^s \mathrm e^{t_i} (t_i + 1)^{\ell_i} \,\mathrm d\vec t\\
		\ll_{\cal S} B^a \int_{(\log B) \Delta^s} \Bigl(\vec 1 + (\log B) \vec v_0 + \sum_{j = 1}^s u_j (\vec v_j - \vec v_0) \Bigr)^{\vec \ell} \exp\biggl(- \sum_{j = d+1}^s u_j \delta \biggr) \,\mathrm d\vec u.
	\end{multline}
	Here, we can apply the crude upper bound
	\begin{multline*}
		\Bigl(\vec 1 + (\log B) \vec v_0 + \sum_{j = 1}^s u_j (\vec v_j - \vec v_0) \Bigr)^{\vec \ell}\\
		\ll_{\cal S} \Bigl(\vec 1 + (\log B) \vec v_0 + \sum_{j = 1}^d u_j (\vec v_j - \vec v_0) \Bigr)^{\vec \ell} \Bigl(1 + \sum_{j = d+1}^s u_j\Bigr)^{|\vec \ell|_1}.
	\end{multline*}
	This inequality is easily derived from the fact that given any $x, y \in \mathbb R$ with $x \geq 0$ and $x + y \geq 0$, we have
	\begin{equation}\label{eq:CrudeBound}
		(x + y + 1)^{\nu} \ll_\nu (x + 1)^{\nu} (|y| + 1)^{|\nu|}
	\end{equation}
	by case distinctions $\nu \gtrless 0$ and $x \gtrless 2|y|$. By additionally overestimating the integration over $\vec u \in (\log B) \Delta^s \subseteq (\log B) \Delta^d \times (\log B) \Delta^{s-d}$, we obtain
	\begin{align*}
		\int_{(\log B) \cal S} \prod_{i=1}^s \mathrm e^{t_i} (t_i + 1)^{\ell_i} \,\mathrm d\vec t
		\ll_{\cal S} B^a \int_{(\log B) \Delta^d} \Bigl(\vec 1 + (\log B) \vec v_0 + \sum_{j = 1}^d u_j (\vec v_j - \vec v_0) \Bigr)^{\vec \ell} \,\mathrm d\vec u\\
		\times \int_{(\log B) \Delta^{s-d}} (1 + |\vec u'|_1)^{|\vec \ell|_1} \exp\biggl(- |\vec u'|_1 \delta \biggr) \,\mathrm d\vec u'.
	\end{align*}
	The second integral is absolutely convergent and increasing with $B$ and may be estimated by a constant. Since the vertices $\vec v_0, \ldots, \vec v_d$ are contained in the maximal face, reversing the linear change of variables $\vec t = \vec v_0 + \sum_{j = 1}^d u_j (\vec v_j - \vec v_0)$ bijects $(\log B) \Delta^d$ onto $(\log B) (\cal S \cap \cal F)$. Hence, we obtain
	\begin{equation}\label{eq:FullDimBound}
		\int_{(\log B) \cal S} \prod_{i=1}^s \mathrm e^{t_i} (t_i + 1)^{\ell_i} \,\mathrm d\vec t
		\ll_{\cal S} B^a \int_{(\log B) \cal F} \prod_{i=1}^s (t_i + 1)^{\ell_i} \,\mathrm d\mu,
	\end{equation}
	which proves the desired bound for these simplices.

	\smallskip

	Finally, we are left to consider the cases for which $d' \leq d$ vertices of $\cal S$ are contained in $\cal F$. These pieces are expected to contribute less to the integral, as they have smaller volume close to the maximal face. To obtain an upper bound, we artificially `drag' $d-d'$ vertices of $\cal S$ onto the maximal face and obtain an upper bound by the previous case.
	Start by assuming that $\vec v_0, \ldots, \vec v_{d'}$ are contained in $\cal F$.
	We construct a new simplex $\cal S'$ with vertices $\vec w_0, \ldots, \vec w_s$ as follows. Let $\vec w_i = \vec v_i$, for $0\le i\le d'$, be the vertices of $\cal S$ that are contained in $\cal F$.
	Choose additional points $\vec w_{d'+1}, \ldots, \vec w_d$ contained in $\cal F$ such that the convex hull of $\vec w_0, \ldots, \vec w_d$ is a $d$-dimensional simplex. Since $\cal S$ is an $s$-dimensional simplex, we can choose the remaining vertices $\vec w_{d+1}, \ldots, \vec w_s$ among $\{ \vec v_{d'+1}, \ldots, \vec v_s \}$
	so that the convex hull of $\vec w_0, \ldots, \vec w_s$ is an $s$-dimensional simplex. By reordering the indices of $\vec v_i$, we may assume that $\vec w_i = \vec v_i$ for $d + 1 \leq i \leq s$. This construction ensures that
	\begin{equation}\label{eq:VWEqualRange}
		\vec v_i = \vec w_i \quad\text{for all}\quad 0 \leq i \leq d' \quad\text{and all}\quad d + 1 \leq i \leq s.
	\end{equation}
	As in \eqref{eq:ConvexCombination}, write the points in $\cal S$ as convex combinations of their vertices. We obtain the bound from~\eqref{eq:ConvexCombinationBound} with $d$ replaced by $d'$. Let $A'$ be arbitrarily large but fixed. We cover the set $(\log B) \Delta^s$ by
	\begin{align*}
		\cal U^{-} &= \Bigl\{\vec u \in \Delta^s : \sum_{j = d'+1}^s u_j \geq A'(\log \log B)(\log B)^{-1} \Bigr\} \quad \text{and}\\
		\cal U^{+} &= \Bigl\{\vec u \in \Delta^s : \sum_{j = d'+1}^s u_j \leq A'(\log \log B)(\log B)^{-1} \Bigr\}.
	\end{align*}
	Looking at the integral from \eqref{eq:ConvexCombinationBound}, we can apply the trivial bounds on $(\log B) \cal U^{-}$ as in \eqref{eq:BoundPminus} to obtain
	\begin{multline*}
		\int_{(\log B) \cal S} \prod_{i=1}^s \mathrm e^{t_i} (t_i + 1)^{\ell_i} \,\mathrm d\vec t
		\ll_{\cal S, \delta, A'} B^a (\log B)^{-\delta A'} (\log B)^{|\vec \ell|_1 + s}\\
		+ \int_{(\log B) \cal U^{+}} \Bigl(\vec 1 + (\log B) \vec v_0 + \sum_{j = 1}^s u_j (\vec v_j - \vec v_0) \Bigr)^{\vec \ell} \exp\biggl(- \sum_{j = d'+1}^s u_j \delta \biggr) \,\mathrm d\vec u.
	\end{multline*}
	For the remaining integral over $(\log B)\cal U^{+}$, we attempt to replace $\vec v$ by $\vec w$ using brute force. Notice that both
	\begin{equation*}
		\vec t = (\log B) \vec v_0 + \sum_{j = 1}^s u_j (\vec v_j - \vec v_0) \quad\text{and}\quad \vec t' = (\log B) \vec w_0 + \sum_{j = 1}^s u_j (\vec w_j - \vec w_0)
	\end{equation*}
	are contained in the polytope $(\log B) \cal P$ and therefore only have non-negative coordinates. Let $\vec u \in (\log B) \cal U^{+}$. Then by \cref{eq:VWEqualRange} the above points $\vec t$ and $\vec t'$ satisfy
	\begin{equation*}
		|\vec t - \vec t'| = \Bigl| \sum_{j = d'+1}^d u_j (\vec v_j - \vec w_j) \Bigr| \ll_{\cal S, \cal S'} \sum_{j = d'+1}^d u_j.
	\end{equation*}
	We separate this contribution and the corresponding factors $\mathrm e^{-\delta u_j}$ from the remaining integrand. Using the crude estimate from \eqref{eq:CrudeBound}, we obtain
	\begin{multline*}
		\int_{(\log B) \cal S} \prod_{i=1}^s \mathrm e^{t_i} (t_i + 1)^{\ell_i} \,\mathrm d\vec t
		\ll_{\cal S, \cal S', \delta, A'} B^a (\log B)^{-\delta A' + |\vec \ell|_1 + s}\\
		+ \int_{(\log B) \cal U^{+}} \Bigl(\vec 1 + (\log B) \vec w_0 + \sum_{j = 1}^s u_j (\vec w_j - \vec w_0) \Bigr)^{\vec \ell} \exp\biggl(- \sum_{j = d+1}^s u_j \delta \biggr)\\
		\times \Bigl(1 + \sum_{j = d'+1}^d u_j\Bigr)^{|\vec \ell|_1} \exp\biggl(- \sum_{j = d'+1}^d u_j \delta \biggr) \,\mathrm d\vec u.
	\end{multline*}
	Notice that the terms on the last line have the form $(1 + x)^{|\vec \ell|_1} \mathrm e^{-\delta x}$, which for $x \geq 0$ is bounded by a constant depending on $\delta$. We may now overestimate the domain of integration $(\log B) \cal U^{+}$ by $(\log B) \Delta^s$. The affine change of variables $\vec t' = (\log B) \vec w_0 + \sum_{j = 1}^s u_j (\vec w_j - \vec w_0)$ now bijects $(\log B) \Delta^s$ onto $(\log B) \cal S'$. Hence, we obtain
	\begin{multline*}
		\int_{(\log B) \cal S} \prod_{i=1}^s \mathrm e^{t_i} (t_i + 1)^{\ell_i} \,\mathrm d\vec t \\
		\ll_{\cal S, \cal S', \delta, A'} B^a (\log B)^{-\delta A' + |\vec \ell|_1 + s}
		+ \int_{(\log B) \cal S'} \prod_{i=1}^s \mathrm e^{t_i} (t_i + 1)^{\ell_i} \,\mathrm d\vec t.
	\end{multline*}
	Since $\cal S'$ has $d + 1$ vertices contained in the maximal face, we can apply the bound from~\eqref{eq:FullDimBound} to obtain the lemma by choosing $A'$ sufficiently large.
\end{proof}

\begin{lemma}\label{lem:basicboundaryint}
Let $\vec\ell$ be of convergent type for the face $\cal F$ and $1\leq j_0\leq s$. Then there exists $\epsilon >0$ such that for $0<Y<1$ we have the bounds

\begin{align*}
\int_{\substack{\cal F\\ |t_{j_0}|\leq Y}} \prod_{i=1}^s t_i^{\ell_i} d\mu \ll_{\epsilon} Y^{\epsilon} \quad \mbox{ and }\quad \int_{\substack{\cal F\\ |t_{j_0}|\leq Y}} \prod_{i=1}^s \left(t_i+Y\right)^{\ell_i} d\mu \ll_{\epsilon} Y^{\epsilon}.
\end{align*}

\end{lemma}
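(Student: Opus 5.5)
We reduce both bounds to the simplex computation already carried out in the proof of \cref{prop:convergence-test}. First we cover $\cal F$ by finitely many $d$-dimensional simplices spanned by vertices $\vec v(0),\dots,\vec v(d)\in\cal E$. As in that proof, each simplex is parametrized by $\vec\lambda\in\Delta^d$, and we split it into the pieces where one barycentric coordinate is at least $1/(d+1)$. After relabelling, the large coordinate belongs to $\vec v(0)$, and we order $\lambda_1\le\dots\le\lambda_d$. On each piece we have $t_j\asymp 1$ if $v(0)_j\neq 0$ and $t_j\asymp\lambda_{s(\vec v)_j}$ otherwise. It therefore suffices to bound, on each piece,
\begin{equation*}
\int_{\substack{\lambda_1\le\dots\le\lambda_d\le 1\\ t_{j_0}\le Y}}\prod_{i=1}^d\lambda_i^{L_i}\,\mathrm d\vec\lambda,\qquad L_i=\sum_{j:\,s(\vec v)_j=i}\ell_j .
\end{equation*}

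There are two cases. If $v(0)_{j_0}\neq0$, then $t_{j_0}\gg1$ on the piece, so for $Y$ smaller than a fixed constant the piece contributes nothing. For larger $Y$ the bound $\ll Y^\epsilon$ is trivial from convergence. If instead $v(0)_{j_0}=0$ with $s(\vec v)_{j_0}=m$, the condition $t_{j_0}\le Y$ becomes $\lambda_m\ll Y$, and hence $\lambda_1,\dots,\lambda_m\ll Y$ by the ordering. We integrate iteratively in $\lambda_1,\lambda_2,\dots$ as in \cref{prop:convergence-test}. Since $i+\sum_{i'\le i}L_{i'}>0$ for all $i$, the inner integral over $\lambda_1,\dots,\lambda_{m-1}$ gives $\lambda_m^{m-1+L_1+\dots+L_{m-1}}$. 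Integrating this against $\lambda_m^{L_m}$ over $\lambda_m\ll Y$ gives $Y^{m+L_1+\dots+L_m}$, and the exponent $\epsilon_m=m+\sum_{i\le m}L_i$ is positive by \eqref{eq:l-sum-condition}. The remaining integral over $\lambda_{m+1},\dots,\lambda_d$ is again bounded by convergence: truncating the lower variables only decreases a positive integrand, and the iterated exponents stay those of the convergent integral. We take $\epsilon$ to be the minimum of these $\epsilon_m$ over all pieces; this minimum is positive because there are finitely many pieces.

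For the second bound, with $(t_i+Y)^{\ell_i}$, we treat the factors according to the sign of $\ell_i$.
\begin{itemize}
\item If $\ell_i\le0$, then $(t_i+Y)^{\ell_i}\le t_i^{\ell_i}$.
\item If $\ell_i>0$, then $(t_i+Y)^{\ell_i}\ll t_i^{\ell_i}+Y^{\ell_i}\ll 1$, because $\cal F$ is bounded and $Y<1$.
\end{itemize}
Hence the integrand is at most $\prod_{i:\ell_i\le0}t_i^{\ell_i}$, which is the first integrand with $\vec\ell$ replaced by $\vec\ell^-=(\min(\ell_i,0))_i$. This replacement changes the sums in \eqref{eq:l-sum-condition}, so $\vec\ell^-$ is not automatically of convergent type, and we avoid it as follows. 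We keep the factors $t_i^{\ell_i}$ with $\ell_i>0$ only when $t_i\ge Y$, where $(t_i+Y)^{\ell_i}\ll t_i^{\ell_i}$. When $t_i<Y$ the factor is $\ll Y^{\ell_i}$, so we decompose according to which coordinates lie below $Y$.

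In the $\lambda$-coordinates, the set where $t_i<Y$ is the set where $\lambda_{s(\vec v)_i}\ll Y$. On that set the bound $Y^{\ell_i}\ll\lambda_{s(\vec v)_i}^{\ell_i}\cdot(Y/\lambda_{s(\vec v)_i})^{\ell_i}$ would go the wrong way, so we do not use it. Instead we redo the iterated integral directly with $\lambda_i$ replaced by $\max(\lambda_i,Y)$ in the exponent. Each inner integral $\int_0^{\lambda}\max(\mu,Y)^{L}\mu^{c}\,\mathrm d\mu$ with $c+L>-1$ is still $\ll\max(\lambda,Y)^{c+L+1}$. Checking this bookkeeping is the main technical step, and it yields the same exponent $Y^{\epsilon_m}$ as in the first bound.
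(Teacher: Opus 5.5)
Your route differs from the paper's: you integrate the truncated region directly on ordered simplex pieces. It breaks at one concrete step, the sentence ``the remaining integral over $\lambda_{m+1},\dots,\lambda_d$ is again bounded by convergence''.

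After integrating out $\lambda_1,\dots,\lambda_m$ under the cut-off $\lambda_m\ll Y$, you are left with $\min(\lambda_{m+1},cY)^{\epsilon_m}\prod_{i>m}\lambda_i^{L_i}$. Replacing the minimum by $(cY)^{\epsilon_m}$ does not work: the leftover integral $\int\prod_{i>m}\lambda_i^{L_i}$ need not converge, since the conditions \eqref{eq:l-sum-condition} for $i>m$ may hold only thanks to the positive amount $\epsilon_m$ you just pulled out. Keeping $\lambda_{m+1}^{\epsilon_m}$ to retain convergence loses the power of $Y$ instead.

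So the per-piece bound $\ll Y^{\epsilon_m}$ is false in general. For $d=2$, $m=1$, $L_1=5$, $L_2=-3$ (so $1+L_1>0$ and $2+L_1+L_2>0$) one gets
\begin{equation*}
\int_{\substack{\lambda_1\le\lambda_2\le 1\\ \lambda_1\le Y}}\lambda_1^{5}\lambda_2^{-3}\,\mathrm d\vec\lambda=\frac16\int_0^1\min(\lambda_2,Y)^{6}\lambda_2^{-3}\,\mathrm d\lambda_2\asymp Y^{4},
\end{equation*}
not $Y^{6}$. In general the exponent on such a piece is $\min_{m\le i\le d}\epsilon_i$, up to logarithmic factors. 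Your final minimum over all orderings happens to be small enough: moving $\vec v(m)$ to position $i$ gives an ordering with $s(\vec v)_{j_0}=i$ and the same $\epsilon_i$. But nothing in your argument shows this, and $\epsilon$ would still have to be shrunk to absorb the logarithms. The same objection applies to your claim that the second bound ``yields the same exponent $Y^{\epsilon_m}$''. There you also leave the decisive bookkeeping unchecked.

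The missing idea is to extract only a small power of $Y$. The bound $\min(\lambda_{m+1},cY)^{\epsilon_m}\le (cY)^{\eta}\lambda_{m+1}^{\epsilon_m-\eta}$ with $0<\eta<\min_i\epsilon_i$ keeps every iterated exponent admissible. The paper does exactly this in coordinate-free form. On $\{t_{j_0}\le Y\}$ one has $1\le (Y/t_{j_0})^{\epsilon}$ and $1\le 2^{\epsilon}(Y/(t_{j_0}+Y))^{\epsilon}$. Because convergent type is an open condition (\cref{rem:convergent_type_open}), $\vec\ell-\epsilon\vec e_{j_0}$ is still of convergent type for small $\epsilon$.

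The first bound is then immediate. The second reduces to the $Y$-uniform estimate $\int_{\cal F}\prod_i(t_i+Y)^{\ell_i}\,\mathrm d\mu\ll 1$ for every $\vec\ell$ of convergent type. The paper proves this by iterating $\int_{\lambda\le Z}(\lambda+Y)^{\mu}\,\mathrm d\lambda\ll (Y+Z)^{\mu+1}$ for $\mu>-1$. That is precisely your $\max(\lambda_i,Y)$ iteration, which needs $c>-1$ and not only $c+L>-1$ in your inner integral. Since $t_j+Y\asymp\max(\lambda_{s(\vec v)_j},Y)$ whatever the sign of $\ell_j$, your detour through $\vec\ell^-$ and the case split by sign is unnecessary.
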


\begin{proof}

For sufficiently small $\epsilon>0$, we estimate the first integral as 
\begin{align*}
\int_{\substack{\cal F\\ |t_{j_0}|\leq Y}} \prod_{i=1}^s t_i^{\ell_i} d\mu  \leq \int_{\cal F} \left(\frac{Y}{t_{j_0}}\right)^{\epsilon} \prod_{i=1}^s t_i^{\ell_i} d\mu \ll_\epsilon Y^{\epsilon}.
\end{align*}
Observe that the integral on the right-hand side is still convergent for $\epsilon$ sufficiently small, as being of convergent type is an open condition.\par
We start estimating the second integral by
\begin{align*}
\int_{\substack{\vec t \in \cal F\\ |t_{j_0}|\leq Y}} \prod_{i=1}^s \left(t_i+Y\right)^{\ell_i} d\mu  \ll_{\epsilon} \int_{\cal F} \left(\frac{Y}{t_{j_0}+Y}\right)^{\epsilon} \prod_{i=1}^s \left(t_i+Y\right)^{\ell_i} d\mu .
\end{align*}
As being of convergent type is an open condition, it is now sufficient to show that the integral 
\begin{align*}
 \int_{\cal F} \prod_{i=1}^s \left(t_i+Y\right)^{\ell_i} d\mu \ll 1
 \end{align*}
 is convergent if $\vec \ell$ is of convergent type. For this, let once more $\cal E$ be the set of vertices of $\cal F$ and $\vec v_0, \vec v_1, \dots, \vec v_{d} \in \cal E$ a choice of elements spanning a subsimplex of full dimension, and write $\vec w_i = \vec v_i - \vec v_0$. We need to show that for any such choice

\begin{equation*}
	 \int_{\substack{(1 - 1/(d+1)) \Delta^d\\\lambda_1\leq \ldots \leq \lambda_d}} \Bigl( \vec v_0 + \sum_{i=1}^d \lambda_i \vec w_i + Y\mathbf{1}\Bigr)^{\vec \ell} \,\mathrm d\vec \lambda \ll 1.
\end{equation*}
Hence, we need to bound the integral
\begin{align*}
J_{\vec \ell} \coloneq \int_{\substack{(1 - 1/(d+1)) \Delta^d\\\lambda_1\leq \ldots \leq \lambda_d}} \prod_{\substack{j=1\\ v_{0,j}=0}}^s \left(\lambda_{s_j}+Y\right)^{\ell_j} d\vec \lambda =  \int_{\substack{(1 - 1/d) \Delta^d\\\lambda_1\leq \ldots \leq \lambda_d}} \prod_{m=1}^d \left(\lambda_{i}+Y\right)^{\ell'_m} d\vec \lambda,
\end{align*}
where $\ell'_m = \sum_{s_j=m, v_{0,j}=0}\ell_j$.Observe that for $\mu>-1$ and $0<Y, Z \ll 1$ we have
\begin{align*}
\int_{\lambda\leq Z} (\lambda+Y)^{\mu} d\lambda \ll_\mu Y^{\mu+1}+Z^{\mu +1}\ll_\mu (Y+Z)^{\mu+1}.
\end{align*}
Inductively, we deduce that for $1\leq i\leq d $ we have
\begin{align*}
J_{\vec \ell}\ll \int_{\lambda_i\leq \ldots \leq \lambda_d} (\lambda_i+Y)^{(i-1)+\sum_{s_j\leq i-1, v_{0,j}=0}\ell_j}\prod_{m=i}^d (\lambda_m+Y)^{\ell'_m} d \lambda_i\ldots d\lambda_d
\end{align*}
which is convergent by the definition of being of convergent type.
\end{proof}

\begin{lemma}\label{lem:FaceIntegralAsymptotic}
Let $\vec\ell$ be of convergent type for the face $\cal F$ and assume that $\cal F$ is not contained in a coordinate hyperplane. Then there exists $\epsilon >0$ such that we have

\begin{align*}
\int_{\cal F(\log B)} \prod_{i=1}^s (t_i+1)^{\ell_i} d\mu = (\log B)^{d+\ell} \int_{\cal F} \prod_{i=1}^s t_i^{\ell_i} d\mu \left(1+O_\epsilon\!\left((\log B)^{-\epsilon}\right)\right).
\end{align*}

\end{lemma}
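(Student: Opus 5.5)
The plan is to rescale the face integral to $\cal F$, compare the shifted integrand with the unshifted one, and bound the discrepancy separately near and away from the coordinate hyperplanes.

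\emph{Rescaling.} Write $L=\log B$ and $Y=L^{-1}$. The substitution $\vec t = L\vec u$ maps $\cal F$ onto $\cal F(L)=L\cal F$. The $d$-dimensional measure scales by $L^d$, and
\begin{equation*}
\prod_{i}(t_i+1)^{\ell_i}=L^{\ell}\prod_i(u_i+Y)^{\ell_i}.
\end{equation*}
Hence it suffices to show
\begin{equation*}
\int_{\cal F}\prod_i (u_i+Y)^{\ell_i}\,d\mu=\int_{\cal F}\prod_i u_i^{\ell_i}\,d\mu+O(Y^{\epsilon}).
\end{equation*}
The integral on the right is finite and positive: finiteness is the convergent type assumption, and positivity holds because $\cal F$ is not contained in a coordinate hyperplane, so the integrand is positive on a set of positive measure. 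The additive error therefore converts into the multiplicative factor $1+O(L^{-\epsilon})$.

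\emph{Splitting the face.} Fix a parameter $Z=Y^{\eta}$ with $0<\eta<1$ small. Let $\cal F_{\rm bad}$ be the union over $j_0=1,\dots,s$ of the sets $\{\vec u\in\cal F: u_{j_0}\le Z\}$, and let $\cal F_{\rm good}$ be its complement in $\cal F$.

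On $\cal F_{\rm bad}$, apply \cref{lem:basicboundaryint} with $Y$ replaced by $Z$ to the unshifted integral, giving a bound $\ll Z^{\epsilon}$. For the shifted integral, note that $Y\le Z$ gives $(u_i+Y)^{\ell_i}\ll (u_i+Z)^{\ell_i}$ when $\ell_i<0$. When $\ell_i\ge0$, both factors are bounded, since $\cal F$ is bounded. The second bound of \cref{lem:basicboundaryint}, again with $Y$ replaced by $Z$, therefore gives $\ll Z^{\epsilon}$ for the shifted integral as well.

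On $\cal F_{\rm good}$ every coordinate satisfies $u_i>Z\ge Y$. There we have $(u_i+Y)^{\ell_i}=u_i^{\ell_i}\bigl(1+O(Y/u_i)\bigr)$, and $Y/u_i\le Y/Z=Y^{1-\eta}$. The difference of the two integrands on $\cal F_{\rm good}$ is therefore $\ll Y^{1-\eta}\prod_i u_i^{\ell_i}$, which integrates to $O(Y^{1-\eta})$ by convergence. Combining the two regions gives the error $O(Y^{\eta\epsilon}+Y^{1-\eta})$, and renaming $\epsilon$ completes the argument.

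\emph{Main obstacle.} The delicate step is the shifted-integral bound on $\cal F_{\rm bad}$. The point is that \cref{lem:basicboundaryint} is stated with the same parameter in the shift and in the cutoff, whereas here I need shift $Y$ and cutoff $Z$. If the comparison $(u_i+Y)^{\ell_i}\ll(u_i+Z)^{\ell_i}$ fails for $\ell_i<0$ (it actually goes the wrong way), I would fall back on reproving that lemma's inductive bound directly with shift $Y$ and cutoff $Z$. Concretely, I would rerun the simplex decomposition from \cref{prop:convergence-test}, insert the factor $(Z/(u_{j_0}+Y))^{\epsilon}$, and use that the estimate
\begin{equation*}
\int_{\lambda\le Z'}(\lambda+Y)^{\mu}\,d\lambda\ll (Y+Z')^{\mu+1}
\end{equation*}
is uniform in $Y\le 1$. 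This shows that $\int_{\cal F}\prod_i(u_i+Y)^{\ell_i'}\,d\mu$ is bounded uniformly in $Y$ for every $\vec\ell'$ in a neighbourhood of $\vec\ell$. It then yields the needed $Z^{\epsilon}$ bound, using the openness in \cref{rem:convergent_type_open}.
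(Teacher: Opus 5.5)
Your argument is correct once the fallback is used, and it follows the paper's structure: rescale to $\cal F$, split off the region where some coordinate is small, compare the integrands pointwise on the rest, and control the small-coordinate region as in \cref{lem:basicboundaryint}.

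Your suspicion about the first bound on $\cal F_{\mathrm{bad}}$ is right. For $\ell_i<0$ and $Y\le Z$ one has $(u_i+Y)^{\ell_i}\ge(u_i+Z)^{\ell_i}$, so that bound fails as written. The fallback works, and it is shorter than you make it. On $\{u_{j_0}\le Z\}$ you have $u_{j_0}+Y\le 2Z$, hence
\[
\int_{\cal F,\ u_{j_0}\le Z}\prod_i (u_i+Y)^{\ell_i}\,d\mu \ll Z^{\epsilon}\int_{\cal F}(u_{j_0}+Y)^{-\epsilon}\prod_i (u_i+Y)^{\ell_i}\,d\mu .
\]
The right-hand integral is bounded uniformly in $0<Y<1$, because $\vec\ell-\epsilon\vec e_{j_0}$ is still of convergent type. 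That uniform bound is precisely the intermediate claim proved inside \cref{lem:basicboundaryint}, so you can quote it rather than rerun the induction.

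The uniform bound does need that grouped simplex argument. Bounding $(u_i+Y)^{\ell_i}\le u_i^{\ell_i}$ for $\ell_i<0$ and discarding the factors with $\ell_i\ge 0$ can destroy convergence, since positive exponents may compensate negative ones vanishing along the same direction.

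The only real difference from the paper is the choice of cutoff. The paper takes the cutoff equal to the shift, $Z=Y=1/\log B$, so \cref{lem:basicboundaryint} applies verbatim on the bad region. It pays on the good region instead, where only $Y/t_i\le 1$ is available, via $Y t_i^{\ell_i-1}\le Y^{\epsilon}t_i^{\ell_i-\epsilon}$ and openness of the convergent-type condition. Your choice $Z=Y^{\eta}$ gives a clean uniform relative error $O(Y^{1-\eta})$ on the good region, with no openness needed there. The price is the shift/cutoff mismatch on the bad region.

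Your explicit check that $\int_{\cal F}\prod_i u_i^{\ell_i}\,d\mu>0$ is where non-containment in a coordinate hyperplane enters. The paper leaves this point implicit.
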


\begin{proof}
Recall that the property of being of convergent type is an open condition in $\vec \ell$. After scaling by a factor of $\log B$, we first compute, for $\epsilon>0$ sufficiently small, the integral
\begin{align*}
&\int_{\substack{\cal F\\ t_j\geq \frac{1}{\log B} \forall j}} \prod_{i=1}^s \left(t_i+\frac{1}{\log B}\right)^{\ell_i} d\mu = \int_{\substack{\cal F\\ t_j\geq \frac{1}{\log B} \forall j}} \prod_{i=1}^s \left(t_i^{\ell_i}+O\!\left(\ell_i\frac{1}{\log B}t_i^{\ell_i-1}\right)\right) d\mu\\
&= \int_{\substack{\cal F\\ t_j\geq \frac{1}{\log B} \forall j}} \prod_{i=1}^s \left(t_i^{\ell_i}+O\!\left(\left(\frac{1}{\log B}\right)^{\epsilon}t_i^{\ell_i-\epsilon}\right)\right) d\mu \\
&= \int_{\substack{\cal F\\ t_j\geq \frac{1}{\log B} \forall j}} \prod_{i=1}^st_i^{\ell_i} d\mu + O_\epsilon\!\left((\log B)^{-\epsilon}\right).
\end{align*}
The lemma now follows by applying \cref{lem:basicboundaryint} to the terms
\begin{align*}
\int_{\substack{\cal F\\ t_{j_0}\leq \frac{1}{\log B} }} \prod_{i=1}^st_i^{\ell_i} d\mu \quad \mbox{ and } \quad \int_{\substack{\cal F\\ t_{j_0}\leq \frac{1}{\log B} }} \prod_{i=1}^s \left(t_i+\frac{1}{\log B}\right)^{\ell_i} d\mu
\end{align*}
for some $1\leq j_0\leq s$.
\end{proof}

With the above lemma, we are able to give upper bounds for integrals over `small' subsets of the polytope $\cal P(\log B)$.

\begin{lemma}\label{lem:Hoelder}
	Let $\vec \ell$ be of convergent type for the face $\cal F$ and assume that $\cal F$ is not contained in a coordinate hyperplane. Let $\Xi \subset \cal P(\log B)$ be a measurable set. Then there exists a $\epsilon > 0$ such that for all $A'>0$, 
	\begin{equation*}
		\int_{\Xi} \prod_{i=1}^s \mathrm e^{t_i} (t_i + 1)^{\ell_i} \,\mathrm d\vec t
		\ll_{A'} \biggl(\Bigl(\frac{\vol(\Xi)}{(\log B)^d}\Bigr)^\epsilon + (\log B)^{-A'}\biggr) B^a (\log B)^{d + \ell}.
	\end{equation*}
\end{lemma}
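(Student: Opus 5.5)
The plan is to apply Hölder's inequality and then bound the resulting higher-power integral with \cref{lem:UpperBoundNoX} and \cref{lem:FaceIntegralAsymptotic}. Write $g(\vec t) = \prod_{i=1}^s \mathrm e^{t_i}(t_i+1)^{\ell_i}$. For an exponent $p>1$ close to $1$, with conjugate exponent $q$, Hölder gives
\begin{equation*}
	\int_\Xi g \,\mathrm d\vec t \le \vol(\Xi)^{1/q} \Bigl(\int_{\cal P(\log B)} g^p \,\mathrm d\vec t\Bigr)^{1/p}.
\end{equation*}
The integrand $g^p$ equals $\prod_i \mathrm e^{pt_i}(t_i+1)^{p\ell_i}$. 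Since $\mathrm e^{pt}$ is not of the shape covered by \cref{lem:UpperBoundNoX}, I will not apply that lemma directly to $g^p$. Instead I will re-run its proof with $\mathrm e^{t_i}$ replaced by $\mathrm e^{pt_i}$ and $\vec\ell$ replaced by $p\vec\ell$. Nothing in that argument depends on the exponent base except the factor $B^a$, which becomes $B^{pa}$, and the decay $\mathrm e^{-\delta u}$, which becomes $\mathrm e^{-p\delta u}$. The outcome is
\begin{equation*}
	\int_{\cal P(\log B)} g^p \ll_{A'} B^{pa}(\log B)^{-A'} + B^{pa}\int_{\cal F(\log B)} \prod_i (t_i+1)^{p\ell_i}\,\mathrm d\mu.
\end{equation*}

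Next I will use \cref{rem:convergent_type_open}: being of convergent type is an open condition, so $p\vec\ell$ is still of convergent type for $p$ sufficiently close to $1$. \cref{lem:FaceIntegralAsymptotic} then bounds the face integral by $\ll (\log B)^{d+p\ell}$. Taking the $1/p$-th power gives
\begin{equation*}
	\Bigl(\int g^p\Bigr)^{1/p} \ll B^a\bigl((\log B)^{-A'/p} + (\log B)^{d/p+\ell}\bigr).
\end{equation*}
Multiplying by $\vol(\Xi)^{1/q}$ and writing $d/p = d - d/q$, the main part becomes
\begin{equation*}
	B^a(\log B)^{d+\ell}\Bigl(\frac{\vol(\Xi)}{(\log B)^{d}}\Bigr)^{1/q},
\end{equation*}
so the claim holds with $\epsilon = 1/q$.

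For the remaining term $\vol(\Xi)^{1/q}B^a(\log B)^{-A'/p}$, I use the trivial bound $\vol(\Xi)\le \vol(\cal P(\log B))\ll(\log B)^s$. This makes it at most $B^a(\log B)^{s/q - A'/p}$. Since $A'$ is arbitrary, I can make this $\ll B^a(\log B)^{d+\ell-A''}$ for any prescribed $A''$.

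The step that needs the most care is the first one: checking that the proof of \cref{lem:UpperBoundNoX} really goes through with $\mathrm e^{pt}$ in place of $\mathrm e^{t}$. The crude bound \eqref{eq:CrudeBound}, the simplex decomposition and the $\cal U^\pm$ splitting all survive. The only change is that every occurrence of $B^a$ becomes $B^{pa}$, because $|\vec t|_1$ is still maximised on $\cal F(\log B)$. As an alternative, one could write $g^p = \mathrm e^{(p-1)|\vec t|_1}\,g_{p\vec\ell}$ with $\mathrm e^{(p-1)|\vec t|_1}\le B^{(p-1)a}$, and then apply the original lemma to $g_{p\vec\ell}=\prod_i \mathrm e^{t_i}(t_i+1)^{p\ell_i}$. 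The lemma is valid for any real exponent vector, so this route gives the same bound directly without re-running the proof.
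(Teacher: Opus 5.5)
Your proposal is correct and is essentially the paper's proof. In both, one applies Hölder's inequality with $\vec\ell$ scaled up to $p\vec\ell$ (still of convergent type by \cref{rem:convergent_type_open}) and then \cref{lem:UpperBoundNoX} and \cref{lem:FaceIntegralAsymptotic}. The only difference is cosmetic: the paper splits the weight as $\mathrm e^{|\vec t|_1} = (\mathrm e^{|\vec t|_1})^{\epsilon}(\mathrm e^{|\vec t|_1})^{1-\epsilon}$ across the two Hölder factors, so that \cref{lem:UpperBoundNoX} applies verbatim with $\vec\ell' = \vec\ell/(1-\epsilon)$. This is exactly what your ``alternative'' route achieves via $\mathrm e^{(p-1)|\vec t|_1}\le B^{(p-1)a}$, so re-running that lemma's proof with $\mathrm e^{pt_i}$ is unnecessary.
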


Using this result, we may discard the contribution of any subset of $\cal P(\log B)$ with volume $o((\log B)^d)$. In practice, we want to recover savings of size $(\log B)^{-\epsilon}$ for some $\epsilon > 0$, so any subset of volume $O((\log B)^{d - \epsilon})$ will give negligible contributions.

\begin{proof}
	By \cref{rem:convergent_type_open}, being of convergent type for $\cal F$ is satisfied by an open set. Hence, for some sufficiently small $\epsilon > 0$, the vector $\vec \ell' = \frac{1}{1-\epsilon} \vec \ell$ is of convergent type for $\cal F$. We may now apply Hölder's inequality to obtain
	\begin{equation*}
		\int_{\Xi} \prod_{i=1}^s \mathrm e^{t_i} (t_i + 1)^{\ell_i} \,\mathrm d\vec t
		\leq \biggl( \int_{\Xi} \prod_{i=1}^s \mathrm e^{t_i} \,\mathrm d\vec t \biggr)^{\epsilon} \biggl( \int_{\cal P(\log B)} \prod_{i=1}^s \mathrm e^{t_i} (t_i + 1)^{\ell'_i} \,\mathrm d\vec t \biggr)^{1 - \epsilon}.
	\end{equation*}
	The product over $\mathrm e^{t_i}$ in the first integral is trivially bounded by $B^a$, which leaves only the volume of $\Xi$. Bounding the second integral using \cref{lem:UpperBoundNoX} yields
	\begin{multline}\label{eq:SmallVolume:1}
		\int_{\Xi} \prod_{i=1}^s \mathrm e^{t_i} (t_i + 1)^{\ell_i} \,\mathrm d\vec t
		\ll_{A'} \bigl(\vol(\Xi) B^a\bigr)^{\epsilon}\\
		\times \biggl(B^a (\log B)^{-A'} + B^a \int_{\cal F(\log B)} \prod_{i=1}^s (t_i + 1)^{\ell'_i} \,\mathrm d\mu \biggr)^{1 - \epsilon},
	\end{multline}
	where $A' > 0$ may be chosen arbitrarily large. Since $\vec \ell'$ is of convergent type, we may now apply \cref{lem:FaceIntegralAsymptotic} to bound
	\begin{equation*}
		\biggl(\int_{\cal F(\log B)} \prod_{i=1}^s (t_i + 1)^{\ell'_i} \,\mathrm d\mu\biggr)^{1 - \epsilon}
		\ll_{\epsilon} (\log B)^{(1-\epsilon)d + \sum_{i=1}^s (1-\epsilon)\ell'_i},
	\end{equation*}
	and $(1-\epsilon) \ell_i' = \ell_i$ by definition. Going back to~\eqref{eq:SmallVolume:1}, we obtain
	\begin{equation*}
		\int_{\Xi} \prod_{i=1}^s \mathrm e^{t_i} (t_i + 1)^{\ell_i} \,\mathrm d\vec t
		\ll_{A'} \biggl(\Bigl(\frac{\vol(\Xi)}{(\log B)^d}\Bigr)^\epsilon + (\log B)^{-A'}\biggr) B^a (\log B)^{d + \ell},
	\end{equation*}
	where $A'$ may also be chosen arbitrarily large.
\end{proof}

As an application of \cref{lem:Hoelder} we obtain the following estimate, which will deal with one of the error terms in \cref{lem:sumtointegral}.

\begin{lemma}\label{lem:boundary_theta_bound}
	Let $\vec \alpha \in \mathbb R_{> 0}^s$ and $\vec \varpi_i \in \mathbb R_{\geq  0}^s$ for $i = 1, \dots, k$. Let $\vec \ell$ be of convergent type for the face $\cal F$ and assume that $\cal F$ is not contained in a coordinate hyperplane. Moreover, let $\vec W = (\log (B/X_1), \ldots, \log (B/X_k))$ with sufficiently large $B$ and $X_i \ll (\log B)^N$ for some $N\in \mathbb{R}_{>1}$. Write
	\begin{equation*}
		\Xi = \cal P(\vec W + (\gamma \log \theta, \dots, \gamma \log \theta)) \setminus \cal P(\vec W - (\gamma \log \theta, \dots, \gamma \log \theta)).
	\end{equation*}
	Then for every $\delta'>0$ there exists $\epsilon >0$ such that for $1<\theta \leq 1 + (\log B)^{-\delta'}$ we have
\begin{equation*}
\int_{\Xi} \prod_{i=1}^s \mathrm e^{t_i} (1 + t_i)^{\ell_i} \,\mathrm d\vec t\ll_{\gamma} B^a (\log B)^{d+\ell-\epsilon}.
\end{equation*}
    
\end{lemma}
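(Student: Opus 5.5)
The plan is to deduce the bound from \cref{lem:Hoelder}. The task is then to show that $\Xi$ sits inside a polytope of the form $\cal P(\log B+C)$ and that its volume is $O((\log B)^{d-\epsilon'})$ for some $\epsilon'>0$.

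First, we place $\Xi$ in a dilate of $\cal P$. Since $X_i\ll(\log B)^N$, each entry satisfies $W_i=\log B+O(\log\log B)$. Together with $\gamma\log\theta\le\gamma$, this gives
\begin{equation*}
\Xi\subseteq \cal P\bigl((\log B+c_1)\vec 1\bigr)\setminus\cal P\bigl((\log B-c_2\log\log B)\vec 1\bigr).
\end{equation*}
Here $c_1$ and $c_2$ are constants depending on $\gamma$, $N$ and the implied constants in $X_i\ll(\log B)^N$. However, this difference set has volume of order $(\log B)^{s-1}\log\log B$, which is too large. So we instead keep the true set $\Xi$, whose thickness in the defining inequalities is only $2\gamma\log\theta\ll(\log B)^{-\delta'}$, and use the larger polytope only to apply \cref{lem:Hoelder}. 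Applying that lemma with $B$ replaced by $B'=\mathrm e^{c_1}B$ costs only constants, since $(\log B')^{d+\ell}\asymp(\log B)^{d+\ell}$ and $B'^a\asymp B^a$. It therefore suffices to prove
\begin{equation*}
\vol(\Xi)\ll(\log B)^{d-\epsilon'}.
\end{equation*}

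Second, we bound $\vol(\Xi)$ directly. This does not hold as stated, since $\Xi$ is a thin shell around the whole boundary of a polytope of diameter $\asymp\log B$, and so has volume $\asymp(\log B)^{s-1-\delta'}$. This is not $O((\log B)^{d-\epsilon'})$ when $d<s-1$. We therefore split $\Xi$ into two parts. Let
\begin{equation*}
\Xi^-=\{\vec t\in\Xi: |\vec t|_1\le a\log B-A''\log\log B\}.
\end{equation*}
On $\Xi^-$ the integrand is at most $B^a(\log B)^{-aA''+|\vec\ell|_1}$, and $\Xi^-$ has volume $\ll(\log B)^s$. Hence the integral over $\Xi^-$ is $O(B^a(\log B)^{-A'})$ for $A''$ large, exactly as in \eqref{eq:BoundPminus} and the treatment of $\cal U^-$ in \cref{lem:UpperBoundNoX}.

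Let $\Xi^+=\Xi\setminus\Xi^-$. Because of the gap $\delta$ in the vertex values of $|\vec t|_1$ used in \cref{lem:UpperBoundNoX}, every point of $\cal P(\log B+c_1)$ with $|\vec t|_1\ge a\log B-A''\log\log B$ lies within distance $O(\log\log B)$ of the maximal face $\cal F(\log B+c_1)$. We would verify this by writing $\vec t$ as a convex combination of vertices, as in \eqref{eq:ConvexCombination}: the total weight on vertices outside $\cal F$ is then $O(\log\log B/\log B)$. Moreover, $\Xi$ lies within distance $O(\log\theta)+O(\log\log B)$ of the boundary hyperplanes. A cleaner way to organise this is to apply \cref{lem:VolumeGrowthEstimate} to the following chain:
\begin{itemize}
\item $X_0$ is the relevant part of the face, of dimension $d_0\le d$ and diameter $\delta_0\ll\log B$;
\item $X_1$ is the thin shell inside the affine hull of a facet, with $\delta_1\ll\log\log B$;
\item $X_2$ is the normal thickening, with $\delta_2\ll(\log B)^{-\delta'}$ from the $\gamma\log\theta$ perturbation.
\end{itemize}
This gives $\vol(\Xi^+)\ll(\log B)^{d}(\log\log B)^{s-1-d}(\log B)^{-\delta'}\ll(\log B)^{d-\delta'/2}$. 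Covering $\Xi^+$ by finitely many such configurations, one for each facet of $\cal P$ meeting $\cal F$, and applying \cref{lem:Hoelder} to $\Xi^+$ yields the claim with $\epsilon=\epsilon_0\delta'/2$, where $\epsilon_0$ is the exponent from \cref{lem:Hoelder}.

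The main obstacle is this geometric step. We must justify that near the maximal face, $\Xi^+$ is contained in an $O((\log B)^{-\delta'})$-neighbourhood of finitely many facet hyperplanes, intersected with an $O(\log\log B)$-neighbourhood of $\cal F(\log B)$. We also need the non-uniform shifts $W_i$ to cause no trouble. For this we would use that the $i$-th inequality shifts by at most $O(\log\log B)$. That is absorbed into $\delta_1$, while the difference between the $+$ and $-$ shifts is exactly $2\gamma\log\theta$ in every inequality, which controls $\delta_2$.
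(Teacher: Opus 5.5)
Your proposal is correct and follows essentially the same route as the paper: discard the part of $\Xi$ with $|\vec t|_1 \le a\log B - O(\log\log B)$ by the trivial bound, then bound the volume of the remainder by \cref{lem:VolumeGrowthEstimate} with $\delta_0 \asymp \log B$, $\delta_1 \asymp \log\log B$, $\delta_2 \asymp \log\theta \le (\log B)^{-\delta'}$, which gives $(\log B)^{d}(\log\log B)^{s-d-1}\log\theta$, and conclude with \cref{lem:Hoelder} applied inside a slightly enlarged dilate of $\mathcal P$. (The trivial bound on $\Xi^-$ should read $(\log B)^{-A''+|\vec\ell|_1}$ rather than $(\log B)^{-aA''+|\vec\ell|_1}$, which is harmless since $A''$ is arbitrary.)
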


\begin{proof}
    Write $a_X\log B = \max_{\vec x \in \cal P(\vec W)} |\vec x|_1$ and $\Xi'=\Xi \setminus (a_X\log B - A \log \log B) \Delta^s$. 
    By definition of $\Xi$, any point in $\Xi$ has distance $\ll \log \theta$ to a point on the boundary $\partial \cal P(\vec W)$. Thus every point in $\Xi'$ has distance $\ll \log \theta$ to a point in $\partial \cal P(\vec W) \setminus (a_X\log B - 2 A \log \log B) \Delta^s$.

	The boundary $\partial \cal P(\vec W)$ is a union of simplices contained in a hyperplane. Notice that for sufficiently large $B$, any point in $\partial \cal P(\vec W) \setminus (a_X\log B - 2 A \log \log B) \Delta^s$ has distance $\ll A \log \log B$ to a point in $\cal F(\vec W)$. Put
	\begin{equation*}
		X_0 = \cal F(\vec W),\quad X_1 = \partial \cal P(\vec W) \setminus (a_X\log B - 2 A \log \log B) \Delta^s,\quad X_2 = \Xi'.
	\end{equation*}
	Then $\operatorname{diam}(X_0) \ll \log B$, $\operatorname{dist}(X_0, X_1) \ll \log \log B$ and $\operatorname{dist}(X_1, X_2) \ll \log \theta$. Splitting $X_1$ into parts contained in a hyperplane and $X_2$ accordingly, so that the distance bounds still hold, we may use \cref{lem:VolumeGrowthEstimate} to obtain
\begin{equation*}
		\operatorname{vol}(\Xi' ) \ll_{A} (\log B)^{d} (\log \log B)^{s-d-1} (\log \theta).
	\end{equation*}

We observe that $\Xi' \subset \cal P(\log B+\gamma \log \theta \mathbf{1})$. Hence, by \cref{lem:Hoelder} there exists a $\epsilon > 0$ such that
	\begin{equation*}
		\int_{\Xi'} \prod_{i=1}^s \mathrm e^{t_i} (t_i + 1)^{\ell_i} \,\mathrm d\vec t
		\ll_{A,\gamma} \biggl(\Bigl(\frac{\vol(\Xi')}{(\log B)^d}\Bigr)^\epsilon + (\log B)^{-A}\biggr) B^a (\log B)^{d + \ell},
	\end{equation*}
	where $A > 0$ may be chosen arbitrarily large. The lemma now follows by noting that
 \begin{equation*}
\int_{(a_X\log B - 2 A \log \log B) \Delta^s} \prod_{i=1}^s \mathrm e^{t_i} (1 + t_i)^{\ell_i} \,\mathrm d\vec t\ll e^{a\log B- 2 A \log \log B} (\log B)^{s+\sum_i \max(\ell_i,0)}
\end{equation*}
as $a_X\leq a$ and as we can take $A$ to be sufficiently large.
\end{proof}

\subsection{Towards an asymptotic formula}

The following technical lemma will provide asymptotic formulas for integrals over the standard simplex, which we can use to obtain asymptotic formulas for integrals over the polytopes $\cal P(\log B)$ after cutting them into simplices. Our goal is to evaluate integrals over the whole polytope $\cal P(\log B)$, for example for the main term of our counting function, by reducing them to integrals over the maximal face.

\begin{lemma}\label{lem:AsympSimplexD1}
	Let $f \colon \mathbb R^s \to \mathbb R$ be given by $f(\vec t) = \prod_{i=1}^s (t_i + 1)^{\ell_i}$ for any $\vec \ell \in \mathbb R^s$. Let $C \in \mathbb R_{\geq 0}$, $\vec c \in \mathbb R_{>0}^m$, $\vec x \in \mathbb R^s$ and $W \in \mathbb R^{s \times m}$ be a matrix such that $W \vec t + \vec x \in \mathbb R_{\geq 0}^s$ for all $\vec t \in C \Delta^m$. Then uniformly in all $C \geq 0$, we have
	\begin{multline*}
		\int_{C \Delta^m} \exp\Bigl(-\sum_{i=1}^m c_i t_i \Bigr) f(W \vec t + \vec x) \,\mathrm d\vec t = \frac{ f(\vec x)}{c_1 \cdots c_m}\\
		+ O_{W, \vec c}\Bigl( \mathrm e^{-\min_i c_i C} (C+1)^{m-1} \max_{\vec t \in C \Delta^m} |f(W \vec t + \vec x)| \Bigr)\\
		+ O_{W, \vec c}\biggl( \int_{C \Delta^m} \exp\Bigl(-\sum_{i=1}^m c_i t_i \Bigr) \tilde f(W \vec t + \vec x) \,\mathrm d\vec t \biggr),
	\end{multline*}
	where $\tilde f(\vec t) = f(\vec t) (\min_i t_i + 1)^{-1}$.
	
\end{lemma}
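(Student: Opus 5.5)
The plan is to compare $f(W\vec t+\vec x)$ with $f(\vec x)$ pointwise. We then integrate the main piece exactly against the exponential weight, and absorb the difference into the $\tilde f$ error term.

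\emph{Main piece.} We have $\int_{\RR_{\ge0}^m} \exp(-\sum c_it_i)\,\mathrm d\vec t = (c_1\cdots c_m)^{-1}$, so
\begin{equation*}
f(\vec x)\int_{C\Delta^m}\exp\Bigl(-\sum_{i=1}^m c_it_i\Bigr)\mathrm d\vec t = \frac{f(\vec x)}{c_1\cdots c_m} - f(\vec x)\int_{\RR_{\ge0}^m\setminus C\Delta^m}\exp\Bigl(-\sum_{i=1}^m c_it_i\Bigr)\mathrm d\vec t .
\end{equation*}
On the complement $|\vec t|_1\ge C$, so the tail integral is $\ll_{\vec c} \mathrm e^{-\min_i c_i C}(C+1)^{m-1}$. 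This is the measure of the slab $|\vec t|_1\in[u,u+1]$, which is $\asymp u^{m-1}$, weighted by $\mathrm e^{-\min_i c_i u}$ and summed over $u\ge C$. Since $\vec 0\in C\Delta^m$, we have $|f(\vec x)|\le\max_{C\Delta^m}|f(W\vec t+\vec x)|$. Hence this tail is bounded by the first error term.

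\emph{Difference.} It remains to bound
\begin{equation*}
\int_{C\Delta^m}\exp\Bigl(-\sum_{i=1}^m c_it_i\Bigr)\,\bigl|f(W\vec t+\vec x)-f(\vec x)\bigr|\,\mathrm d\vec t .
\end{equation*}
Write $\vec y=W\vec t+\vec x\ge0$ and $\vec x=\vec y-W\vec t$; note $\vec x=W\vec 0+\vec x\ge 0$. Apply the mean value theorem coordinate by coordinate, changing one coordinate at a time. Because $\partial_i\log f=\ell_i/(t_i+1)$, each step costs $O(|W\vec t|\,(\min_j z_j+1)^{-1})$ times $f$ evaluated at intermediate points $\vec z$ between $\vec x$ and $\vec y$. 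By the crude bound \eqref{eq:CrudeBound}, each such intermediate value satisfies $f(\vec z)\ll f(\vec y)(1+|W\vec t|)^{|\vec\ell|_1}$. The same reasoning gives $(\min_j z_j+1)^{-1}\ll(\min_j y_j+1)^{-1}(1+|W\vec t|)$. Together these give
\begin{equation*}
\bigl|f(\vec y)-f(\vec x)\bigr| \ll_{W,\vec\ell} \tilde f(\vec y)\,(1+|\vec t|_1)^{N}
\end{equation*}
for some fixed $N$.

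\emph{Main obstacle.} The polynomial factor $(1+|\vec t|_1)^N$ has to be absorbed by the exponential weight, and this is where a naive argument fails. One cannot simply drop it, since the target error term carries only $\exp(-\sum c_it_i)$, with no spare decay. I would first try to handle it by replacing $\vec c$ with $\vec c/2$ inside the comparison. The factor $(1+|\vec t|_1)^N\exp(-\sum c_it_i/2)$ is bounded, so this yields the error term
\begin{equation*}
\int_{C\Delta^m}\exp\Bigl(-\sum_{i=1}^m c_it_i/2\Bigr)\,\tilde f(W\vec t+\vec x)\,\mathrm d\vec t .
\end{equation*}
This has halved constants, which is harmless for the later applications but does not literally match the stated error term.

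To match the statement exactly, I would split according to $|\vec t|_1\le K$ versus $|\vec t|_1> K$, with $K=K(W,\vec c)$ fixed. On the bounded region the polynomial factor is $O_K(1)$, which gives exactly the $\tilde f$ error term. On $|\vec t|_1>K$, I would bound $|f(\vec y)|+|f(\vec x)|$ crudely by $\max|f|$ and estimate the integral by the tail computation from the main-piece step. When $C\le K$ this region is empty. When $C>K$, part of it does not decay like $\mathrm e^{-\min_i c_i C}$, and there one has to show that the contribution is still dominated by one of the two error terms. I expect this last reconciliation to be the delicate point.
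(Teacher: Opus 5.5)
\textbf{There is a genuine gap at exactly the step you flag.} As written, the proposal does not prove the lemma, because neither of your two fixes closes that step. (Your treatment of the main piece is correct.)

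Replacing $\vec c$ by $\vec c/2$ proves a weaker statement. The split at $|\vec t|_1=K$ genuinely fails. Bounding the integrand by $2\max|f|$ on $K<|\vec t|_1\le C$ costs $\asymp \mathrm e^{-\min_i c_iK}\max|f|$, and this is not dominated by either error term as $C\to\infty$. For example, take $m=s=1$, $W=(-1)$, $x=C$ and $\ell_1=-10$. Then $\max_{[0,C]}|f(C-t)|=1$, attained at $t=C$, so your bound on that region has fixed size $\asymp \mathrm e^{-cK}$. Both error terms in the lemma, however, are $O(C^{-11})$.

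\textbf{The missing idea.} The $\tilde f$-error term in the statement is never small compared with $\tilde f(\vec x)$. So compare everything with $\tilde f(\vec x)$ rather than with $\tilde f(\vec y)$.

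\emph{Lower bound for the error term.} On $\min(1,C)\Delta^m\subseteq C\Delta^m$ you have $\mathrm e^{-\sum_i c_it_i}\asymp 1$. Every coordinate of $W\vec t+\vec x\ge 0$ differs from the corresponding coordinate of $\vec x\ge0$ by $O_W(1)$, so also $\tilde f(W\vec t+\vec x)\asymp_{W,\vec\ell}\tilde f(\vec x)$ there. Hence the stated error integral is $\gg \tilde f(\vec x)\min(1,C)^m$.

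\emph{Upper bound for the difference.} Apply the mean value theorem along the segment $s\mapsto W(s\vec t)+\vec x$, which stays in $\RR_{\ge0}^s$ by hypothesis. Use $|\partial_i f|\le|\ell_i|\tilde f$ and your crude comparison, now taken relative to $\vec x$. This gives $|f(W\vec t+\vec x)-f(\vec x)|\ll |\vec t|_1(1+|\vec t|_1)^{N}\tilde f(\vec x)$ for some fixed $N$. Integrating against $\mathrm e^{-\sum_i c_it_i}$ over $C\Delta^m$ gives $\ll \tilde f(\vec x)\min(1,C)^{m+1}$, since the exponential absorbs the polynomial factor. This is dominated by the lower bound above, which closes the gap with no splitting at all.

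\textbf{Comparison with the paper.} With this patch, your argument is a valid, non-inductive alternative to the paper's proof. The paper inducts on $m$. Its base case is the exact integration by parts
$\int_0^C \mathrm e^{-ct}f(\vec w t+\vec x)\,\mathrm dt = f(\vec x)/c-\mathrm e^{-cC}f(\vec wC+\vec x)/c + c^{-1}\int_0^C\mathrm e^{-ct}\mathrm D_{\vec w}f(\vec wt+\vec x)\,\mathrm dt$,
which incurs no polynomial loss. The price is a somewhat delicate induction step: the inner integration has to be re-introduced to control the term $\tilde f(\vec w_1t_1+\vec x)$. Your route avoids that bookkeeping. It relies instead on the error term already containing the contribution from near $\vec t=\vec 0$, so any loss polynomial in $\vec t$ is harmless.
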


\begin{proof}
	We prove the lemma by induction on $m$. The base case $m = 1$ is obtained by an application of integration by parts, which yields
	\begin{equation*}
		\int_0^C \mathrm e^{-c t} f(\vec w t + \vec x) \,\mathrm dt
		= \Bigl[ -\frac{1}{c} \mathrm e^{-ct} f(\vec w t + \vec x) \Bigr]_0^C + \frac{1}{c} \int_0^C \mathrm e^{-ct} \mathrm D_{\vec w} f(\vec w t + \vec x) \,\mathrm dt.
	\end{equation*}
	The directional derivative $\mathrm D_{\vec w} f(\vec w t + \vec x)$ is bounded by $\tilde f(\vec w t + \vec x)$ up to a constant depending on $\vec \ell$ and $\vec w$. This immediately yields the desired result for $m = 1$.

	For the induction step, we assume that the lemma holds for $m - 1$ and prove it for $m$. Since the range of integration $C \Delta^m$ is symmetric in the $m$ variables, we may assume without loss of generality that $c_1 \leq c_2 \leq \cdots \leq c_m$. We integrate over $t_1$ last to see
	\begin{equation*}
		\int_0^C e^{-c_1t_1} \int_{(C - t_1) \Delta^{m-1}} \exp\Bigl(-\sum_{i=2}^m c_i t_i\Bigr) f(W \vec t + \vec x) \,\mathrm d(t_2,\ldots,t_m) \,\mathrm dt_1.
	\end{equation*}
	Using the induction hypothesis, we obtain
	\begin{multline}\label{eq:AsympSimplex:1}
		\int_{(C - t_1) \Delta^{m-1}} \exp\Bigl(-\sum_{i=2}^m c_i t_i\Bigr) f(W \vec t + \vec x) \,\mathrm d(t_2,\ldots,t_m)\\
		= \frac{f(\vec w_1 t_1 + \vec x)}{c_2 \cdots c_m} + O_{W, \vec c}( \mathrm e^{-c_2(C - t_1)} (C - t_1)^{m-2} \max_{\vec t \in (C - t_1) \Delta^{m-1}} |f(W \vec t + \vec x)| )\\
		+ O_{W, \vec c}( \int_{(C - t_1) \Delta^{m-1}} \exp\Bigl(-\sum_{i=2}^m c_i t_i\Bigr) \tilde f(W \vec t + \vec x) \,\mathrm d(t_2,\ldots,t_m) ),
	\end{multline}
	where $\vec w_1$ is the first column of $W$.

	After integrating over $t_1$, the first term may be resolved by an application of the base case $m = 1$. The only problematic term is the second error term
	\begin{equation*}
		\int_0^C e^{-c_1t_1} \tilde f(\vec w_1 t_1 + \vec x) \,\mathrm dt_1,
	\end{equation*}
	for which we need to artificially reintroduce the integration over $t_2, \ldots, t_m$.
	Since $\tilde f(\vec t) \ll f(\vec t) ((t_1 + 1)^{-1} + \cdots + (t_m + 1)^{-1})$, we can apply the induction hypothesis for each of the $m$ terms in the sum. Rearranging the resulting terms and estimating $\tilde{\tilde f}(\vec t) \leq \tilde f(\vec t)$ yields
	\begin{multline*}
		\tilde f(\vec w_1 t_1 + \vec x) \ll \int_{(C - t_1) \Delta^{m-1}} \exp\Bigl(-\sum_{i=2}^m c_i t_i\Bigr) \tilde f(W \vec t + \vec x) \,\mathrm d(t_2,\ldots,t_m)
		\\
		+ \mathrm e^{-c_2(C - t_1)} (C - t_1)^{m-2} \max_{\vec t \in (C - t_1) \Delta^{m-1}} |f(W \vec t + \vec x)|.
	\end{multline*}
	Hence, we find
	\begin{multline*}
		\int_0^C e^{-c_1t_1} \tilde f(\vec w_1 t_1 + \vec x) \,\mathrm dt_1
		\ll \int_{C \Delta^m} \exp\Bigl(-\sum_{i=1}^m c_i t_i\Bigr) \tilde f(W \vec t + \vec x) \,\mathrm d\vec t\\
		+ O_{W, \vec c}( \mathrm e^{-c_1 C} C^{m-1} \max_{\vec t \in C \Delta^m} |f(W \vec t + \vec x)| ),
	\end{multline*}
    which concludes the proof by returning to \cref{eq:AsympSimplex:1}.
\end{proof}

\begin{lemma}\label{lem:simplex_computation}
	Let $\vec \varpi, \vec \alpha$ and $\vec \ell$ be such that $\vec\ell$ is of convergent type for $\cal F$. Let $S \subset \cal P$ be a simplex, and write $d = \dim \cal F$.
	\begin{enumerate}[leftmargin=*, label=\normalfont(\roman*)]
	\item\label{enum:case-le-d} If at most $d$ vertices of $S$ are contained in $\cal F$, then
		\begin{equation*}
			\int_{(\log B) S} \prod_{i=1}^s \mathrm e^{t_i} (t_i + 1)^{\ell_i} \,\mathrm d\vec t \ll_{S} B^a (\log B)^{d + \ell - \delta},
		\end{equation*}
		where $\delta > 0$ is a constant that depends on $S$.
	\item\label{enum:d+1-case} If exactly $d+1$ vertices of $S$ are contained in $\cal F$, let $\cal F_S = \cal F \cap S$ be the face of $S$ contained in $\cal F$. Then there exists a constant $C_{S}$ not depending on $\vec \ell$ such that
		\begin{align*}
			\int_{(\log B) S} \prod_{i=1}^s \mathrm e^{t_i} (t_i + 1)^{\ell_i} \,\mathrm d\vec t &= C_{S} B^a \int_{(\log B)\cal F_S} \prod_{i=1}^s (t_i + 1)^{\ell_i} \,\mathrm d\mu\\
			& \qquad \qquad\qquad+ O_{S}(B^a (\log B)^{d + \ell - \delta}),
		\end{align*}
		where $\delta > 0$ is a constant that depends on $S$.
	\item\label{enum:C_S} Let $\vec v_0, \ldots, \vec v_s$ be the vertices of $S$ and $\vec v_0, \ldots, \vec v_d$ those contained in $\cal F$. Put $\vec w_i = \vec v_i - \vec v_0$, and let $\operatorname{vol}_d(\cal F_S)$ be the $d$-dimensional volume of $\cal F_S$. Then the constant $C_S$ in \ref{enum:d+1-case} is 
		\begin{align*}
			C_S &= \frac{\det(\vec w_1, \ldots, \vec w_s)}{\sqrt{\det((\vec w_1, \ldots, \vec w_d)^t (\vec w_1, \ldots, \vec w_d))}} \prod_{i=d+1}^s \Bigl(-\sum_{j=1}^s w_{ij}\Bigr)^{-1}\\
			&= \frac{s! \operatorname{vol}(S)}{d! \operatorname{vol}_d(\cal F_S)} \prod_{i=d+1}^s \bigl(a - |\vec v_i|_1\bigr)^{-1}.
		\end{align*}

	\end{enumerate}
\end{lemma}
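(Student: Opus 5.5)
The plan is to parametrise $(\log B)S$ by barycentric coordinates, exactly as in the proof of \cref{lem:UpperBoundNoX}. I order the vertices so that $\vec v_0,\dots,\vec v_{d'}$ are the vertices lying in $\cal F$, with $d'\le d$ in case \ref{enum:case-le-d} and $d'=d$ in case \ref{enum:d+1-case}. Writing $\vec t=(\log B)\vec v_0+\sum_{j\ge1}u_j\vec w_j$ with $\vec u\in(\log B)\Delta^s$, we get $\mathrm d\vec t=|\det(\vec w_1,\dots,\vec w_s)|\,\mathrm d\vec u$ and $|\vec t|_1=a\log B-\sum_{j>d'}c_ju_j$, where $c_j=a-|\vec v_j|_1>0$. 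The integrand therefore becomes $B^a\exp(-\sum_{j>d'}c_ju_j)\,f(\cdots)$ with $f(\vec t)=\prod(t_i+1)^{\ell_i}$.

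For \ref{enum:case-le-d}, I redo the dragging argument from \cref{lem:UpperBoundNoX}, but keep track of the gain. Let $\cal U^+$ be the region where $\sum_{j>d'}u_j\le A'\log\log B$; the complement contributes $O(B^a(\log B)^{-A'})$. On $\cal U^+$, the variables $u_{d'+1},\dots,u_d$ range over a box of volume $(\log\log B)^{d-d'}$ rather than $(\log B)^{d-d'}$. The idea is that the image of $\cal U^+$ in $\vec t$-space lies within distance $O(\log\log B)$ of the $d'$-dimensional set $(\log B)\,\mathrm{conv}(\vec v_0,\dots,\vec v_{d'})\subset(\log B)\cal F$. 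Hence its volume is $\ll(\log B)^{d'}(\log\log B)^{s-d'}$ by \cref{lem:VolumeGrowthEstimate}. Applying \cref{lem:Hoelder} to this image then gives a bound $B^a(\log B)^{d+\ell}\cdot\bigl((\log B)^{d'-d}(\log\log B)^{s}\bigr)^\epsilon$, which is the claim with any $\delta<\epsilon$.

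For \ref{enum:d+1-case}, I integrate out $u_{d+1},\dots,u_s$ for fixed $(u_1,\dots,u_d)$ using \cref{lem:AsympSimplexD1}, with $m=s-d$, $C=\log B-\sum_{j\le d}u_j$ and $\vec x=(\log B)\vec v_0+\sum_{j\le d}u_j\vec w_j\in(\log B)\cal F_S$. The main term is $f(\vec x)/\prod_{j>d}c_j$. Integrating it over $(u_1,\dots,u_d)\in(\log B)\Delta^d$ and converting to the measure $\mu$ on $(\log B)\cal F_S$ gives a Gram-determinant factor $\sqrt{\det(W_d^tW_d)}$. Together with the full Jacobian and the $c_j$, this produces $C_S$ as in \ref{enum:C_S}. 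The second expression for $C_S$ follows from $\operatorname{vol}(S)=|\det|/s!$, $\operatorname{vol}_d(\cal F_S)=\sqrt{\det(W_d^tW_d)}/d!$, and $-\sum_iw_{ji}=a-|\vec v_j|_1$. In particular $C_S$ is independent of $\vec\ell$.

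It remains to bound the two error terms of \cref{lem:AsympSimplexD1}. The first carries $\mathrm e^{-cC}$, so it is negligible unless $C\ll\log\log B$. That is, it only matters for $\vec x$ within $O(\log\log B)$ of the relative boundary of $(\log B)\cal F_S$, a region of $d$-dimensional measure $\ll(\log B)^{d-1}\log\log B$. I then use a Hölder step on the face, with $\vec\ell$ replaced by $\vec\ell/(1-\epsilon)$ as in \cref{lem:Hoelder}, together with \cref{lem:FaceIntegralAsymptotic}. Some care is needed because $\max|f|$ over the fibre may exceed $f(\vec x)$. I would handle this with \eqref{eq:CrudeBound}, which costs only a power of $C+1$.

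The second error term, with $\tilde f=f\cdot(\min_it_i+1)^{-1}$, is the main obstacle. After integrating out the exponential, it reduces to the face integral of $f(\vec x)(\min_ix_i+1)^{-1}$ up to harmless factors. The plan is to split according to whether $\min_i x_i\ge(\log B)^{1-\eta}$. Where it is, we gain $(\log B)^{-(1-\eta)}$. Where it is not, some coordinate $t_{j_0}\le(\log B)^{-\eta}$ after rescaling $\cal F$ to unit size, and \cref{lem:basicboundaryint} with $Y=(\log B)^{-\eta}$ gives a saving $Y^\epsilon$. This uses that $\cal F$ is not contained in a coordinate hyperplane and that $\vec\ell$ is of convergent type.
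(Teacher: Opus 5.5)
Your proposal is correct. Parts (i) and (iii) are essentially the paper's arguments: the barycentric change of variables, cutting off where $\sum_{j>d'}u_j\gg\log\log B$, a volume bound for the rest, \cref{lem:Hoelder}, and $C_S=J_SJ_S'/\prod_{j>d}c_j$. There is one indexing slip in (i). Since $\vec v_0,\dots,\vec v_{d'}$ are $d'+1$ vertices, case (i) means $d'\le d-1$, not $d'\le d$; with $d'=d$ your bound gives no saving. The case with no vertex in $\cal F$ is trivial, because then $\max_S|\vec t|_1<a$.

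In (ii) you take a somewhat different route from the paper.

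\textbf{Truncation.} The paper first truncates to $|\vec u'|_1\le \log B-A\log\log B$. This makes the first error term of \cref{lem:AsympSimplexD1} trivially small, since $\mathrm e^{-cC}\le(\log B)^{-cA}$ there and a crude $(\log B)^{|\vec\ell|_1}$ bound on $f$ suffices. The price is that it must bound the discarded layer $\cal U^*$ (and $\cal U^-$) by volume plus \cref{lem:Hoelder}. It then completes the truncated face integral back to $(\log B)\Delta^d$ with a second Hölder step.

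You instead apply \cref{lem:AsympSimplexD1} on every fibre, so the main term lands directly on the full face. The boundary layer then enters only through $E_1$. Your price is the comparison $\max_{\text{fibre}} f\ll f(\vec x)(C+1)^{|\vec\ell|_1}$ via \eqref{eq:CrudeBound}, after which one Hölder step on the face suffices. This is slightly more economical and equally rigorous.

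\textbf{The $E_2$ term.} The paper returns to $\vec t$-space and bounds the weight by $(\min_i t_i+1)^{-\delta}$. It then uses openness of convergent type for $\vec\ell-\delta\vec e_i$ together with \cref{lem:UpperBoundNoX} and \cref{lem:FaceIntegralAsymptotic}. Your reduction $E_2(\vec u')\ll f(\vec x)(\min_i x_i+1)^{-1}$, via \eqref{eq:CrudeBound} on the fibre, is valid. However, the split on $\min_i x_i\gtrless(\log B)^{1-\eta}$ is unnecessary: the bound $(\min_i x_i+1)^{-1}\le\sum_i (x_i+1)^{-\delta}$ plus the same openness argument on the face finishes at once.

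If you keep the split, note that \cref{lem:basicboundaryint} does not literally apply. After rescaling, your integrand is $\prod_i(t_i+1/\log B)^{\ell_i}$ on $\{t_{j_0}\le Y\}$ with $Y=(\log B)^{-\eta}\ne 1/\log B$. Its proof does cover this case. On that region $Y/(t_{j_0}+1/\log B)\ge 1/2$, and the bound $\int_{\cal F}\prod_i (t_i+Z)^{\ell_i'}\,\mathrm d\mu\ll 1$ established there holds uniformly in $0<Z<1$ for $\vec\ell'=\vec\ell-\epsilon\vec e_{j_0}$.
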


\begin{proof}
	We begin with some set-up steps that are useful for both parts of the lemma. Write $\vec v_0, \ldots, \vec v_s$ for the vertices of $S$ and assume without loss of generality that the first $m+1$ vertices $\vec v_0, \ldots, \vec v_m$ are those that are contained in $\cal F$. Write
	\begin{equation*}
		\vec w_i = \vec v_i - \vec v_0 \quad (1 \leq i \leq s) \quad\text{and}\quad c_i = -\sum_{j=1}^s w_{ij} \quad (m < i \leq s).
	\end{equation*}
	When at least one vertex $\vec v_0$ of $S$ is contained in $\cal F$, we have $|\vec v_0|_1 = a$ and $|\vec v_i|_1 < a$, so $c_i = |\vec v_0|_1 - |\vec v_i|_1 > 0$ for all $i > m$.
	The affine map 
	\begin{equation*}
		F\colon\mathbb R^s \to \mathbb R^s, \quad \vec u \mapsto (\log B) \vec v_0 + \sum_{i=1}^s u_i \vec w_i
	\end{equation*}
	has Jacobian determinant $J_S = |\det(\vec w_1, \ldots, \vec w_s)|$ and sends $(\log B) \Delta^s$ bijectively onto $(\log B) S$. A change of variables $\vec t = F(\vec u)$ yields
	\begin{equation}\label{eq:SetupForm}
		I \coloneq \int_{(\log B) S} \prod_{i=1}^s \mathrm e^{t_i} (t_i + 1)^{\ell_i} \,\mathrm d\vec t
		= J_S B^a \int_{(\log B) \Delta^s} \prod_{i=m+1}^s\mathrm e^{-c_i u_i} \bigl( F_i(\vec u) + 1 \bigr)^{\ell_i} \,\mathrm d\vec u.
	\end{equation}

	\subsubsection*{Proof of \ref{enum:case-le-d}} By assumption, we have $m \leq d - 1$. Let $A > 0$ be a constant to be chosen later and split $(\log B) \Delta^s = \cal U^{-} \cup \cal U^{+}$ with
	\begin{align*}
		\cal U^{-} &= \Bigl\{ \vec u \in (\log B) \Delta^s\ :\ c_{m+1} u_{m+1} + \cdots + c_s u_s \geq A \log \log B \Bigr\}\ \text{and}\\
		\cal U^{+} &= \Bigl\{ \vec u \in (\log B) \Delta^s\ :\ c_{m+1} u_{m+1} + \cdots + c_s u_s < A \log \log B \Bigr\}.
	\end{align*}
	On $\cal U^{-}$ we have $|\vec t|_1 \leq a(\log B) - A \log \log B$, so $\prod_{i=1}^s \mathrm e^{t_i} \leq B^a (\log B)^{-A}$ and $(F(\vec u) + \vec 1)^{\vec \ell} \ll (\log B)^{|\vec \ell|_1}$, while $\mathrm{vol}(\cal U^{-}) \leq \mathrm{vol}((\log B) \Delta^s) \ll (\log B)^s$. Hence, the contribution of $\cal U^{-}$ to \eqref{eq:SetupForm} is
	\begin{multline*}
		J_S B^a \int_{\cal U^{-}} \prod_{i=m+1}^s\mathrm e^{-c_i u_i} \bigl( F_i(\vec u) + 1 \bigr)^{\ell_i} \,\mathrm d\vec u
		\ll B^a (\log B)^{-A} (\log B)^{|\vec \ell|_1} (\log B)^s\\
		= B^a (\log B)^{s + |\vec \ell|_1 - A},
	\end{multline*}
	which is $\ll B^a (\log B)^{d + \ell - \delta}$ provided that $A > s - d + \delta + |\vec \ell|_1 - \ell$.

	The contribution of $\cal U^{+}$ can be handled using only its volume. By construction, we have
	\begin{equation*}
		\cal U^{+} \subset (\log B) \Delta^m \times \prod_{i=m+1}^s [0, c_i^{-1} A \log \log B],
	\end{equation*}
	which yields $\operatorname{vol}(\cal U^{+}) \ll_{\vec c,A} (\log B)^m (\log \log B)^{s-m}$. Hence, the volume of $F(\cal U^{+})$ is bounded by the same function, with the implied constant multiplied by $J_S^{-1}$. The integration corresponding to $\cal U^{+}$ is now easily bounded using \cref{lem:Hoelder}, which gives
	\begin{equation}\label{eq:SimplexComputation:Uplus}
		\int_{F(\cal U^{+})} \prod_{i=1}^s \mathrm e^{t_i} (t_i + 1)^{\ell_i} \,\mathrm d\vec t \ll \Bigl((\log B)^{\delta(m-d)} + (\log B)^{-A} \Bigr) B^a (\log B)^{d + \ell}
	\end{equation}
	for some $\delta > 0$. Together with the bound on $\cal U^{-}$, this completes the proof of \ref{enum:case-le-d}.

	\subsubsection*{Proof of \ref{enum:d+1-case}} Now $m = d$ and $S$ has exactly $d+1$ vertices $\vec v_0, \ldots, \vec v_d$ that are contained in $\cal F$. Similarly to the previous part, we split up the domain $(\log B) \Delta^s$. This time, we introduce three different regions. The `new' region is used to make sure that the errors obtained by an application of \cref{lem:AsympSimplexD1} are sufficiently small. Write $\vec u' = (u_1, \ldots, u_d)$ for the first $d$ coordinates of $\vec u$. Define
	\begin{align*}
		\cal U^{+}&= \Bigl\{ \vec u \in (\log B) \Delta^s : |\vec u'|_1 \leq \log B - A \log \log B \Bigr\},\\
		\cal U^{*}&= \Bigl\{ \vec u \in (\log B) \Delta^s : |\vec u'|_1 > \log B - A \log \log B,\ u_i \leq c_i^{-1} A \log \log B\ \forall i > d \Bigr\},\\
		\cal U^{-}&= \Bigl\{ \vec u \in (\log B) \Delta^s : u_i > c_i^{-1} A \log \log B \text{ for some } i > d \Bigr\}.
	\end{align*}
	Then $(\log B) \Delta^s = \cal U^{+} \cup\, \cal U^{*} \cup\, \cal U^{-}$.

	Only $\cal U^{+}$ will be evaluated asymptotically, while $\cal U^{*}$ and $\cal U^{-}$ will simply be bounded.
	For the last of these, the argument used in part~\ref{enum:case-le-d} to bound the contribution of $\cal U^{-}$ works again, provided that $A$ is chosen sufficiently large.
	The estimate for $\cal U^{*}$ is slightly more delicate. Notice that all points in $\cal U^{*}$ are at distance $\ll_{\vec c, A} \log \log B$ from a point satisfying $u_1 + \cdots + u_d = \log B$ and $u_i = 0$ for $i > d$. Putting
	\begin{equation*}
		X_0 = \{ \vec u \in (\log B) \Delta^s : u_1 + \cdots + u_d = \log B,\ u_i = 0\ \forall i > d \},\
	\end{equation*}
		$X_1 = X_0$, and $X_2 = \cal U^{*}$, we have $\operatorname{dist}(X_0, X_1) = 0$ and $\operatorname{dist}(X_1, X_2) \ll_{\vec c, A} \log \log B$. By definition $X_0$ is contained in a $(d-1)$-dimensional affine subspace. Hence, we can apply \cref{lem:VolumeGrowthEstimate} with $\delta_0 = \log B$, $\delta_1 = 0$ and $\delta_2 = \log \log B$ to obtain
	\begin{equation*}
		\operatorname{vol}(\cal U^{*}) \ll_{\vec c, A} (\log B)^{d-1} (\log \log B)^{s-d+1}.
	\end{equation*}
	A sufficient upper bound for the integral over $\cal U^*$ is now obtained via \cref{lem:Hoelder} exactly like the estimate in~\eqref{eq:SimplexComputation:Uplus}.

	Finally, we turn to the region $\cal U^{+}$, which will contribute to the main term. Start by decomposing
	 \begin{multline}\label{eq:SimplexComputation:main}
		 J_S B^a \int_{\cal U^{+}} \prod_{i=d+1}^s\mathrm e^{-c_i u_i} \bigl( F_i(\vec u) + 1 \bigr)^{\ell_i} \,\mathrm d\vec u\\
		 = J_S B^a \int_{(\log B - A \log \log B) \Delta^d} \int_{(\log B - |\vec u'|_1) \Delta^{s-d}} \prod_{i=d+1}^s\mathrm e^{-c_i u_i} \bigl( F_i(\vec u) + 1 \bigr)^{\ell_i} \,\mathrm d\vec u'' \,\mathrm d\vec u'.
	 \end{multline}
	Fixing $\vec u' = (u_1, \ldots, u_d) \in (\log B - A \log \log B) \Delta^d$ to estimate the inner integral, we apply \cref{lem:AsympSimplexD1}, with the matrix $W$ whose columns are $\vec w_{d+1}, \ldots, \vec w_s$, $f(\vec x) = (\vec x + \vec 1)^{\vec \ell}$, and $\vec x = (\log B)\vec v_0 + \sum_{i=1}^d u_i \vec w_i$. Thus
	\begin{multline}\label{eq:InnerAsymp}
		\int_{(\log B - |\vec u'|_1) \Delta^{s-d}} \exp\Bigl(-\sum_{i=d+1}^s c_i t_i\Bigr) f(W\vec t + \vec x) \,\mathrm d \vec t
		= \frac{f(\vec x)}{c_{d+1} \cdots c_s}\\
		+ O_{W, \vec c}\Bigl( \mathrm e^{-\min_{d<i\leq s} c_i (\log B - |\vec u'|_1)} (\log B)^{s-d-1} \max_{\vec t \in (\log B) \Delta^{s-d}} |f(\vec t)| \Bigr)\\
		+ O_{W, \vec c}\Bigl( \int_{(\log B - |\vec u'|_1) \Delta^{s-d}} \mathrm e^{-\sum_{i=d+1}^s c_i u_i} \tilde f(W\vec t + \vec x) \,\mathrm d\vec t \Bigr),
	\end{multline}
	where $\tilde f(\vec x) = f(\vec x)(\min_{d<i\leq s} x_i + 1)^{-1}$. We denote the two error terms in \eqref{eq:InnerAsymp} by $E_1(\vec u')$ and $E_2(\vec u')$, respectively.

	We begin with a look at $E_1$. Without loss of generality, we may assume that $c_{d+1} = \min_{d<i\leq s} c_i > 0$.
	On the main region, $\log B - |\vec u'|_1 \geq A \log \log B$, so $\mathrm e^{-c_{d+1}(\log B - |\vec u'|_1)}$ is at most $(\log B)^{-c_{d+1} A}$, and $\max_{\vec u''} |F(\vec u', \vec u'')| \ll (\log B)^{|\vec \ell|_1}$.
	Integrating $E_1(\vec u')$ over $\vec u' \in (\log B - A \log \log B) \Delta^d$, we find that the contribution of $E_1$ to \cref{eq:SimplexComputation:main} is
	\begin{equation*}
		\ll B^a (\log B)^{d} \cdot (\log B)^{-c_{d+1} A} (\log B)^{s-d-1} (\log B)^{|\vec \ell|_1} \ll B^a (\log B)^{d + \ell - \delta},
	\end{equation*}
	provided $A$ is chosen sufficiently large. 

	To estimate the contribution of $E_2$, we shall use \cref{lem:Hoelder}. Overestimating the integration over $\vec u'$ by the full simplex, the contribution of $E_2$ to the integral in \eqref{eq:SimplexComputation:main} is
	\begin{equation*}
		\ll \int_{(\log B) \Delta^s} \mathrm e^{-\sum_{i=d+1}^s c_i u_i} \tilde f(F(\vec u)) \,\mathrm d\vec u.
	\end{equation*}
	Reversing the change of variables made in \cref{eq:SetupForm}, this contribution is bounded by
	\begin{equation*}
		\int_{(\log B) S} \prod_{i=1}^s \mathrm e^{t_i} (t_i + 1)^{\ell_i} (\min\nolimits_i t_i + 1)^{-\delta} \,\mathrm d\vec t,
	\end{equation*}
	for a small $\delta > 0$ (since the weight $(\min_{d<i\leq s} u_i + 1)^{-1}$ is $\ll (\min_{i} t_i + 1)^{-\delta}$ on the polytope for $\delta$ small). Because being of convergent type is an open condition, $\vec \ell - \delta \vec e_i$ is still of convergent type for all standard basis vectors $\vec e_i$ and $\delta > 0$ sufficiently small. Hence, a combination of \cref{lem:UpperBoundNoX} and \cref{lem:FaceIntegralAsymptotic} bounds the contribution of $E_2$ to \cref{eq:SimplexComputation:main} by
	\begin{equation*}
		\ll_{S} B^a (\log B)^{d + \ell - \delta}.
	\end{equation*}

	Lastly, we have to estimate the main term. Combining the main term and the estimates above, we obtain
	\begin{multline}\label{eq:MainTerm}
		I = \frac{B^a J_S}{c_{d+1} \cdots c_s}  \int_{(\log B - A \log \log B) \Delta^d} \bigl( F(\vec u', \vec 0) + \vec 1 \bigr)^{\vec \ell} \,\mathrm d\vec u'\\
		+ O\bigl( B^a (\log B)^{d + \ell - \delta} \bigr).
	\end{multline}
	We now convert the remaining integral into an integral over the face. Since being of convergent type is an open condition, we may find a small $\delta > 0$ such that $\vec \ell' = \frac{1}{1 - \delta} \vec \ell$ is still of convergent type. Split
	\begin{multline*}
		\int_{(\log B - A \log \log B) \Delta^d} \bigl( F(\vec u', \vec 0) + \vec 1 \bigr)^{\vec \ell} \,\mathrm d\vec u'
		= \int_{(\log B) \Delta^d} \bigl( F(\vec u', \vec 0) + \vec 1 \bigr)^{\vec \ell} \,\mathrm d\vec u'\\
		- \int_{(\log B) \Delta^d \setminus (\log B - A \log \log B) \Delta^d} \bigl( F(\vec u', \vec 0) + \vec 1 \bigr)^{\vec \ell} \,\mathrm d\vec u'.
	\end{multline*}
	Hölder's inequality with exponents $p = (1-\delta)^{-1}$ and $q = \delta^{-1}$ bounds the second term by
	\begin{equation*}
		\Bigl( \int_{(\log B) \Delta^d} \bigl( F(\vec u', \vec 0) + \vec 1 \bigr)^{\frac{1}{1-\delta}\vec \ell} \,\mathrm d\vec u' \Bigr)^{1-\delta}
		\Bigl( \mathrm{vol}\bigl( (\log B) \Delta^d \setminus (\log B - A \log \log B) \Delta^d \bigr) \Bigr)^{\delta}.
	\end{equation*}
	Applying \cref{lem:FaceIntegralAsymptotic} to the first factor (after mapping $(\log B) \Delta^d$ onto the face $\cal F_S$; note that this lemma is stated for the full face $\cal F$, but the same argument applies to the subface $\cal F_S$, whose vertices are $d$-dimensional subsets of those of the maximal face) and estimating the volume of the boundary layer by $\ll (\log B)^{d-1} \log \log B$, we find
	\begin{multline*}
		\int_{(\log B) \Delta^d \setminus (\log B - A \log \log B) \Delta^d} \bigl( F(\vec u', \vec 0) + \vec 1 \bigr)^{\vec \ell} \,\mathrm d\vec u'\\
		\ll (\log B)^{(1-\delta)(d + \sum_i \ell'_i)} \cdot \bigl( (\log B)^{d-1} \log \log B \bigr)^{\delta}
		\ll (\log B)^{d + \ell - \delta + \varepsilon}.
	\end{multline*}
	This term is therefore absorbed into the error in \eqref{eq:MainTerm} after shrinking $\delta$ to absorb the $\varepsilon$, and we are left with
	\begin{equation}\label{eq:MainTerm2}
		I = \frac{J_S}{c_{d+1} \cdots c_s} B^a \int_{(\log B) \Delta^d} \bigl( F(\vec u', \vec 0) + \vec 1 \bigr)^{\vec \ell} \,\mathrm d\vec u'
		+ O\bigl( B^a (\log B)^{d + \ell - \delta} \bigr).
	\end{equation}

	The affine map $\vec u' \mapsto \vec t = (\log B)\vec v_0 + \sum_{i=1}^d u_i \vec w_i$ sends $(\log B) \Delta^d$ bijectively onto $(\log B)\cal F_S$ with Jacobian $J_S'$. Here $J_S'$ is a positive constant depending only on $S$, not on $\vec \ell$ or $B$; explicitly, writing $W_d$ for the $s \times d$ matrix with columns $\vec w_1, \ldots, \vec w_d$, the $d$-dimensional Euclidean measure on the face satisfies $J_{S}' \mathrm d\mu = \,\mathrm d\vec u'$ with $J_S' = \bigl(\sqrt{\det(W_d^T W_d)}\bigr)^{-1}$. Under this map the integrand becomes $\prod_{i=1}^s (t_i + 1)^{\ell_i}$, so that
	\begin{equation*}
		\int_{(\log B) \Delta^d} \bigl( F(\vec u', \vec 0) + \vec 1 \bigr)^{\vec \ell} \,\mathrm d\vec u'
		= J_S' \int_{(\log B) \cal F_S} \prod_{i=1}^s (t_i + 1)^{\ell_i} \,\mathrm d\mu.
	\end{equation*}
	Inserting this into \eqref{eq:MainTerm2}, we obtain
	\begin{equation*}
		I = \frac{J_S J_S'}{c_{d+1} \cdots c_s} B^a \int_{(\log B) \cal F_S} \prod_{i=1}^s (t_i + 1)^{\ell_i} \,\mathrm d\mu
		+ O\bigl( B^a (\log B)^{d + \ell - \delta} \bigr).
	\end{equation*}
	The claim follows with
	\begin{equation*}
		C_S = \frac{J_S J_S'}{c_{d+1} \cdots c_s},
	\end{equation*}
	which depends only on the simplex $S$ and not on $\vec \ell$.
	
	\subsubsection*{Proof of~\ref{enum:C_S}} Simply use that the volume of the $d$-dimensional standard simplex is $1/d!$ and that $\vec w_i = \vec v_i - \vec v_0$.
\end{proof}

\begin{theorem}\label{maintheorem_with_simplices}
    Let $f\colon \mathbb N^s \to \mathbb C$ satisfy \cref{mainassumption} and be real and non-negative or satisfy \cref{upperboundassp} as well. Assume that the polytope $\cal P$ is bounded, that $\cal F$ is not contained in any coordinate hyperplane, and that $\vec \ell$ is of convergent type for $\cal F$. 
    Dissect the polytope $\cal P$ into simplices and let $S_1,\ldots, S_h$ be those simplices that have exactly $d+1$ vertices in the maximal face $\cal F$, where $d = \dim \cal F$. Let the constants $C_{S_j}$ be as in \cref{lem:simplex_computation} and $\cal F_{S_j} = \cal F \cap S_j$ for $1\leq j\leq h$.
	
	Then there exists $\varepsilon > 0$ such that 
    \begin{align*}
\Upsilon(B; \vec 1)
		= c_{\cal P} c_f \Bigl( \prod_{i=1}^s \alpha_i^{-\ell_i} \Bigr) B^a (\log B)^{d+\ell}  +  O_{\vec \varpi, \vec \alpha, \vec \ell}(C_f B^a (\log B)^{d + \ell - \varepsilon}),
\end{align*}
where
\begin{align}\label{def:cP}
c_{\cal P} = \sum_{j=1}^h C_{S_j} \int_{\cal F_{S_j}} \prod_{i=1}^s t_i^{\ell_i} \,\mathrm d\mu.
\end{align}
\end{theorem}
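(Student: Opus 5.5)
We start from \cref{lem:sumtointegral} with $\vec X=\vec 1$, so that $\vec W=(\log B,\dots,\log B)$ and $\cal P(\vec W)=\cal P(\log B)=(\log B)\cal P$ by \eqref{eq:polytope-scaling}. We choose $\theta=1+(\log B)^{-\delta'}$ for a small $\delta'>0$, to be fixed at the end. This splits the work into three pieces: the main integral \eqref{eq:main integral}, the $\delta$-saving error term weighted by $(\log\theta)^{-A-s}$, and the boundary-layer term over $\Xi$.

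The boundary-layer term is disposed of directly by \cref{lem:boundary_theta_bound}, which gives $\ll B^a(\log B)^{d+\ell-\epsilon}$ for this choice of $\theta$.

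For the $\delta$-saving term, we first enlarge the domain to $\cal P(\log B+\gamma\log\theta)$. Since $\gamma\log\theta\ll 1$, rescaling shows this differs from $(\log B)\cal P$ only by a bounded factor in $B^a$. We then use $(1+\min_i t_i)^{-\delta}\le\sum_{i}(1+t_i)^{-\delta}$, so the integrand is bounded by a sum of integrands with exponent vectors $\vec\ell-\delta\vec e_i$. By \cref{rem:convergent_type_open} we may shrink $\delta$ so that each $\vec\ell-\delta\vec e_i$ is still of convergent type. Then \cref{lem:UpperBoundNoX} together with \cref{lem:FaceIntegralAsymptotic} bounds each piece by $B^a(\log B)^{d+\ell-\delta}$. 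Multiplying by $(\log\theta)^{-A-s}=(\log B)^{\delta'(A+s)}$, this error is $\ll B^a(\log B)^{d+\ell-\delta+\delta'(A+s)}$. Choosing $\delta'<\delta/(A+s)$ yields a genuine power-of-log saving, and this requires $\delta'$ to be fixed before invoking \cref{lem:boundary_theta_bound}.

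For the main term, we dissect $\cal P$ into simplices and write $(\log B)\cal P$ as the union of the dilated simplices $(\log B)S$, which overlap only in measure zero. Simplices with at most $d$ vertices in $\cal F$ contribute $O(B^a(\log B)^{d+\ell-\delta})$ by \cref{lem:simplex_computation}\ref{enum:case-le-d}. Each $S_j$ contributes, by part \ref{enum:d+1-case},
\[
C_{S_j}B^a\int_{(\log B)\cal F_{S_j}}\prod_i(t_i+1)^{\ell_i}\,\mathrm d\mu .
\]
Applying \cref{lem:FaceIntegralAsymptotic} on the subface $\cal F_{S_j}$, as the paper already notes this is legitimate, this equals
\[
C_{S_j}B^a(\log B)^{d+\ell}\int_{\cal F_{S_j}}\prod_i t_i^{\ell_i}\,\mathrm d\mu\,\bigl(1+O((\log B)^{-\epsilon})\bigr).
\]
Summing over $j$ yields $c_{\cal P}$ as in \eqref{def:cP}. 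Multiplying by $c_f\prod_i\alpha_i^{-\ell_i}$ from \cref{lem:sumtointegral} and taking $\varepsilon$ to be the minimum of all the savings gives the theorem.

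The main obstacle is the balancing of $\theta$ in the $\delta$-saving term, where the loss $(\log\theta)^{-A-s}$ must be beaten by the saving from lowering exponents. A second subtlety is that the subface version of \cref{lem:FaceIntegralAsymptotic} requires convergence on each $\cal F_{S_j}$. This follows because $\cal F_{S_j}\subseteq\cal F$, so the integral over it is dominated by the integral over $\cal F$. The lemma's requirement that the face not lie in a coordinate hyperplane is used only for the boundary estimates, and these carry over to the subfaces.
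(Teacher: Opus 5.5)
Your proposal is correct and follows essentially the same route as the paper. You apply \cref{lem:sumtointegral} with $\theta = 1+(\log B)^{-\delta'}$, dispose of the $\Xi$-term via \cref{lem:boundary_theta_bound}, and bound the $(\log\theta)^{-A-s}$-term via \cref{lem:UpperBoundNoX} with $\delta'$ small. You then evaluate the main integral simplex by simplex with \cref{lem:simplex_computation}, followed by \cref{lem:FaceIntegralAsymptotic} on the subfaces $\cal F_{S_j}$; your reduction of the weight $(1+\min_i t_i)^{-\delta}$ to exponent vectors $\vec\ell-\delta\vec e_i$ and your domination argument for the subfaces just make explicit steps the paper leaves implicit.
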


\begin{proof}
Let $\gamma$ be as in \cref{lem:sumtointegral}. Then by \cref{lem:sumtointegral} we have
\begin{multline*}
		\Upsilon(B; \vec 1)
		= c_f \Bigl( \prod_{i=1}^s \alpha_i^{-\ell_i} \Bigr) \int_{\cal P(\vec W)}  \prod_{i=1}^s \mathrm e^{t_i} (t_i + 1)^{\ell_i} \,\mathrm d\vec t\\
		+ O\biggl(
		C_f (\log \theta)^{-A - s} \int_{\cal P(\vec W + (\gamma \log \theta,\ldots,\gamma\log\theta))} (1 + \min\nolimits_i t_i)^{-\delta} \prod_{i=1}^s \mathrm e^{t_i} (1 + t_i)^{\ell_i} \,\mathrm d\vec t
		\biggr)\\
		+ O\biggl(
		C_f \int_{\Xi} \prod_{i=1}^s \mathrm e^{t_i} (1 + t_i)^{\ell_i} \,\mathrm d\vec t
		\biggr),
	\end{multline*}
	where
	\begin{equation*}
		\Xi = \cal P(\vec W + (\gamma \log \theta, \dots, \gamma \log \theta)) \setminus \cal P(\vec W - (\gamma \log \theta, \dots, \gamma \log \theta)).
	\end{equation*}
Assume that $\theta= 1+ (\log B)^{-\delta'}$ for some $\delta'>0$. Then by \cref{lem:boundary_theta_bound}, there exists $\delta>0$ such that
\begin{equation*}
\int_{\Xi} \prod_{i=1}^s \mathrm e^{t_i} (1 + t_i)^{\ell_i} \,\mathrm d\vec t\ll_{\gamma} B^a (\log B)^{d+\ell-\delta}. 
\end{equation*}

Next we apply \cref{lem:UpperBoundNoX} and find the upper bound
\begin{align*}
\int_{\cal P(\vec W + (\gamma \log \theta,\ldots,\gamma\log\theta))} (1 + \min\nolimits_i t_i)^{-\delta} \prod_{i=1}^s \mathrm e^{t_i} (1 + t_i)^{\ell_i} \,\mathrm d\vec t \\ \ll_{\gamma} e^{a \left(\log B+ \gamma \log \theta\right)} (\log B)^{d+\ell - \delta}\ll_{\gamma} B^a (\log B)^{d+\ell-\delta}.
\end{align*}
By taking $\delta'>0$ sufficiently small, we can ensure that there exists $\delta''>0$ such that
\begin{align*}
C_f (\log \theta)^{-A - s} \int_{\cal P(\vec W + (\gamma \log \theta,\ldots,\gamma\log\theta))} (1 + \min\nolimits_i t_i)^{-\delta} \prod_{i=1}^s \mathrm e^{t_i} (1 + t_i)^{\ell_i} \,\mathrm d\vec t \\ \ll C_f B^a (\log B)^{d+\ell-\delta''}.
\end{align*}
Applying \cref{lem:simplex_computation} leads to

		\begin{multline*}
			\int_{\cal P(\vec W)}  \prod_{i=1}^s \mathrm e^{t_i} (t_i + 1)^{\ell_i} \,\mathrm d\vec t = \sum_{j=1}^h C_{S_j} B^a \int_{(\log B)\cal F_{S_j}} \prod_{i=1}^s (t_i + 1)^{\ell_i} \,\mathrm d\mu\\
			+ O(B^a (\log B)^{d + \ell - \delta}),
		\end{multline*}
for some $\delta >0$. Together with \cref{lem:FaceIntegralAsymptotic} applied to the subfaces instead of the whole maximal face, we obtain 
\begin{align*}
\Upsilon(B; \vec 1)
		= c_f \Bigl( \prod_{i=1}^s \alpha_i^{-\ell_i} \Bigr) B^a (\log B)^{d+\ell}\sum_{j=1}^h C_{S_j} \int_{\cal F_{S_j}} \prod_{i=1}^s t_i^{\ell_i} \,\mathrm d\mu\\
			+ O(C_f B^a (\log B)^{d + \ell - \delta})
\end{align*}
for some $\delta >0$.
\end{proof}

\section{Families over toric bases}\label{sec:proofMT_examples}
We now turn to proving \cref{main_theorem_toric} and providing two classes of examples of specific toric varieties together with inputs that are `admissible' for each.
Keep all notation as in the introduction and recall that 
\begin{equation*}
	H(\vec x) = \max_{\sigma\in \Sigma^{\max}} |\vec x^{-K(\sigma)}|,
\end{equation*}
where $-K = \sum_{d\in \Dc, 1\le j\le a_d} D_{d,j}$ is anticanonical, is an anticanonical height function.
The set $\{-K(\sigma) : \sigma\in \Sigma^{\max}\}$ of divisors consists precisely of the extremal ways to write the class $[-K]$ as a non-negative linear combination of the boundary divisors, meaning that each of the entries in the maximum corresponds to a vertex of the polytope $\RR_{\ge 0}^{\Sigma(1)} \cap \deg^{-1}(-K)$, where $\deg\colon \ZZ^{\Sigma(1)}\to \Pic X$ maps each ray to the class of its corresponding divisor.

\begin{lemma}\label{lem:factorization_H_theta}
	The height $H\colon \ZZ^{n+\rho}\to\RR_{>0}$ factors through
	\begin{equation*}
	\norm{\cdot}\colon \ZZ^{n+\rho} \to \ZZ^{\Dc}_{\ge 0},\ \vec x \mapsto (\norm{x_d}_\infty)_{d\in \Dc}
	\end{equation*}
	and the coprimality condition $\theta$, detecting \cref{eq:coprimality-toric}, factors through
	\begin{equation*}
		g\colon \ZZ^{n+\rho} \to \ZZ^{\Dc}_{>0},\ \vec x \mapsto (\gcd(\vec x_d))_{d\in \Dc}.
	\end{equation*}
\end{lemma}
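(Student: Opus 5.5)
The plan is to exploit one structural fact: for a fixed maximal cone $\sigma$, the divisor $-K(\sigma)$ has the same coefficient on all invariant prime divisors of a given class. The same is true of the coprimality condition \cref{eq:coprimality-toric}: the set of variables it involves is a union of full degree classes.

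\textbf{Key step (coefficients constant on classes).} Fix $\sigma\in\Sigma^{\max}$. Since $X$ is smooth and projective, the classes of the divisors whose rays lie in $\sigma$ cannot involve a repeated class. Indeed, two distinct rays $\rho_{d,j},\rho_{d,j'}$ of the same class $d$ cannot both lie in $\sigma$. Otherwise $D_{d,j}-D_{d,j'}$ would be a principal divisor $\operatorname{div}(\chi^m)$, so $\langle m,u\rangle$ vanishes on all rays $u$ of $\sigma$ other than these two, and equals $1$ and $-1$ on the two of them. But the rays of a smooth maximal cone form a basis, so this would be compatible only if $m$ is determined uniquely, and then consistency on the remaining rays outside $\sigma$ is a real condition.

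A cleaner route is the following. The map $\Pic X\leftarrow \ZZ^{\Sigma(1)\setminus\sigma(1)}$ is an isomorphism for smooth $\sigma$, since the divisors off $\sigma$ give a basis of $\Pic X$. This already shows that $\sigma(1)$ omits exactly $\rho$ rays whose classes form a basis of $\Pic X$. The classes of these omitted rays are therefore pairwise distinct, and linearly independent in $\Pic X$. In particular, for each $d\in\Dc$ at most one of the $D_{d,j}$ lies off $\sigma$. Hmm — that gives "at most one", not "all or none", so I will instead use the fact that divisors of the same class are interchangeable. I have to look at the height and the gcd condition directly.

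\textbf{Reformulation.} Write the maximum over $\sigma$ as a maximum over the vertices of $\RR_{\ge0}^{\Sigma(1)}\cap\deg^{-1}(-K)$, as noted in the text. Two consequences follow.
\begin{itemize}
\item[(a)] This polytope is invariant under permuting the coordinates within each class $d$. So the set of vertices, and hence the set of monomials $\vec x^{-K(\sigma)}$, is stable under these permutations.
\item[(b)] Each vertex $-K(\sigma)$ is supported on a set of divisors with pairwise distinct classes, with coefficient $k_d(\sigma)$ on the single divisor of class $d$ off $\sigma$, if there is one.
\end{itemize}

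Given (a) and (b), the maximum can be regrouped. For each $\sigma$, consider the orbit of vertices obtained by permuting within classes. The maximum over this orbit of $\prod_d |x_{d,j_d}|^{k_d}$, with $j_d$ ranging freely, equals $\prod_d \norm{x_d}_\infty^{k_d}$. This shows that $H(\vec x)=H'(\norm{x_d}_\infty)_d$ with $H'$ monomial, as in \cref{eq:package-height}.

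\textbf{Coprimality.} For $\sigma$, the condition in \cref{eq:coprimality-toric} is $\gcd(x_{d,j}:\rho_{d,j}\not\subset\sigma)=1$, which by (b) involves one variable from each class in some set $T_\sigma\subseteq\Dc$. Conjunction over the permutation orbit of $\sigma$ turns this into the following: for every choice of one index $j_d$ in each class $d\in T_\sigma$, the gcd of the chosen variables is $1$.

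This is equivalent to $\gcd_{d\in T_\sigma}\gcd(\vec x_d)=1$. The direction $\Leftarrow$ is trivial. For $\Rightarrow$, suppose a prime $p$ divides every $\gcd(\vec x_d)$ with $d\in T_\sigma$; then it divides every chosen tuple. Conversely, if no prime divides all the $\gcd(\vec x_d)$, then for each prime $p$ some class $d$ has some $x_{d,j}$ prime to $p$, and choosing that $j$ gives a tuple whose gcd is prime to $p$. Hence $\theta$ depends only on $(\gcd(\vec x_d))_d$.

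\textbf{Main obstacle.} The hard step is justifying (a) and (b), namely that every permutation within a class of one maximal cone's complement again gives the complement of a maximal cone. I would try to deduce this from the description of maximal cones as exactly the sets $\sigma(1)$ whose complement classes form a basis of $\Pic X$ spanning a cone that contains an ample class. This description depends only on the classes, so it is permutation-invariant. The caveat is that the complement-classes characterisation is the standard one for the Gale-dual (GKZ) chamber description, so this step leans on that characterisation being correct.
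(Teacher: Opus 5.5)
The gap is in the coprimality step. You claim that ``for every choice of one index $j_d$ in each class $d\in T_\sigma$, $\gcd(x_{d,j_d} : d\in T_\sigma)=1$'' is equivalent to $\gcd_{d\in T_\sigma}\gcd(\vec x_d)=1$, and you call the direction $\Leftarrow$ trivial. That direction is false. Take $T_\sigma=\{d_1,d_2\}$, $\vec x_{d_1}=(2,3)$ and $\vec x_{d_2}=(2,5)$. Both class gcds are $1$, yet choosing the first coordinates gives $\gcd(2,2)=2$. Your ``conversely'' sentence only produces, for each prime $p$, \emph{some} tuple whose gcd is prime to $p$. That is an existential statement, not the universal one you set out to prove.

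This is not a repairable slip: the universally quantified condition really does not factor through $g$. On $\PP^1\times\PP^1$ the classes are $\{x_1,x_2\}$ and $\{x_3,x_4\}$, and the complements of maximal cones are the four pairs $\{x_i,x_j\}$ with $i\in\{1,2\}$, $j\in\{3,4\}$. The points $(2,3,2,5)$ and $(3,5,7,11)$ have the same image $(1,1)$ under $g$, but only the second has all off-$\sigma$ pairs coprime. (On $\PP^n$ this reading would even force $|x_i|=1$ for all $i$.) So \eqref{eq:coprimality-toric}, read literally with a ``for all $\sigma$'', cannot be the condition you work with.

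The condition that actually describes integral points on the torsor is existential, prime by prime: for every $p$ there is a maximal cone $\sigma$ with $p\nmid x_{d,j}$ for all $\rho_{d,j}\not\subset\sigma$. Equivalently, $\gcd_{\sigma\in\Sigma^{\max}}\prod_{\rho_{d,j}\not\subset\sigma}x_{d,j}=1$. For this version your orbit argument goes through, and your ``conversely'' sentence is exactly the right step:
\begin{enumerate}
\item By (b) and permutation-stability of the complements, some $\sigma$ in the orbit works at $p$ if and only if some choice of $j_d$, $d\in T_\sigma$, has $p\nmid x_{d,j_d}$ for all $d$.
\item Since choices are independent across classes, this holds if and only if $p\nmid\gcd(\vec x_d)$ for all $d\in T_\sigma$.
\item Hence $\theta(\vec x)=1$ if and only if $\gcd_{\sigma}\prod_{d\in T_\sigma}\gcd(\vec x_d)=1$.
\end{enumerate}
Note that this is not the formula you ended with (the conjunction over $\sigma$ of $\gcd_{d\in T_\sigma}\gcd(\vec x_d)=1$). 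On $\PP^1\times\PP^1$ your formula is strictly weaker than the torsor condition.

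Two further remarks. First, your opening ``key step'' is false and should be deleted. $-K(\sigma)$ is not constant on classes, and two rays of one class can lie in a common maximal cone, already on $\PP^2$ or for $a_0\ge 3$ in the Picard rank $2$ examples. What is true, and what you correctly use as (b), is that the divisors off a smooth maximal cone form a $\ZZ$-basis of $\Pic X$. Hence each $-K(\sigma)$ and each coprimality condition involves at most one variable per class. This point is genuinely needed, since symmetry alone would not give factorization through $\norm{\cdot}$ or $g$, and the paper leaves it implicit.

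Second, the GKZ characterisation you propose for your ``main obstacle'' is correct, but the paper avoids it with a direct wall-crossing argument. Suppose $D_{d,j}-D_{d,k}=\operatorname{div}(\chi^m)$. Then $\langle m,u_{d,j}\rangle=1$, $\langle m,u_{d,k}\rangle=-1$, and every other ray generator lies in $m^\perp$. Let $\sigma$ be a maximal cone containing $\rho_{d,j}$ but not $\rho_{d,k}$. The facet spanned by $\sigma(1)\setminus\{\rho_{d,j}\}$ then spans $m^\perp$. The maximal cone on the other side of that facet must contain a ray with negative $m$-value, so it is obtained by adjoining $\rho_{d,k}$. Thus every transposition within a class preserves $\Sigma^{\max}$ and sends $-K(\sigma)$ to $-K(\sigma')$. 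This gives both your (a) and the stability of complements that the gcd step needs. If $-K$ is ample, stability of complements also follows from your own (a), because the support of $-K(\sigma)$ is exactly the complement of $\sigma(1)$.
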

\begin{proof}
This simply means that both the coprimality condition and the height are symmetric within the $x_{d,j}$ for fixed $d\in \Dc$. To verify this, note that each linear equivalence $D_{d,j}\sim D_{d,k}$ between divisors corresponding to variables $x_{d,j}$ and $x_{d,k}$ is induced by a character $\chi_m$, meaning that the generators of the two corresponding rays satisfy $\langle m, u_{d,k}\rangle = - \langle m, u_{d,j}\rangle$, while all other ray generators are contained in the hyperplane $H_m$ orthogonal to $m$. Thus for each maximal cone $\sigma$ containing $\rho_{d,j}$, its facet $\sigma_0 = \Cone(\sigma(1)\setminus \rho_{d,j})$ is contained in $H_m$. Consequently, $\sigma' = \Cone(\sigma_0 \cup \rho_{d,k})$ is another maximal cone, inducing a coprimality condition that precisely swaps the roles of $x_{d,j}$ and $x_{d,k}$. Moreover, $-K(\sigma')$ replaces the summand $k_{d,j} D_{d,j}$ of $-K(\sigma)$ by $k_{d,j} D_{d,k}$.
\end{proof}
From this, we obtain a function $\theta'\colon \ZZ_{\ge 0}^\Dc \to \{0,1\}$ tracking the coprimality condition and a height $H'\colon \ZZ^{\Dc} \to \RR_{>0}$ given by $k$ monomials, that is, there are vectors $\vec\varpi_1,\dots,\vec\varpi_k\in \ZZ_{\ge 0}^\Dc$ such that~\eqref{eq:package-height} holds:
\begin{equation*}
	H'(\vec b) = \max_{1\le j\le k}\vec b^{\vec\varpi_j} = \max_{1\le j\le k} \prod_{d\in \Dc} b_d^{\varpi_{j,d}}.
\end{equation*}
We may rewrite the number $N(B)$ of everywhere locally soluble fibres as
\begin{equation*}
	N(B) = 2^{-\rho} \sum_{\substack{\vec x\in \ZZ^{n+\rho}\\ H(\vec x)\le B}} \theta(\vec x) \prod_{i\in \Ic} \sigma_i(\vec x_i),
\end{equation*}
where
\begin{equation*}
	\theta(\vec x) = \begin{cases}
		1, & \gcd(x_{d,j} : \rho_{d,j}\not\subset\sigma) =1 \text{ for all } \sigma\in \Sigma^{\max}, \\
		0, & \text{else}
	\end{cases}
\end{equation*}
and
\begin{equation*}
	\sigma_i(\vec x_i) = \begin{cases}
		1, & \psi_i^{-1}(\vec x_i)(\mathbb{A}_\QQ)\ne \emptyset \\
		0, & \text{else}.
	\end{cases}
\end{equation*}

\subsection{Möbius inversion} As a first simplification, let $\mu\colon \ZZ_{>0}^{n+\rho}\to \ZZ$ be the Möbius function introduced by Salberger~\cite[11.9]{MR1679841}.
\begin{lemma}\label{lem:mobius-factorization}
	This Möbius function factors through the diagonal embedding $\ZZ^\Dc\to \ZZ^{n+\rho}$, in the sense that $\mu(\vec \eta)=0$ whenever $\vec \eta$ does not lie in its image.
\end{lemma}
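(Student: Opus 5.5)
We first recall Salberger's construction. The Möbius function $\mu$ on $\ZZ_{>0}^{n+\rho}$ is the multiplicative function determined by inverting the coprimality indicator. It is defined by requiring
\begin{equation*}
	\theta(\vec x)=\sum_{\vec\eta\mid \vec x}\mu(\vec \eta),
\end{equation*}
where divisibility is coordinatewise. Equivalently, $\mu$ is multiplicative in the sense $\mu(\vec\eta)=\prod_p\mu(p^{\vec e_p})$ with $\vec\eta=\prod_p p^{\vec e_p}$. At a prime, $\mu(p^{\vec e})$ vanishes unless $\vec e\in\{0,1\}^{n+\rho}$. For $\vec e\in\{0,1\}^{n+\rho}$ one has $\mu(p^{\vec e})=\sum_{T}(-1)^{|T|}$, where $T$ ranges over sets of maximal cones (or sets of ``primitive collections'') whose associated union of variable sets equals the support of $\vec e$. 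In short, $\mu(p^{\vec e})$ is the Möbius function of the lattice of subsets $S\subseteq\Sigma(1)$ at which the local condition fails, computed by inclusion–exclusion over the conditions $\gcd(x_{d,j}:\rho_{d,j}\not\subset\sigma)=1$.

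By multiplicativity, the lemma reduces to a statement about a single prime. We must show that $\mu(p^{\vec e})=0$ for every $\vec e\in\{0,1\}^{n+\rho}$ that is not constant on each group $\{x_{d,1},\dots,x_{d,a_d}\}$; this is the meaning of ``not in the image of the diagonal embedding''. At the prime $p$, the indicator of the local condition is a function of the subset $S=\{(d,j):p\mid x_{d,j}\}$. The condition fails exactly when $S$ contains the complement of some $\sigma(1)$. So we may write $\theta_p(S)=1-[\exists\sigma: \Sigma(1)\setminus\sigma(1)\subseteq S]$, and $\mu(p^{\cdot})$ is the Möbius inverse of $\theta_p$ on the Boolean lattice, namely
\begin{equation*}
	\mu_p(E)=\sum_{S\subseteq E}(-1)^{|E\setminus S|}\theta_p(S).
\end{equation*}

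The key step is therefore combinatorial. Suppose $E$ contains $(d,j)$ but not $(d,k)$ for some $j\neq k$. We aim to exhibit a sign-reversing involution, or a factorization, showing that the alternating sum vanishes. The natural choice is to toggle membership of $(d,j)$, that is, to pair $S$ with $S\triangle\{(d,j)\}$. This pairing kills the sum provided $\theta_p(S)=\theta_p(S\cup\{(d,j)\})$ for all $S\subseteq E\setminus\{(d,j)\}$. To check that, we use the geometric input from the proof of \cref{lem:factorization_H_theta}. Suppose a cone $\sigma$ witnesses failure for $S\cup\{(d,j)\}$, so that $(d,j)\notin\sigma(1)$ and $\Sigma(1)\setminus\sigma(1)\subseteq S\cup\{(d,j)\}$. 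Since $(d,k)\notin E$, we get $(d,k)\in\sigma(1)$. The swap argument from that proof gives another maximal cone $\sigma'$ with $\sigma'(1)=\sigma(1)\setminus\{\rho_{d,k}\}\cup\{\rho_{d,j}\}$. Its complement is $(\Sigma(1)\setminus\sigma(1))\setminus\{(d,j)\}\cup\{(d,k)\}$. This only gives failure for $S$ if $(d,k)\in S$, which is false, so this naive toggle does not obviously work.

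Instead, we toggle $(d,j)$ while simultaneously using the symmetry $\tau$ swapping $j\leftrightarrow k$ that preserves $\theta_p$. Precisely, we will prove that a rank-$2$ ``local'' fact holds. For each fixed configuration of the remaining variables, the function of the pair $(\mathbf 1_{(d,j)\in S},\mathbf 1_{(d,k)\in S})$ depends only on whether \emph{both} variables are divisible by $p$. This should follow because $D_{d,j}$ and $D_{d,k}$ appear together in the same way: every primitive collection containing $\rho_{d,j}$ contains $\rho_{d,k}$. This is itself to be deduced from the swap statement, since a cone containing neither $\rho_{d,j}$ nor $\rho_{d,k}$ would contradict $\langle m,u_{d,j}\rangle=-\langle m,u_{d,k}\rangle$ together with smoothness and completeness, as such cones lie on one side of $H_m$. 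Granting this, $\theta_p(S)$ depends on $(d,j),(d,k)$ only through the product of their indicators. Then the alternating sum over $S\subseteq E$ factors in these two coordinates as $\sum_{\epsilon\le e_{d,j}}(-1)^{e_{d,j}-\epsilon}g(\epsilon\cdot 0)=0$ when $e_{d,j}=1$ and $e_{d,k}=0$. This proves the lemma.

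The main obstacle is establishing that every primitive collection (every set $\Sigma(1)\setminus\sigma(1)$ that is minimal) treats $\rho_{d,j}$ and $\rho_{d,k}$ jointly. Concretely, we need that no maximal cone contains both rays, and that the only way to omit one of them is to contain the other. We would first try to derive this from the hyperplane $H_m$ argument in \cref{lem:factorization_H_theta}, using completeness of the fan.
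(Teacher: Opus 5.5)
\textbf{There is a genuine gap: the key step is left open, and with your $\theta_p$ the lemma is false.}

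Your reduction to a single prime and the formula $\mu_p(E)=\sum_{S\subseteq E}(-1)^{|E\setminus S|}\theta_p(S)$ are fine. Your final cancellation is also correct \emph{once} one knows that $\theta_p(S)$ sees each group $\{(d,1),\dots,(d,a_d)\}$ only through whether the whole group lies in $S$. But that is the entire content of the lemma, and you leave it open as the ``main obstacle''.

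Worse, take the $\theta_p$ you actually wrote down, $\theta_p(S)=1-[\exists\sigma:\Sigma(1)\setminus\sigma(1)\subseteq S]$, which is a literal reading of \eqref{eq:coprimality-toric}. For it the lemma is false. For $X=\PP^1\times\PP^1$ the sets $\Sigma(1)\setminus\sigma(1)$ are the four pairs $\{(1,j),(2,k)\}$, and $E=\{(1,1),(2,1)\}$ gives $\mu_p(E)=0-1-1+1=-1$. This is exactly why your toggle broke down.

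The condition that cuts out the integral points of the universal torsor is Salberger's $\gcd\{\prod_{\rho\notin\sigma(1)}x_\rho:\sigma\in\Sigma^{\max}\}=1$. Locally this reads $\theta_p(S)=[\exists\sigma\in\Sigma^{\max}: S\subseteq\sigma(1)]$, i.e.\ $S$ contains no primitive collection. Primitive collections are the minimal subsets of $\Sigma(1)$ lying in no cone, not the minimal complements $\Sigma(1)\setminus\sigma(1)$ as you write.

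Your proposed route to the key fact is also flawed. The claim ``no maximal cone contains both $\rho_{d,j}$ and $\rho_{d,k}$'' is false whenever $a_d\ge 3$: in the fan of $\PP^2\times\PP^1$ (rays $e_1,e_2,-e_1-e_2,\pm e_3$), $\Cone(e_1,e_2,e_3)$ is a maximal cone. Moreover, for your $\theta_p$ that claim would force every $\Sigma(1)\setminus\sigma(1)$ to contain exactly one of the two rays, the opposite of treating them jointly.

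\textbf{How to repair it.} With the correct $\theta_p$, the naive toggle you discarded works, using only the two facts you already have. Let $S\subseteq E\setminus\{(d,j)\}$ with $(d,k)\notin E$. Since $\theta_p$ is monotone, it suffices to show that $\theta_p(S)=1$ implies $\theta_p(S\cup\{(d,j)\})=1$.
\begin{enumerate}
\item Take a maximal $\sigma$ with $S\subseteq\sigma(1)$. If $\rho_{d,j}\in\sigma(1)$, you are done.
\item Otherwise $\rho_{d,k}\in\sigma(1)$, because all rays other than $\rho_{d,j},\rho_{d,k}$ lie in $H_m$ and $\sigma$ is full-dimensional.
\item Then the facet $\Cone(\sigma(1)\setminus\{\rho_{d,k}\})$ lies in $H_m$. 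Its other adjacent maximal cone must use the only ray on the other side of $H_m$, giving $\sigma'$ with $\sigma'(1)=(\sigma(1)\setminus\{\rho_{d,k}\})\cup\{\rho_{d,j}\}$.
\item Since $(d,k)\notin S$, we get $\sigma'(1)\supseteq S\cup\{(d,j)\}$.
\end{enumerate}
Thus $\theta_p(S)=\theta_p(S\cup\{(d,j)\})$, and pairing $S$ with $S\cup\{(d,j)\}$ gives $\mu_p(E)=0$. This local argument proves the factorization $\theta=\theta'\circ g$ rather than citing it.

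\textbf{Comparison with the paper.} The paper argues globally. It takes that factorization from \cref{lem:factorization_H_theta}. It observes that $\vec\eta\mid g(\vec x)$ in $\ZZ^{\Dc}$ if and only if the diagonal image of $\vec\eta$ divides $\vec x$. Hence the Möbius inverse $\mu'$ of $\theta'$, extended by zero off the diagonal, satisfies $\theta(\vec x)=\sum_{\vec\eta\mid\vec x}\mu(\vec\eta)$, and uniqueness of Möbius inversion finishes the proof. This avoids prime-by-prime combinatorics entirely.
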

\begin{proof}
	Using \cref{lem:factorization_H_theta}, we factor $\theta = \theta'\circ g$. The resulting function $\theta'\colon \ZZ_{>0}^{\Dc}\to \{0,1\}$ induces a Möbius function $\mu'\colon \ZZ_{>0}^{\Dc}\to \ZZ$ by Salberger's construction, satisfying
	\begin{equation*}
		\theta(\vec x) = \theta'(g(\vec x)) = \sum_{\vec \eta \mid g(\vec x)}\mu'(\vec \eta) = \sum_{\substack{\vec \eta \in \ZZ^{\Dc} \\ \eta_d \mid \vec x_d}} \mu'(\vec \eta).
	\end{equation*}
	Extending this new Möbius function by $0$, it coincides with $\mu$ by uniqueness of the latter.
\end{proof}
We arrive at
\begin{equation*}
	N(B) = \sum_{\vec \eta \in \ZZ_{>0}^{\Dc} } \mu'(\vec\eta)N(\vec \eta, B),
\end{equation*}
where
\begin{equation*}
	N(\vec\eta, B) = \sharp\{\vec x \in \ZZ^{n+\rho} : H(\vec x)\le B,\  \eta_d \mid \vec x_d, \text{ and } \vec x_d\neq 0 \text{ for all } d\in \Dc \}.
\end{equation*}
For $\vec b \in \RR_{>0}^{\Dc}$ and $\vec\eta\in \ZZ_{>0}^{\Dc}$, define
\begin{equation*}
	n_{\vec\eta}(\vec b) = 2^{-\rho} \sum_{\substack{\vec x\in \ZZ^{n+\rho}\\ \norm{\vec x_d} = b_d\\ \eta_d\mid\vec x_d}} \prod_{d\in \Dc} \sigma_d(\vec x_d),
\end{equation*}
where $\norm{\cdot}$ denotes the supremum norm and we set $\sigma_d(\vec x_d)=1$ for $d\in \Dc\setminus\Ic$. Appealing to \cref{lem:factorization_H_theta}, we factor the height function $H = H'\circ \norm{\cdot}$ through $\vec x \mapsto (\norm{x_d})_d$ in order to get
\begin{equation*}
	N(B) = \sum_{\vec \eta\in \ZZ_{>0}^{\Dc}} \mu'(\vec \eta)N(\vec\eta, B) 
\end{equation*}
with
\begin{equation*}
	N(\vec\eta, B) = \sum_{\substack{\vec b \in \ZZ_{>0}^{\Dc} \\ H'(\vec b)\le B}} n_{\vec \eta}(\vec b)
\end{equation*}
and $H'$ as in~\eqref{eq:package-height}.
In order to apply \cref{maintheorem}, for $1\leq \eta_d\leq B_d$, $d\in \Dc$, we need to compute sums of the form
\begin{align*}
	\sum_{\vec b\leq \vec B} n_{\vec \eta}(\vec b) = 2^{-\rho} \sum_{\substack{\vec x\in \ZZ^{n+\rho} \\ \norm{\vec x_d} \leq B_d/\eta_d}} \prod_{d\in \Dc} \sigma_d(\vec x_d) = 2^{-\rho} \prod_{d\in \Dc} N_d\!\left(\frac{B_d}{\eta_d}\right).
\end{align*}
\begin{lemma}
	If $B_d>1$ and $\eta_d\ge 1$, then
 \begin{equation}\label{eq:N(B/eta)}
	N_d\!\left(\frac{B_d}{\eta_d}\right) = 
	c_d\left(\frac{B_d}{\eta_d}\right)^{a_d} (1+\log B_d)^{-\Delta_d} (1 + O((1+ \log \eta_d)(1+\log B_d)^{-\delta})).
 \end{equation}
\end{lemma}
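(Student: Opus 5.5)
Writing $X = B_d/\eta_d$, the plan is to plug $X$ into \eqref{eq:affine-asymp} (or, for $d\notin\Ic$, into the trivial count $N_d(X)=c_dX^{a_d}(1+O(X^{-1}))$) and then compare $(1+\log X)^{-\Delta_d}$ with $(1+\log B_d)^{-\Delta_d}$. Three ranges of $\eta_d$ need different treatment.

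\emph{Main range} $1\le \eta_d\le B_d^{1/2}$, say. Here $\log X=\log B_d-\log\eta_d\ge \tfrac12\log B_d$, so $1+\log X\asymp 1+\log B_d$. Then
\[
(1+\log X)^{-\Delta_d}=(1+\log B_d)^{-\Delta_d}\Bigl(1-\frac{\log\eta_d}{1+\log B_d}\Bigr)^{-\Delta_d}.
\]
The ratio inside the bracket lies in $[0,1/2]$, so the mean value theorem gives
\[
\Bigl(1-\frac{\log\eta_d}{1+\log B_d}\Bigr)^{-\Delta_d}=1+O\bigl((1+\log\eta_d)(1+\log B_d)^{-1}\bigr).
\]
This is acceptable because $\delta<1/2<1$. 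The error term $O((1+\log X)^{-\delta})$ from \eqref{eq:affine-asymp} becomes $O((1+\log B_d)^{-\delta})$ after the same comparison. For $d\notin\Ic$ we have $\Delta_d=0$, and $X^{-1}\ll(1+\log X)^{-1}\ll(1+\log B_d)^{-\delta}$, so that case is covered too. Multiplying out gives \eqref{eq:N(B/eta)} in this range.

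\emph{Large range} $\eta_d> B_d^{1/2}$. Here $\log\eta_d\gg\log B_d$, so
\[
(1+\log\eta_d)(1+\log B_d)^{-\delta}\gg(1+\log B_d)^{1-\delta}\ge(1+\log B_d)^{\Delta_d'}
\]
fails to be large only in limited ways. Instead of tracking that, I would prove the claim by showing that both sides are $O$ of the error term. Specifically, I want
\[
N_d(X)\ll c\,X^{a_d}(1+\log B_d)^{-\Delta_d}(1+\log\eta_d)(1+\log B_d)^{-\delta}.
\]
For $X\ge1$, use \eqref{eq:affine-asymp} to bound $N_d(X)\ll X^{a_d}(1+\log X)^{-\Delta_d}\le X^{a_d}$. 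Since $\Delta_d\ge0$, it then suffices to have
\[
(1+\log B_d)^{\Delta_d+\delta}\ll 1+\log\eta_d\asymp 1+\log B_d .
\]
This holds when $\Delta_d+\delta\le1$. If $X<1$, then $N_d(X)=0$ because the variables are nonzero integers; for $d\notin\Ic$ there is only the zero vector, which contributes $O(1)$ and is negligible or zero under the nonvanishing convention. So the main obstacle is the case $\Delta_d+\delta>1$ in the large range.

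To handle that case I would exploit the freedom in the main-range cutoff: replace $B_d^{1/2}$ by $B_d^{1-\kappa}$, where $\kappa=(1+\log B_d)^{-\lambda}$ is a small power, or simply note that the lemma is only used inside the sum over $\vec\eta$ with $\mu'$. If neither works, the fallback is to read the statement as an upper/asymptotic bound uniform for $\eta_d\le B_d^{1-\epsilon}$ and to check that larger $\eta_d$ are handled separately by trivial bounds in the Möbius sum. I would first try to verify the large range directly. Using monotonicity, $(1+\log X)^{-\Delta_d}$ with $X\ge1$ can be large relative to $(1+\log B_d)^{-\Delta_d}$ only by a factor $(1+\log B_d)^{\Delta_d}$, and this factor has to be absorbed. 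That absorption is the delicate step.
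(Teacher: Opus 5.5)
Your argument for $\eta_d\le B_d^{1/2}$ is the paper's argument. The paper splits at $\log\eta_d\le(1+\log B_d)/2$ and expands $(1+\log B_d-\log\eta_d)^{-\Delta_d}$ in the same way. Your reduction in the complementary range is also correct. There $1+\log\eta_d\asymp 1+\log B_d$, so the error term has size $X^{a_d}(1+\log B_d)^{1-\Delta_d-\delta}$. This dominates both the main term and the crude bound $N_d(X)\ll X^{a_d}$ (with $N_d(X)=0$ for $X<1$, $d\in\Ic$) as soon as $\Delta_d+\delta\le 1$.

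The case you flag, $\Delta_d+\delta>1$, is a genuine gap. No moved cutoff or absorption trick can close it, because the statement is false there. By \eqref{eq:affine-asymp} with $c_d>0$, there is a fixed $X_0\ge 1$ with $N_d(X_0)\ge 1$. Take $\eta_d=m\to\infty$ and $B_d=X_0m$. The left-hand side stays $\ge 1$. Since $1+\log\eta_d\le 1+\log B_d$, the right-hand side is $\ll X_0^{a_d}(1+\log B_d)^{1-\Delta_d-\delta}\to 0$. This case really occurs, for instance for the Serre-type input with $\Delta_i=3/2$ in the Picard rank $2$ examples.

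The paper's own proof does not get around this either. For $\log\eta_d>(1+\log B_d)/2$ and $\eta_d\le B_d$ it asserts $N_d(B_d/\eta_d)\ll (B_d/\eta_d)^{a_d}(1+\log B_d)^{-\Delta_d}$. But \eqref{eq:affine-asymp} only gives $X^{a_d}(1+\log X)^{-\Delta_d}$, and $(1+\log X)^{-\Delta_d}\ge (1+\log B_d)^{-\Delta_d}$ for $X\le B_d$. So the claimed inequality runs the wrong way, and it fails for bounded $X$ whenever $\Delta_d>0$.

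What you have proved is the true content of the lemma: the estimate holds whenever $\Delta_d+\delta\le 1$, hence for every $\Delta_d<1$ after shrinking $\delta$. For $\Delta_d\ge 1$ the only option is your fallback. Restrict to $\eta_d\le B_d^{1-\epsilon}$, where $1+\log X\gg_\epsilon 1+\log B_d$ and the estimate does hold, and handle larger $\eta_d$ separately downstream. That changes the statement rather than proving it.

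A minor point: for $d\notin\Ic$ and $X<1$, the zero vector is not ``negligible'', since the right-hand side tends to $0$ as $\eta_d/B_d\to\infty$. You need the nonvanishing convention $\vec x_d\neq 0$ there, as in the definition of $n_{\vec\eta}$.
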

\begin{proof}
If $\log \eta_d\le (1+ \log B_d)/2$, we may use~\eqref{eq:affine-asymp} together with
\begin{equation*}
	(1+\log B_d - \log \eta_d)^{-\Delta_d} = (1+\log B_d)^{-\Delta_d} +O ((\log \eta_d )(1+\log B_d)^{-\Delta_d-1}).
\end{equation*}
If $(1+ \log B_d)/2 < \log \eta_d$, note that the error term on the right-hand side of~\eqref{eq:N(B/eta)} dominates. If $\eta_d \le B_d$, then appealing to~\eqref{eq:affine-asymp} yields
\begin{equation*}
	N_d\!\left(\frac{B_d}{\eta_d}\right) \ll  \left(\frac{B_d}{\eta_d}\right)^{a_d} (1+\log B_d)^{-\Delta_d},
\end{equation*}
while $N_d(B_d/\eta_d) = 0$ as soon as $\eta_d>B_d$,
and in each case $N_d$ is thus sufficiently small.
\end{proof}
We arrive at
\begin{align*}
	\sum_{\vec b\leq \vec B} n_{\vec \eta}(\vec b) = 2^{-\rho} \prod_{d\in \Dc} c_d \left(\frac{B_d}{\eta_d}\right)^{a_d} (1+\log B_d)^{-\Delta_d}\left(1+O\!\left(\max_{d\in \Dc} \frac{1+\log \eta_d}{(1+\log B_d)^{\delta}}\right)\right)
\end{align*}
and have thus verified assumption~\eqref{mainassumption} with the parameters $s=|\Dc|$, $\alpha_d=a_d$, $\ell_d=-\Delta_d$,
\begin{equation*}
c_f= 2^{-\rho} \prod_{d\in \Dc} \frac{c_d}{\eta_d^{a_d}},
\end{equation*}
and $C_f = c_f\max_{d\in \Dc} (1+ \log \eta_d )$.
We deduce that
\begin{equation}\label{eq:applied-main-thm}
	N(\vec\eta, B)= c_{\cal P} 2^{-\rho} \prod_{d\in \Dc} \frac{c_d a_d^{\Delta_d}}{\eta_d^{a_d}} B (\log B)^{\rho-1-\Delta} \left(1+O\!\left( \max_{d\in \Dc}\frac{1+\log \eta_d}{(1+\log B)^{\vartheta}}\right)\right),
\end{equation}
where
\begin{equation*}
	\Delta = \sum_{d\in \Dc} \Delta_d
\end{equation*}
and $c_{\cal P}$ is as in \cref{def:cP} and $\vartheta >0$.
Indeed, that the maximal face is of dimension $\rho-1$ follows from either a direct argument or the fact that the special case in which $\Delta_i=0$ for all $i$ has to recover the classical case of Manin's conjecture.

Undoing the factorization from \cref{lem:mobius-factorization} to apply~\cite[Lem.~11.15~(e)]{MR1679841} (with $f=\rho\ge 2$), the series
\begin{equation*}
	\sum_{\vec\eta}c_{\Pc} 2^{-\rho} \mu'(\vec\eta)\prod_{d\in \Dc} \frac{c_da_d^{\Delta_d}}{\eta_d^{a_d}} B (\log B)^{\rho-1-\Delta}
\end{equation*}
converges absolutely to
\begin{equation*}
	c B (\log B)^{\rho-1-\Delta},
\end{equation*}
where the leading constant 
\begin{equation*}
	c= c_{\cal P} 2^{-\rho} \tau \prod_{d\in \Dc} c_d a_d^{\Delta_d}
\end{equation*}
contains the absolutely convergent product
\begin{equation*}
	\tau =
	\prod_p \sum_{\vec\eta}^{(p)}
	\frac{\mu'(\vec\eta)}{\prod_{d\in \Dc} \eta_d} = \prod_p \frac{|\mathfrak{Y}(\FF_p)|}{p^{n+\rho}} = \prod_p \left( 1-\frac{1}{p}\right)^{\rho}\frac{|\mathfrak{X}(\FF_p)|}{p^n}.
\end{equation*}
As usual, the sum should be interpreted to run over powers of $p$; the first equality follows from~\cite[(11.14)]{MR1679841}, and the second one from $|\mathfrak{Y}(\FF_p)| = (p-1)^\rho|\mathfrak{X}(\FF_p)|$, given that $\mathfrak{Y}$ is a $\mathbb{G}_{\mathrm{m}}^\rho$-torsor over $\mathfrak X$.
Once more applying Salberger's lemma \cite[Lem.~11.15~(e)]{MR1679841} (and $\max_{d\in \Dc} (1+\log \eta_d)\ll \prod_{d\in \Dc} \eta_d^{1/4}$) to sum the error term in~\eqref{eq:applied-main-thm}, we arrive at
\begin{equation*}
	N(B) = c B (\log B)^{\rho-1-\Delta} + O(B (\log B)^{\rho-1-\Delta-\delta}).
\end{equation*}
Finally, once more using that this formula has to specialize to the well-known case of Manin's conjecture for rational points on split toric varieties if all $\psi_d$ are identity morphisms, in which case $c_d=2^{a_d}$ and their product is $2^{n+\rho}$, the constant $2^{-\rho}c_{\cal P}$ needs to admit the shape 
\begin{equation*}
	2^{-n-\rho} \alpha(X) \omega_\infty,
\end{equation*}
where $\alpha(X)$ is Peyre's $\alpha$-constant and $\omega_\infty = \tau(X(\RR))$ is the real Tamagawa volume. This completes the proof of \cref{main_theorem_toric}.

\subsection{Examples}\label{section:examples} We conclude by analysing what input conditions make the vector $\vec \ell$ of convergent type for two families of toric varieties: those of Picard rank $2$ and blow-ups of projective space in skew linear subspaces.

\subsubsection*{Picard rank 2}
By a classical fact due to Kleinschmidt~\cite{MR954243}, every toric variety of Picard rank $2$ can be obtained as a projective bundle 
\begin{equation*}
	X = \PP\!\left(\cal O_{\PP^{a_0-1}}^{\oplus a_1} \oplus \cal O_{\PP^{a_0-1}}(d_2)^{\oplus a_2} \oplus \cdots \oplus \cal O_{\PP^{a_0-1}}(d_r)^{\oplus a_r}\right)
\end{equation*}
over a projective space $\PP^{a_0-1}$, where $r\ge 1$, $0=d_1<d_2<\dots<d_r$, and $a_0,\dots,a_r>0$. Write $R = \sum_{i\ge 1} a_i$ and $S= \sum_{i\ge 1}a_id_i$. This type of variety can thus be seen as a higher-dimensional analogue of a Hirzebruch surface.
From the description of its fan~\cite[§\,7.3]{MR2810322}, we may readily obtain its Picard group and the grading of its Cox ring.
The former is freely generated by the pullback $H$ of a hyperplane class and the tautological class $\xi$. The latter is generated by $a_0$ variables of degree $H$ and, for each $1\le i\le r$, a set of $a_i$ variables of degree $\xi-d_iH$. Its anticanonical class
\begin{equation*}
	-K_X = (a_0-S) H + R \xi
\end{equation*}
is ample if and only if $a_0>S$, which we shall assume from now on. Indeed, to check this we may use the description of the nef cone in Huang's work~\cite[Lem.~7.3]{MR4243654}, for instance, where the rays $\rho_{t}$ and $\rho_{n+1}$ in their notation correspond to the variables $x_{r,a_r}$ and $x_{0,a_0}$ in ours, so that $[D_{\rho_t}] = \xi - d_rH$ and $[D_{\rho_{n+1}}]=H$.

Then an anticanonical height function, factorized as in~\eqref{eq:package-height}, is given by
\begin{equation*}
	\max_{1\le i\le r} b_0^{a_0-S+Rd_i} b_i^R,
\end{equation*}
so that the maximal face is cut out by the $r$ equations 
\begin{equation*}
	(a_0-S+Rd_i)t_0 + Rt_i = 1.
\end{equation*}
We solve each of these for $t_i$. The maximal face is then described by the inequalities $0\le t_0\le 1/(a_0+S+Rd_r)$; it is thus the edge between the two vertices
\begin{equation*}
	\vec v_0 = \left(\frac{1}{a_0-S+Rd_r},\ v_{0,1},\ \dots,\ v_{0,r-1},\ 0 \right),
\end{equation*}
where the entry
\begin{equation*}
	v_{0,i} = R^{-1} \left(1-\frac{a_0-S+Rd_i}{a_0-S+Rd_r}\right)
\end{equation*}
is non-zero for $1\le i\le r-1$ and vanishes for $i=r$, and
\begin{equation*}
	\vec v_1 = \left(0,\ R^{-1},\ \dots,\ R^{-1}\right),
\end{equation*}
where all entries except the first one are non-zero.
\cref{prop:convergence-test} with this ordering of the vertices introduces the requirement that $\Delta_r<1$, while applying it to the reverse ordering yields the condition $\Delta_0<1$. There are no restrictions on the remaining $\Delta_i$ (with $1\le i\le r-1$).

In particular, taking $r\ge 3$ and $a_i=3$ for some $1\le i\le r$ makes it possible to use the classical equation $x_{i,1}y_{i,1}^2+x_{i,3}y_{i,3}^2+x_{i,3}y_{i,3}^2=0$ in Serre's problem that was solved by Loughran, Rome, and Sofos~\cite{loughran2024leadingconstantrationalpoints}. In this case, $\Delta_i=3/2$, with a contribution of $1/2$ coming from each of the fibers above $x_{i,1}=0$, $x_{i,2}=0$, and $x_{i,3}=0$. The corresponding divosors on $X$ all have identical class, thus leaving the Loughran--Rome--Sofos setting (that $k[X\setminus D]^\times =k^\times$ is equivalent to saying that all components of $D$ are linearly independent in the Picard group). Still, our asymptotic formula conforms to their order of magnitude as long as $\Delta_0$ and $\Delta_r$ are smaller than $1$.

\subsubsection*{Blow-ups of skew linear subspaces}
A straightforward way to obtain examples of higher Picard rank is via blow-ups. Call a set $Z_1,\dots, Z_r$ of torus-invariant linear subspaces of projective space $\PP^{n-1}$ \emph{skew} if no two of them are contained in the same torus-invariant divisor. Up to permutations of the coordinate variables, such a configuration is determined by fixing
$r\ge 1$, $a_0\ge 1$, and $a_1,\dots,a_r\ge 2$, where $n=a_0+\dots+a_r$, and $Z_i$ is of codimension $a_i$. 
The blow-up $X$ of $\PP^{n-1}$ in such subspaces is again toric.
Its Picard group is freely generated by the pullback $D_0=H$ of a hyperplane class and the exceptional classes $E_1,\dots, E_r$.
Its Cox ring is generated by variables $x_{i,1},\dots,x_{i,a_i}$ of degree $H$ (if $i=0$) and $H-E_i$ (if $i\ne 0$), respectively, together with variables $y_1,\dots,y_r$, where $y_i$ is of degree $E_i$.
These coordinates correspond to strict transforms of divisors containing none of the centres and of divisors containing precisely the centre $Z_i$, and to the exceptional divisors, respectively.
In particular, $\Dc = \{H,H-E_1,\dots, H-E_r, E_1,\dots, E_r\}$.
As every fibration involving only a $y_i$ for $1\le i\le r$ is trivial, we may restrict ourselves to families parametrized by the former variables, that is, $\Ic = \{H,H-E_1,\dots, H-E_r\}$ or a subset thereof.
Thus, let $\psi_0,\dots,\psi_r$ be fibrations in the first sets of variables, with corresponding logarithmic savings $\Delta_0,\dots,\Delta_r\ge 0$ in their asymptotic formulas~\eqref{eq:affine-asymp}.

The anticanonical class is
\begin{equation*}
	-K_X=nH-\sum_{i=1}^r (a_i-1)E_i,
\end{equation*}
and its corresponding height function in the factorization~\eqref{eq:package-height} is 
\begin{equation*}
	H(\vec b) = \max\Bigg(
		\Big\{ b_0^{a_0+r} \prod_{1\le i\le r} b_i^{a_i-1} \Big\}
		\cup
		\Big\{ 
			b_j^{a_0+a_j+r-1}b_j'^{a_j+r} \prod_{\substack{1\le i\le r\\i\ne j}} b_i^{a_i-1} : 1\le j\le r
		\Big\}
	\Bigg),
\end{equation*}
where $b_i$ corresponds to the variables $\{x_{i,1},\dots,x_{i,a_i}\}$ for $0\le i\le r$ and $b_i'$ corresponds to $y_i$ for $1\le i\le r$.

In particular, the maximal face is of codimension $r+1$ in $\RR_{\ge 0}^{2r+1}$ (with variables $t_0,\dots,t_r,t_1',\dots,t_r'$) and cut out by the $r+1$ linearly independent relations
\begin{align*}
	(a_0+r)t_0 + \sum_{1\le i\le r} (a_i-1)t_i  &= 1  \text { and}\\
	(a_j+r)t_j' + (a_0+a_j+r-1)t_j + \sum_{\substack{1\le i\le r\\ i\ne j}} (a_i-1)t_i  &=1 \text{ for } 1\le j\le r.
\end{align*}
We may solve for $t_0$ and $t_1',\dots,t_r'$, respectively, to turn these equalities into inequalities for the independent variables $t_1,\dots,t_r$. The first of these ($\sum(a_i-1)t_i\le 1$) is implied by the others. As there are no conditions and no exponents $\ell_i'$ associated with the last variables, we need only care about the first $r+1$ coordinates of the vertices of the maximal face and readily determine them to be 
\begin{equation*}
	\vec x_0 = \left(\frac{1}{a_0+r},\ 0,\ \dots,\  0\right),\ 
\end{equation*}
together with
\begin{equation*}
	\vec x_1 = \left(*, \frac{1}{a_0+a_1+r-1}, 0, \dots, 0\right),\ \dots,\ \vec x_r = \left(*, 0,\dots, 0, \frac{1}{a_0+a_r+r-1}\right),
\end{equation*}
where the omitted first entry
\begin{equation*}
	\frac{1}{a_0+r}\left(1- \frac{a_i-1}{a_0+a_i+r-1}\right)
\end{equation*}
is non-zero in each case.

We proceed by applying \cref{prop:convergence-test}.
Given an ordering $\vec v(0) = \vec x_0$ and $\vec v(1) = \vec x_j$ with $1\le j\le r$, the constraint for $i=1$ is exactly $\Delta_j < 1$. Since no vertex is zero in the first coordinate, $\Delta_0$ does not appear in any constraint. Given any ordering $\vec v \colon \{0,\ldots,r\} \to \cal E$, at most $i$ choices of $j$ satisfy $s(\vec v)_{j} \leq i$. Hence, the $i$-th condition is already implied by $\Delta_{j'} < 1$ for $1 \leq j' \leq r$.

The total necessary and sufficient condition is thus that $\Delta_i<1$ for $i\ne 0$, while $\Delta_0$ may be arbitrary.

\bibliographystyle{plain}
\bibliography{references.bib}

@article{BloBru18,
	author = {Blomer, V. and Br{\"u}dern, J.},
	title = {Counting in hyperbolic spikes: {The} {Diophantine} analysis of multihomogeneous diagonal equations},
	fjournal = {Journal f{\"u}r die Reine und Angewandte Mathematik},
	journal = {J. Reine Angew. Math.},
	issn = {0075-4102},
	volume = {737},
	pages = {255--300},
	year = {2018},
	language = {English},
}

@article{PieSch24,
	author = {Pieropan, M. and Schindler, D.},
	title = {Hyperbola method on toric varieties},
	fjournal = {Journal de l'{\'E}cole Polytechnique -- Math{\'e}matiques},
	journal = {J. {\'E}c. Polytech., Math.},
	issn = {2429-7100},
	volume = {11},
	pages = {107--157},
	year = {2024},
	language = {English},
}

@article{BCFJK16,
	author = {Bhargava, M. and Cremona, J. E. and Fisher, T. and Jones, N. G. and Keating, J. P.},
	title = {What is the probability that a random integral quadratic form in {$n$} variables has an integral zero?},
	journal = {Int. Math. Res. Not. IMRN},
	fjournal = {International Mathematics Research Notices. IMRN},
	year = {2016},
	number = {12},
	pages = {3828--3848},
}

@unpublished{CKR25,
	title = {Serre's problem for multiple conics},
	author = {S. Chan and P. Koymans and N. Rome},
	year = {2025},
	note = {arXiv preprint (2504.21792)},
}

@unpublished{DLS25,
	title = {Local solubility in generalised {C}h\^atelet varieties},
	author = {K. Destagnol and J. Lyczak and E. Sofos},
	year = {2025},
	note = {arXiv preprint (2504.11388)},
}

@unpublished{GGSW26,
	title = {On indefinite integral ternary quadratic forms},
	author = {A. Gamburd and A. Ghosh and P. Sarnak and J. P. Whang},
	year = {2026},
	note = {arXiv preprint (2603.05849)},
}

@unpublished{Wilson,
	title = {Asymptotics for local solubility of diagonal quadrics over a split quadric surface},
	author = {C. Wilson},
	year = {2024},
	note = {arXiv preprint (2404.11489)},
}

@unpublished{daSilva,
	title = {Solubility of a family of conics with polynomial coefficients in
	         many variables},
	author = {M. Da Silva},
	year = {2025},
	note = {arXiv preprint (2511.20282v2)},
}

@article{Guo,
	author = {Guo, C. R.},
	title = {On solvability of ternary quadratic forms},
	fjournal = {Proceedings of the London Mathematical Society. Third Series},
	journal = {Proc. Lond. Math. Soc. (3)},
	issn = {0024-6115},
	volume = {70},
	number = {2},
	pages = {241--263},
	year = {1995},
	language = {English},
}

@article{Hooley93,
	author = {Hooley, C.},
	title = {On ternary quadratic forms that represent zero},
	fjournal = {Glasgow Mathematical Journal},
	journal = {Glasg. Math. J.},
	issn = {0017-0895},
	volume = {35},
	number = {1},
	pages = {13--23},
	year = {1993},
	language = {English},
}

@article{FHP21,
	author = {Fisher, T. and Ho, W. and Park, J.},
	title = {Everywhere local solubility for hypersurfaces in products of projective spaces},
	fjournal = {Research in Number Theory},
	journal = {Res. Number Theory},
	issn = {2522-0160},
	volume = {7},
	number = {1},
	pages = {27},
	note = {Id/No 6},
	year = {2021},
	language = {English},
}

@unpublished{Schaefer,
	title = {Sums with hyperbolic conditions},
	author = {L. Sch\"{a}fer},
	year = {2025},
	note = {PhD thesis, G\"{o}ttingen University},
}

@unpublished{KPSS25,
	title = {Local solubility of generalised {F}ermat equations},
	author = {P. Koymans and R. Paterson and T. Santens and A. Shute},
	year = {2025},
	note = {arXiv preprint (2501.17619)},
}

@article{LS16,
	author = {Loughran, D. and Smeets, A.},
	title = {Fibrations with few rational points},
	journal = {Geom. Funct. Anal.},
	fjournal = {Geometric and Functional Analysis},
	volume = {26},
	year = {2016},
	number = {5},
	pages = {1449--1482},
}

@article{Lou18,
	author = {Loughran, D.},
	title = {The number of varieties in a family which contain a rational point},
	journal = {J. Eur. Math. Soc. (JEMS)},
	fjournal = {Journal of the European Mathematical Society (JEMS)},
	volume = {20},
	year = {2018},
	number = {10},
	pages = {2539--2588},
}

@unpublished{loughran2024leadingconstantrationalpoints,
	title = {The leading constant for rational points in families},
	author = {D. Loughran and N. Rome and E. Sofos},
	year = {2024},
	note = {arXiv preprint (2210.13559)},
}

@article{MS22,
	author = {Mitankin, V. and Salgado, C.},
	title = {Rational points on del {P}ezzo surfaces of degree four},
	journal = {Int. J. Number Theory},
	fjournal = {International Journal of Number Theory},
	volume = {18},
	year = {2022},
	number = {9},
	pages = {2099--2127},
}

@article{MR1679841,
	author = {Salberger, P.},
	title = {Tamagawa measures on universal torsors and points of bounded height on {F}ano varieties},
	note = {Nombre et r\'epartition de points de hauteur born\'ee (Paris, 1996)},
	journal = {Ast\'erisque},
	fjournal = {Ast\'erisque},
	volume = {251},
	year = {1998},
	pages = {91--258},
	issn = {0303-1179},
	mrclass = {11G50 (11G35 14G05 14G25 14J45)},
	mrnumber = {1679841},
	mrreviewer = {David Harari},
}

@article{Sof16,
	author = {Sofos, E.},
	title = {Serre's problem on the density of isotropic fibres in conic bundles},
	journal = {Proc. Lond. Math. Soc. (3)},
	fjournal = {Proceedings of the London Mathematical Society. Third Series},
	volume = {113},
	year = {2016},
	number = {2},
	pages = {261--288},
}

@article{MR4243654,
	author = {Huang, Z.},
	title = {Rational approximations on toric varieties},
	journal = {Algebra Number Theory},
	fjournal = {Algebra \& Number Theory},
	volume = {15},
	year = {2021},
	number = {2},
	pages = {461--512},
	issn = {1937-0652,1944-7833},
	mrclass = {14G05 (11J99 14M25)},
	mrnumber = {4243654},
	doi = {10.2140/ant.2021.15.461},
	url = {https://doi.org/10.2140/ant.2021.15.461},
}

@book{MR2810322,
	author = {Cox, D. A. and Little, J. B. and Schenck, H. K.},
	title = {Toric varieties},
	series = {Graduate Studies in Mathematics},
	volume = {124},
	publisher = {American Mathematical Society, Providence, RI},
	year = {2011},
	pages = {xxiv+841},
	isbn = {978-0-8218-4819-7},
	mrclass = {14M25 (05A15 05E45 52B12)},
	mrnumber = {2810322},
	mrreviewer = {Ivan\ Arzhantsev},
	doi = {10.1090/gsm/124},
	url = {https://doi.org/10.1090/gsm/124},
}

@article{MR954243,
	author = {Kleinschmidt, P.},
	title = {A classification of toric varieties with few generators},
	journal = {Aequationes Math.},
	fjournal = {Aequationes Mathematicae},
	volume = {35},
	year = {1988},
	number = {2-3},
	pages = {254--266},
	issn = {0001-9054,1420-8903},
	mrclass = {14L32 (14J40 52A25)},
	mrnumber = {954243},
	mrreviewer = {G.\ Ewald},
	doi = {10.1007/BF01830946},
	url = {https://doi.org/10.1007/BF01830946},
}

@article{MR4959852,
	author = {Pieropan, M. and Schindler, D.},
	title = {Points of bounded height on certain subvarieties of toric varieties},
	journal = {Algebra Number Theory},
	fjournal = {Algebra \& Number Theory},
	volume = {19},
	year = {2025},
	number = {11},
	pages = {2281--2306},
	issn = {1937-0652,1944-7833},
	mrclass = {11G50 (11A25 11D45 11P21 14G05 14M25)},
	mrnumber = {4959852},
	doi = {10.2140/ant.2025.19.2281},
	url = {https://doi.org/10.2140/ant.2025.19.2281},
}

\end{document}